\newcommand{\sredm}[1]{\ifmmode\text{\R^dout{\ensuremath{\displaystyle \textcolor{red}{#1}}}}\else\sout{\textcolor{red}{#1}}\fi}
\numberwithin{equation}{section}
\newtheorem{theorem}{Theorem}[section]
\newtheorem{lemma}{Lemma}[section]
\newtheorem{assumption}{Assumption}[section]
\newtheorem{proposition}{Proposition}[section]
\newtheorem{example}{Example}[section]
\newtheorem{definition}{Definition}[section]
\newtheorem{remark}{Remark}[section]
\DeclareMathOperator*{\argmax}{arg\,max}
\newcommand{\G}{{\mathbb G}}
\newcommand{\X}{{\mathbb X}}
\newcommand{\EE}{{\mathbb E}}
\newcommand{\FF}{{\mathbb F}}
\newcommand{\PP}{{\mathbb P}}\newcommand{\E}{\mathbb{E}}
\newcommand{\R}{\mathbb{R}}
\newcommand{\N}{\mathbb{N}}
\newcommand{\calA}{{\mathcal A}}
\newcommand{\calB}{{\mathcal B}}
\newcommand{\calC}{{\mathcal C}}
\newcommand{\calF}{{\mathcal F}}
\newcommand{\calG}{{\mathcal G}}
\newcommand{\calI}{{\mathcal I}}
\newcommand{\calK}{{\mathcal K}}
\newcommand{\calL}{{\mathcal L}}
\newcommand{\calM}{{\mathcal M}}
\newcommand{\calP}{{\mathcal P}}
\newcommand{\calY}{{\mathcal Y}}
\newcommand{\la}{\lambda}
\newcommand{\sig}{\sigma}
\newcommand{\eps}{\varepsilon}
\newcommand{\al}{\alpha}
\newcommand{\gam}{\gamma}
\newcommand{\skp}{\vspace{\baselineskip}}
\newcommand\iy{\infty}
\newcommand{\one}{\mathbbm{1}}
\newcommand{\Fis}{F_{i,s}}
\newcommand{\Fs}{F_{s}}
\newcommand{\Ft}{F_{t}}
\newcommand{\Gis}{G_{i,s}}
\newcommand{\Gs}{G_{s}}
\newcommand{\Gt}{G_{t}}
\newcommand{\noi}{\noindent}
\newcommand{\lbrs}[1]{\llbracket #1 \rrbracket}
\newcommand{\red}{\textcolor{black}}
\newcommand{\Vfour}{\bar{V}}
\title[Existence of Optimal Stationary Singular Controls and MFG Equilibria]{Existence of Optimal Stationary Singular Controls and Mean Field Game Equilibria}\thanks{* This is the final version of the paper. To appear in {\it Mathematics of Operations Reearch.}}
\author[A. Cohen]{Asaf Cohen }
\address{Department of Mathematics\\
University of Michigan\\
Ann Arbor, MI 48109\\
United States
}
\email{shloshim@gmail.com }
\author[C. Sun]{Chuhao Sun}
\address{Department of Mathematics\\
University of Michigan\\
Ann Arbor, MI 48109\\
United States
}
\email{chuhaos@umich.edu}
\date{\today}
\begin{document}

\maketitle

\begin{abstract}
In this paper, we examine the stationary relaxed singular control problem within a multi-dimensional framework for a single agent, as well as its mean field game equivalent. We demonstrate that optimal relaxed controls exist for \red{two problem classes: one driven by queueing control and the other by harvesting models}. These relaxed controls are defined by random measures across the state and control spaces, with the state process described as a solution to the associated martingale problem. By leveraging findings from [Kurtz and Stockbridge, {\it Electron.~J.~Probab.}, 2001]
, we establish the equivalence between the martingale problem and the stationary forward equation. This allows us to reformulate the relaxed control problem into a linear programming problem within the measure space. We prove the sequential compactness of these measures, thereby confirming the feasibility of achieving an optimal solution. Subsequently, our focus shifts to mean field games. Drawing on insights from the single-agent problem and employing Kakutani--Glicksberg--Fan fixed point theorem, we derive the existence of a mean field game equilibria.


\skp
\noi{\bf Keywords:} Singular control, stationary control,  mean field games, Brownian control problems.

\noi{\bf AMS subject classification:} 
%
%
93E20,  	
60G10 
91A16 
49N80 
%
%
%
%

\end{abstract}

\section{Introduction}
In this paper, we establish the existence of optimal controls for stationary singular single-agent control problems, as well as the existence of mean field game (MFG) equilibria for stationary singular MFGs.

\subsection{The single-agent control problem} 

We consider relaxed singular control problems involving stationary controls. We address two distinct problems in our study. Firstly, we consider \red{\it{queueing-inspired problem}}, which is traditionally formulated as a cost-minimizing problem where the underlying state process is not constrained, although it may possess a reflection boundary. Such a model finds relevance in queueing problems, particularly under heavy traffic conditions. Secondly, we explore a \red{\it{harvesting-inspired problem}}, which is traditionally formulated as a reward-maximization problem. Here, the underlying state process—representing the population, for instance—is confined to the non-negative orthant. In the queueing-inspired problem, we assume a Lyapunov condition for the uncontrolled process, ensuring stability. In the harvesting-inspired problem, we avoid such an assumption, yet we obtain stability by the near-monotone property of the reward function. Our work culminates in establishing the existence of optimal relaxed stationary singular controls for both problem formulations.

\subsubsection{Model outline, key results, and proof techniques} Initially, for the underlying dynamics for both models, we leverage the characterization provided by Kurtz and Stockbridge \cite{kur-sto}, which establishes a link between the relaxed singular control and two measures: the stationary distribution of the controlled process, and the occupation (joint) measure of both the controlled process and the singular control. While \cite{kur-sto} addresses the characterization for one-dimensional singular controlled diffusion, we extend this insight to the multi-dimensional domain. Afterward, we leverage this characterization to formulate an equivalent linear programming problem over a space of measures. Ultimately, we utilize the linear programming framework to establish the existence of a solution to a specific martingale problem, which is the recovery of the relaxed singular control. 
We will now elaborate on the model, present the key results, and outline our proof techniques for the single-agent problem in more detail. 

For the sake of clarity, we begin by presenting the optimal control problem in its strict formulation. Consider the $d$-dimensional controlled state process:
\begin{align}\label{eq:strict}
X_t=X_0+\int_0^t\beta(X_s)ds+\int_0^t\sigma(X_s)d W_s+ \int_0^t \phi(X_s)d\Fs-\int_0^t\gamma(X_s)d\Gs.
\end{align}
The noise in the system is driven by the $d_1$-dimensional Wiener process $W$. The multidimensional processes $F$ and $G$ are adapted to the underlying filtration, with increments in nonnegative orthants (possibly, with different dimensions), denoted by $\FF=\R_+^{d_2}$ and $\G=\R_+^{d_3}$, respectively. These increments could be singular with respect to the Lebesgue measure. The process $F$ is termed the {\it intrinsic singular component} and is constituting an inherent component of the system that remains uncontrolled. On the other hand, the process $G$ represents the {\it control} selected by the agent. Both of these processes are constrained to exhibit stationary increments with respect to a stationary distribution for $X$, denoted by $\nu^G$ in order to emphasize the dependence on the chosen control $G$. An illustrative instance of an intrinsic component is the reflection of the process on a boundary of a given domain (e.g., nonnegative orthant or a box). The objective of the agent is to optimize the expected cost or reward, computed under the initial stationary distribution:
\begin{align*}
\EE_{\nu^G}\Big[\int_0^1\ell(X_t)dt+\int_0^1f(X_t)\cdot dF_t+\int_0^1g(X_t)\cdot d\Gt\Big].
\end{align*}
Given our focus on stationary controls, we restrict our analysis to the time interval $[0,1]$ without loss of generality, \red{since starting from the stationary distribution, the system's behavior becomes independent of the initial time. This simplification allows us to consider a representative time interval without affecting the results.}

Having presented the dynamics in a straightforward manner using strict controls, we are now ready to proceed with the relaxed formulation. In this case both singular components (intrinsic and control) are defined via random measures, which on high level provide randomization for the singular components. Specifically, {\it relaxed controls} are defined using the collection of random measures $\Gamma=\{\Gamma_i\}_{i=d_2+1}^{d_2+d_3}$ over $\X\times\G\times[0,1]$, 
where: $\X\subseteq\R^d$ is the state space for the state process $X$, $\G=\R_+^{d_3}$ is a nonnegative orthant, standing for the set of possible increments for the control $G$, and $[0,1]$ is the time interval. Similarly, {\it relaxed intrinsic singular components} $\Phi=\{\Phi_i\}_{i=1}^{d_2}$ are random measures over $\X\times\FF\times[0,1]$. We  require that under $\Gamma$, the state process $X$ has a stationary distribution (to be denoted by) $\nu^\Gamma$, where the state process is defined via a martingale problem in such a way that the following process is a martingale
\begin{align*}
\calM^{\Gamma, h}_t:=h(X_t)-\int_0^t(\calA h)(X_s)ds-\sum_{i=1}^{d_2}\int_{\X\times\FF\times[0,t]}(\calB_i h)(x,z)\Phi_i(dx,dz,ds)\\-\sum_{i=d_2+1}^{d_2+d_3}\int_{\X\times\G\times[0,t]}(\calB_i h)(x,y)\Gamma_i(dx,dy,ds),
\end{align*}
for any test function $h\in\calC^2_b(\X)$, the set of twice continuously differentiable functions whose first derivative vanishes outside a compact domain. 
For $h\in\calC^2_b({\X})$ we let $\calA h:{\X}\to\R$ and for $1\le i\le d_2$, $\calB_i h:{\X}\times\FF\to\R$, while for $d_2+1\le i\le d_2+d_3$, $\calB_i h:{\X}\times\G\to\R$,  be defined as:
\begin{align}\notag
&(\calA h)(x):=\frac{1}{2}\sum_{i,j=1}^da_{i,j}(x)\frac{\partial^2 h}{\partial x_i\partial x_j}(x)+\sum_{i=1}^d \beta_i(x)\frac{\partial h}{\partial x_i}(x),
\\\notag
&(\calB_i h)(x,z):= \frac{h(x+ \phi(x)z)-h(x)}{z_i}, \qquad\text{if } \;  z_i>0, \qquad\text{for }\; 1\le i\le d_2,
\\\notag
&(\calB_i h)(x,y):= \frac{h(x- \gamma(x)y)-h(x)}{y_i}, \qquad \text{if } \; y_i>0, \qquad\text{for }\; d_2+1\le i\le d_2+d_3.
\end{align}
where 
$a(x):=\sigma(x)\sigma^T(x)$. 
We use the standard extension $(\calB_i h)(x,0):=-\gamma^T_i(x)\nabla h(x)$ or $\phi^T_i(x)\nabla h(x)$, where $\gamma^T_i(x)$ is the $i$-th row of $\gamma^T(x)$, and similarly for $\phi^T_i(x)$.

We show in Section \ref{sub32:min} (Lemmas \ref{lem:exo} and \ref{lem:exi}) that the controlled process can be characterized using the operators, 
i.e., there exist finite Borel measures on $\X\times\FF$, denoted by $\lambda^\Gamma= \{\lambda^\Gamma_i\}_{i=1}^{d_2}$, and on ${\X}\times\G$, denoted by $\mu^\Gamma= \{\mu^\Gamma_i\}_{i=d_2+1}^{d_2+d_3}$, such that for all $h\in\calC^2_b({\X})$,  
\begin{align}\label{mar_prm}
\int_{{\X}}\calA h(x)\nu^\Gamma(dx)+\sum_{i=1}^{d_2}\int_{{\X}\times\FF}(\calB_i h)(x,z)\lambda^\Gamma_i(dx,dz)+\sum_{i=d_2+1}^{d_2+d_3}\int_{{\X}\times\G}(\calB_i h)(x,y)\mu^\Gamma_i(dx,dy)=0.
\end{align}
Each of the {\it occupation measures} in the families $\{\lambda_i\}_{i=1}^{d_2}$ and $\{\mu_i\}_{i=d_2+1}^{d_2+d_3}$ are the expectation (with respect to $\nu^\Gamma$) of the corresponding random measures in $\Phi=\{\Phi_i\}_i$ and $\Gamma=\{\Gamma_i\}_i$ over the time interval $[0,1]$.  
Our aim is then to solve the {\it linear programming problem}
\begin{align*}
&\text{optimize}\qquad J(\Gamma):=\int_{\X}\ell(x)\nu^\Gamma(dx)+\int_{\X\times\FF}f(x)\cdot\la^\Gamma(dx,dz)+\int_{\X\times\G}g(x)\cdot \mu^\Gamma(dx,dy)\\
&\text{such that}\qquad (\nu^\Gamma,\mu^\Gamma,\la^\Gamma) \;\text{satisfies}\; \eqref{mar_prm}.
    \end{align*}
We argue in Remark \ref{rem:mu:la} that, without loss of generality, we may merge the singular control $\Gamma$ with the singular component $\Phi$ and as a result merge $\mu$ and $\lambda$ as well; hence ignoring $\lambda$ for the rest of this subsection.
The analysis of identifying optimal relaxed stationary singular controls and the MFG equilibria hinges on this linear programming representation.

With the linear programming representation established, we demonstrate in Section \ref{sub33:min} that the optimum is finite\footnote{\red{While every minimization problem can be reformulated as a maximization problem by flipping the signs, and vice versa, we retain the terminologies `maximization' and `minimization' to reflect the distinct contexts in which these terms are commonly used.}} \red{for the queueing-inspired (minimization) problem, and in Section \ref{sub33:max} for the harvesting-inspired (maximization) problem}, as outlined in Lemma \ref{ass:near:min} and Lemma \ref{ass:near} respectively. Subsequently, the crux of our technical endeavor unfolds in Section \ref{sec:34:min} and Section \ref{sec:34:max} \red{for the two cases respectively}, where we establish the sequential compactness of the set of measures $\{\nu^\Gamma\}_\Gamma$ and $\{\mu^\Gamma\}_\Gamma$, satisfying \eqref{mar_prm} (with $\lambda^\Gamma$ being ignored). This pivotal result, outlined in Proposition \ref{measure:compact:min} and Proposition \ref{measure:compact:max}, is attained by demonstrating the tightness of the measures. The strategies employed diverge between the two cases. In the queueing-inspired case, we employ the technique of one-point compactification. Namely, we expand the measures to compactified spaces straightforwardly and, by going to a subsequence if necessary, we pick the limit point. The subsequent task is to demonstrate that the limiting measures maintain support on the original spaces, rendering the original collection of measures sequentially compact. For this, we take advantage of the near-monotone structure of the cost functions $\ell$, $g$, and $f$, imposed in this case, adopting methodologies akin to those presented in \cite{Borkar}. 
However, in the maximization case, where rewards are assumed to be bounded, this approach proves inadequate, necessitating the utilization of uniform probabilistic boundedness properties of the process. Leveraging Lyapunov functions, we establish the uniform boundedness in expectation of the control $\Gamma$. Finally, in the two cases, the sequential compactness result implies Theorem \ref{thm:main:min} and \red{Theorem \ref{thm:main:max}}, which are the main theorems in the single-agent problem, asserting the attainability of the optimum.

\subsubsection{Literature review} We will now review some literature on 
singular control problems and optimal control problems with stationary controls. In essence, singular controls are characterized by their singularity with respect to the Lebesgue measure, occasionally leading to discontinuities in the dynamics of the process. These controls find diverse applications, such as in mathematical finance, where they represent consumption, investment, and dividend policies (e.g., \cite{davis1990portfolio, reppen2018singular}), in biology, where they model population dynamics alongside seeding and harvesting strategies (e.g., \cite{alv-hen2019, HNUK19, hen-qua2020, cohen2021optimal}), and in manufacturing and queueing networks, where singular controlled processes approximate systems under heavy traffic by representing customer rejections and idle periods (e.g., \cite{Atar-Budh-Will-07, bud-gho-chi2011, coh2019a, ata2023singular}).

In the domain of stationary control problems, several notable contributions exist. For instance, in \cite{klein69}, the author delves into stationary control problems within linear systems, considering scenarios where control-dependent noise is a factor. Building upon this foundation, \cite{Hau71} extends the investigation to models featuring both state and control-dependent noise. Furthermore, in \cite{e2018class}, optimality conditions are provided for a specific class of stationary singular control problems within a linear diffusion framework. Additionally, \cite{RL22} explores the realm of optimal stationary control through the lens of reinforcement learning methodologies.

Stationary control problems are often associated with infinite horizon control problems featuring ergodic costs/rewards, where the optimization criterion takes the form of a ``limit of long-time average". In ergodic control problems, admissible controls are typically restricted to stationary controls, and the time average often coincides with the space average according to the stationary distribution, regardless of the initial state of the system. For problems with ergodic criteria and singular control in biology see e.g., \cite{alv-hen2019, lia-zer2020, cohen2021optimal} and in queueing systems see, e.g., \cite{Pang15,Pang16,biswas2017,Pang18,Pang19} and the references therein. \red{In order to establish a connection between ergodic costs and stationary control problems, one typically requires additional structural characteristics in the problem that lead to continuity features. Namely, that the admissible controls, or the candidates for optimal controls make the controlled process satisfy \cite[Lemma 4.9]{kha2}, which says that the transition kernel is uniformly continuous with respect to the initial value. This is not necessarily true in general in our setting, as singular controls can cause jumps. However, we refrain from imposing further structure on the problem and defer such analysis to future research endeavors.}

Discounted control problems, on the other hand, are also associated with stationary controls but are distinct from ergodic control problems as they heavily depend on the initial state of the system. Despite the system eventually converging to its stationary distribution as time approaches infinity, the linear-programming structure is no longer valid due to the initial state's significant impact on the cost/reward.

As mentioned above, our formulation of the problem and its analysis, relies on the characterization of the stationary singular control by two measures, which leads to a linear programming problem. This link dates back to Echeverr\'ia \cite{echeveria} who gave a criterion for the stationary distribution of Markov process. Kurtz and Stockbridge \cite{kurtz98} extended this characterization for the controlled martingale problem and thus established the equivalence between the control problem and a linear programming problem over a space of measures \cite{kurtz2017linear} in a one-dimensional setup. Budhiraja \cite{Budh-03} established a similar characterization in cases where the state dynamics are constrained to solve an optimal ergodic regular control problem. In his study, the state constraint was employed to derive stability results. Alongside the characterization, which establishes a connection between the control and the stationary distribution of the state process, he demonstrated optimality through the weak limit of measures. Kang and Ramanan  \cite{weining} characterized the stationary distribution when the underlying dynamics are reflected diffusions. 

\subsubsection{Contextualizing our single-agent problem contribution} 
We now position our contribution in terms of modeling, results, and techniques within the broader context of existing research on single-agent singular control problems. Our setting captures two distinct classes of multi-dimensional models: one from from queueing theory (Example \ref{rem:rbm}) formulated with cost minimization, and another mathematical biology, specifically the harvesting model in population dynamics (Example \ref{lok-vol}) focused on reward maximization. These two classes of models have features that fail to satisfy the assumptions in some classical papers. Namely, one of the classes of the queueing control models that we cover does not assert a penalty for idleness. This condition translates to a $0$ cost associated with a certain singular component. Namely, the singular cost function is not necessarily strictly positive, and specifically, not bounded away from $0$, as required by some of the papers considering singular control problems. On the other hand, the harvesting model considers population dynamics, which by its nature has a quadratic drift coefficient; this drift is neither bounded nor Lipschitz, and such conditions are often seen in papers that do not specifically deal with biology models. Our setting, however, is capable of incorporating these two models into the framework.

As mentioned earlier, following the characterization result from \cite{kur-sto}, Kurtz and Stockbridge \cite{kurtz2017linear} further study a stationary singular control problem via the linear programming formulation and prove the existence of optimal singular controls. 
However, they only studied the cost minimization problem, not the reward maximization, which is not a trivial reversal. The relaxed singular controls in their paper, as of \cite{kur-sto}, are only one-dimensional, while we allow multiple dimensions and could incorporate more general applications, for example, see Remark \ref{Up-down} for an example of controls in both directions. In addition, they do not show that the optimal cost is finite while we do, and further require the singular cost or the budget constraint to be bounded away from $0$. We allow the singular cost to be equal to $0$ (possibly within a compact set), which allows us to cover the queueing models from Example \ref{rem:rbm} mentioned in the previous paragraph. 

In \cite{kruk1,kruk2}, Kruk considers both discounted and ergodic problems, when the state process is limited to a Brownian motion with reflections, and shows the smoothness of the value function, as well as the existence and regularity of the optimal reflected domain, via geometric approach. 

In a sequence of three papers Haussmann and Suo consider finite horizon singular control problems, using tools such as: the martingale problem and compactification arguments in \cite{Haussmann2}, the dynamic programming principle to characterize the value function as a viscosity solution of the HJB equation in \cite{Haussmann2}, and a time change method, to transform the singular control problem into a new problem that only includes regular controls, under the Roxin conditions \cite{Haussmann1}. 
In the latter, the singular control takes a specific form, that in each dimension of the dynamic, $i\in\lbrs{d}$, it has two opposite directions: plus, $+\nu^1_i$, and minus, $-\nu^2_i$. Our setting is more general and covers this example.  

It is worth noting that our linear programming approach has its roots in the martingale problem, which is considered in \cite{Haussmann2}; yet, our methodology and compactness analysis are different since these three papers are for finite horizon problems. Moreover, these papers require the drift and diffusion coefficients of the state dynamic to be bounded and continuous, while we do not impose such constraints; specifically, the unboundedness is crucial, as in the case of the harvesting model in population dynamics considered in Example \ref{lok-vol}, both the drift and the diffusion are unbounded, and in the Ornstein--Uhlenbeck process Example \ref{OU-pro}, the drift is unbounded. 

A different time rescaling method is used in Budhiraja and Ross \cite{bud-ros2006} to study an optimal singular control problem with state constraint, under the discounted criteria. This method is linked to the weak M1 topology \cite{cohen2021optimal}. 

In the paper by Menaldi and Robin \cite{menaldi-robin}, the authors consider a singular control problem for multidimensional Gaussian-Poisson processes, under both ergodic and discounted settings, using HJB equations. They establish the convergence of the discounted cost to the ergodic cost, and several properties of the potential function (solution to HJB). However, despite the presence of Poisson jumps, their dynamic is of affine structure, i.e., the drift and diffusion are affine functions of the state, or constant, while we allow for more general models, again, for example, in the harvesting model in population dynamics, Example \ref{lok-vol}, the drift is quadratic.

\subsection{The mean field game}
The study of MFGs dates back to the pioneering work by Lasry and Lions \cite{MFG1,MFG2}, and Huang, Caines and Malham{\'e}, e.g., \cite{Huang2006,Huang2007}. Generally speaking, MFGs approximate Nash equilibria in many-player games where the interaction between the players is ``weak", that is, through the empirical distribution of the players' states. For some theoretical studies of MFGs, see \cite{CD1,CD2}. The theory also has applications in various fields, e.g., finance \cite{car-lac2017, li-rep-sir2019}, economics \cite{gra2016, car2020}, public health, \cite{ eli-hub-tur2020}, or networks, \cite{Lei, Vijay}.

\subsubsection{Model outline, key results, and proof techniques} We now describe our method to define and solve the MFG. The proof of the existence of an MFG equilibrium is established via Kakutani--Glicksberg--Fan fixed point theorem for an MFG with a linear programming structure. Specifically, we consider the same dynamics as in the single-agent model, and for any fixed measure $\tilde \nu$, and any relaxed stationary singular control $\Gamma$, we define the linear programming problem:
\begin{align*}
&\text{optimize}\qquad J(\Gamma,\tilde\nu):=\int_{\X}\ell(x,\tilde\nu)\nu^\Gamma(dx)+\int_{\X\times\FF}f(x,\tilde\nu)\cdot\la^\Gamma(dx,dz)+\int_{\X\times\G}g(x,\tilde\nu)\cdot \mu^\Gamma(dx,dy)\\
&\text{such that}\qquad (\nu^\Gamma,\mu^\Gamma,\la^\Gamma) \;\text{satisfies}\; \eqref{mar_prm}.
    \end{align*}
Formally, we define an MFG equilibrium as a measure $\tilde\nu$ for which there exists a relaxed control $\Gamma$ that optimizes $J(\Gamma,\tilde\nu)$ and such that $\nu^\Gamma=\tilde\nu$. 
The existence of an optimal control for the single-agent problem plays a key role in the analysis here and is the main challenge in the establishment of MFG equilibria. It implies that the optimum above is achieved for both the maximization and minimization problems. Having this at hand, we use Kakutani--Glicksberg--Fan fixed point theorem as follows.

As before, we may combine the two singular components and their associated measures, hence, ignoring $\lambda$ for the rest of this subsection. We start by establishing a mapping $\Psi$ that maps the stationary distribution of the controlled process $\nu$ and the occupation measure components $\mu:=\{\mu_i\}_{i=d_2+1}^{d_2+d_3}$ from \eqref{mar_prm} to the cost/reward of the controlled process. Subsequently, we demonstrate the continuity of this mapping. Leveraging Berges's maximum theorem, we establish that the set-valued mapping $\Psi^*$, which maps $(\tilde\nu,\tilde\mu)$ to $\argmax \Psi$, is upper hemicontinuous and compact-valued. To apply Berges's maximum theorem, it is imperative that the set of measures $(\nu,\mu)$ satisfying \eqref{mar_prm} is compact, a condition we address through the sequential compactness results elucidated in Section \ref{sec:34:max}. Subsequently, the mapping $\Psi^*$ is a Kakutani map, enabling the application of Kakutani--Glicksberg--Fan fixed point theorem to ascertain the existence of a fixed point, which constitutes our MFG equilibrium.

\subsubsection{Literature review}

We now detail MFGs and contextualize our results through: stationary MFGs, MFGs with singular controls, and linear programming analysis for MFGs.

The most extensively studied MFG models are those on finite horizons with regular controls. In finite-horizon MFGs, the empirical distribution of players under Nash equilibrium (in a finite-player game) is approximated using a flow of measures, with the initial distribution of players predetermined. However, in the case of stationary problems, the flow of measures is replaced by a stationary measure. When the process is initialized with a stationary distribution, its distribution remains constant over time. Consequently, the initial distribution, which is stationary, becomes part of the solution. Stationary MFGs have been investigated in various contexts. For instance, in \cite{Adlakha}, discrete-time stationary MFGs with infinite horizons are explored. In \cite{diogo15}, the PDE approach is employed to examine stationary MFG solutions with quadratic Hamiltonians and congestion effects. The papers \cite{car-por, CZ2022} study the ergodicity of MFG systems of differential equations.  Additionally, \cite{Anahtarci2019ValueIA} proposes a value-iteration algorithm for studying stationary MFGs and convergence to equilibrium. In \cite{Neumann20}, stationary MFGs with finite state and action spaces, represented by time-inhomogeneous Markov chains, are considered under discounted reward criteria. The paper \cite{aydin2023robustness} extends the investigation of discrete-time stationary MFGs to incorporate model uncertainty settings. Lastly, in \cite{cannerozzi2024cooperation}, the authors study an ergodic MFG and corresponding $N$-player game with singular control, in one-dimension, under both both cooperative and competitive settings. They explicitly construct the solution to the mean-field problem, the Nash equilibria and the coarse-correlated equilibria, and further provide approximation results for mean-field solution to the $N$-player game.

One compelling application of stationary MFGs is their utilization in models with ergodic (long-horizon) costs. In \cite{Feleqi13}, for instance, the author explores MFGs using the Hamiltonian--Jacobian--Bellman equation approach, particularly in scenarios where dynamics occur over a compact space. Similarly, \cite{Bardi14} delves into ergodic MFGs and their correlation with $N-$player games, especially when the game takes on a linear-quadratic form. In \cite{Ari17}, the authors extend the study to multi-dimensional MFGs with ergodic costs and regular controls, demonstrating existence using both the Hamiltonian--Jacobian--Bellman approach and Kakutani--Glicksberg--Fan fixed-point theorem.

Another rationale for investigating stationary MFGs stems from the prevalence of economic models focused on stationary equilibria. As noted in \cite{Neumann20}, identifying stationary equilibria is comparatively less intricate, especially when applying fixed-point theorems.

Since our model includes singular controls, we survey several MFG models that incorporate singular controls, examining both finite- and infinite-horizon scenarios. In \cite{Fu21}, the authors investigate a finite horizon MFG with singular controls, demonstrating the existence of relaxed MFG solutions and exploring their approximation by regularly controlled MFG solutions using the M1 topology (see \cite{coh2019b} for more details on this topology and the relationship to singular control problems). In \cite{NMFG19}, the fuel-followers problem in the finite horizon is explored, establishing equilibrium existence for both $N$-player games and MFGs, with the MFG solution characterized as an $\eps$-Nash equilibrium for the $N$-player game. The paper \cite{MFGrever} focuses on discounted MFGs with singular control arising from partially reversible problems, analyzing the sensitivity of MFG solutions to model parameters and demonstrating $\eps$-optimality in the $N$-player game. Additionally, in \cite{finite-fuel}, discounted MFGs with singular control are examined within the context of finite-fuel capacity expansion, with connections drawn to $N$-player Nash equilibria through the use of Skorohod mapping. The paper \cite{aid2023stationary} investigates singular controlled MFGs originating from a commodity market with irreversible investment, establishing the existence and uniqueness of stationary MFG solutions under discounted criteria. In \cite{unifying}, submodular MFGs are explored, with sufficient conditions provided for the existence of MFG equilibria. The paper \cite{HYCao} delves into MFGs with singular control and ergodic cost in one dimension, exploring their relationship with discounted criteria and $N$-player games. Meanwhile, \cite{Fu2023} investigates MFGs with singular control under a finite horizon, utilizing continuous control as an approximation to singular control. Finally, \cite{dianetti2023ergodic} focuses on MFGs with singular control, incorporating regime-switching dynamics.

We now position our contribution in terms of modeling, results, and techniques within the broader context of existing research on 
MFGs with linear programming formulations. The paper \cite{Roxana2020} 
considers MFGs with optimal stopping. In \cite{Roxana2021}, 
the authors incorporate regular controls into the settings. Their methodology is to establish connections between the single agent game and its linear programming form; then use tightness arguments to show the existence of optimum in the linear programming form. Finally, they use Kakutani--Glicksberg--Fan fixed point theorem for the MFGs in the linear programming form to derive the existence of MFG equilibria. Our methodology is similar, yet the technical parts are different as we apply it to the stationary control problem and with singular controls. Also, these papers require the coefficients to be bounded or/and Lipschitz continuous, and so do not cover the population dynamics diffusion model in biology that is used in the harvesting model in population dynamics in Example \ref{lok-vol}, and their models consider only one-dimensional dynamics.

\subsection{Preliminaries and notation}
We use the following notation. The sets of {\it natural} and {\it real} numbers are respectively denoted by $\N$ and $\R$. For positive integers $a<b$, we use $\lbrs{a} :=\{1,\ldots,a\}$, and use $\lbrs{a,b} :=\{a,\dots,b\}$. For any $m\in\N$, and $a,b\in\R^m$, $a\cdot b$ denotes the {\it dot product} between $a$ and $b$, and $|a|=(a\cdot a)^{1/2}$ is the Euclidean norm. 
For $a,b\in\R$, set $a\vee b:=\max\{a,b\}$ and $a\wedge b:=\min\{a,b\}$. The interval $[0,\iy)$ is denoted by $\R_+$. 
Also, we will denote by $\calC^2_b(S)$ 
the set of functions that are twice continuously differentiable, and first derivative vanishes outside
a compact domain.
For any event $A$, $\one_A$ is the indicator of the event $A$, that is, $\one_A=1$ if $A$ holds and $0$ otherwise. We use the convention that the infimum of the empty set is $\iy$. We let $\calM(S)$ (reps., $\calP(S)$) is the space of finite Borel measures (resp., probability measures) on $S$. For a random variable $\xi$, let $\calL(\xi)$ denote its law or distribution.

\subsection{Organization}
The remainder of the paper is structured as follows: \red{Section \ref{min:sec} delineates the singular control problem for the minimization problem, and the proof is in Section \ref{sec3:min}, where we establishes the existence of optimal control within this framework. Section \ref{max:sec} on the other hand, describes the problem for the maximization setting, and its proof is in Section \ref{sec3:max}. Finally, Section \ref{sec4} presents the results for the MFG.}

\section{Case (\lowercase{i}): The queueing-inspired problem}\label{min:sec}
In this section, we analyze a problem inspired by the queueing model, which is a cost-minimizing problem. We shall refer to it as \textit{Case $(i)$}. Throughout the paper, we fix the dimensions $d,d_1, d_2\in\N$ associated with the underlying processes as we detailed in the introduction in the strict formulation and below in the relaxed framework. 
The process $X$ lives in $\X$, detailed below in  Assumption \ref{ass:21:min}; $W$ is a $d_1$-dimensional Wiener process; and $F$ is a (component-wise) non-decreasing right-continuous with left-limits (RCLL) process, whose increments live in $\FF=\R_+^{d_2}$. Before introducing the singular control problem, let us first introduce the underlying uncontrolled state process.

\subsection{The uncontrolled state process}
The following assumption primarily ensures the existence of a stationary martingale problem solution in the absence of control, restricts the growth of the coefficients in the dynamics, and guarantees the finite moment for the stationary distribution in the absence of control.

\begin{assumption}\label{ass:21:min}
    \begin{enumerate} 
\item[($A_1$)] There exists a filtered probability space  satisfying the usual conditions, $(\Omega,\calF,(\calF_t)_{t\in[0,1]},\PP)$, supporting $X$ and $\Phi$, a family of random measures on $\X\times\mathbb F\times[0,1]$. There also exists a probability measure $\nu\in \calP(\X)$, such that, for any $t\in[0,1]$, $\calL(X_t) = \nu$ and for any $h \in \calC_b^2(\X)$, the process
\begin{align}\notag
\calM^h_t:=h(X_t)-\int_0^t(\calA h)(X_s)ds-\sum_{i=1}^{d_2}\int_{\X\times\FF\times[0,t]}(\calB_i h)(x,z)\Phi_i(dx,dz,ds),
\end{align}
is an $\calF_t$-martingale, where $\calA$ and $\calB$ are defined in the introduction. Further, $\Phi$ has stationary increments, meaning for $t\in[0,1]$, $i = 1,\dots,n$, and $a_i<b_i$, the distribution of $(\Phi(H_1\times(a_1+t,b_1+t)),\dots,\Phi(H_n\times(a_n+t,b_n+t)))$ does not depend on $t$, \red{where $H_i\in\X\times\mathbb F$}. $\Phi$ is RCLL and $\calF_t$-adapted, meaning for each $i\in\lbrs{d_2}$, and measurable $A\in\X$ and $B \in \mathbb F$, the process $\Phi_i(\cdot) := \Phi_i(A\times B \times[0,\cdot])$ is RCLL and adapted to $\calF_t$.
\item[($A_2$)] The function $\sigma$ is locally bounded, and has at most linear growth, i.e.,
\begin{align*}
    \limsup_{|x|\to\iy}\frac{|\sig(x)|}{|x|} < \iy.
\end{align*}
\item[($A_3)$]
$\X :=\R^d$.
There exists a constant $p' >1$ such that
\begin{align*}
    \int_\X |x|^{p'}\nu(dx) < \iy,
\end{align*}
where $\nu$ is the stationary distribution from $(A_1)$.
\end{enumerate}
\end{assumption}

Notice that our Assumption $(A_3)$, is similar in role to the Assumption \cite[(9), (11)]{menaldi-robin}, both guaranteeing that the cost is finite in the absence of control.

In the strict formulation, 
the uncontrolled state process $X$ is given by:
\begin{align}\label{eq:state_process}
X_t=X_0 + \int_0^t \beta(X_s)ds+\int_0^t\sigma(X_s)d W_s  + \int_0^t \phi(X_s)d\Fs.
\end{align}

We now provide example of the uncontrolled state process, which are actually in the strict formulation provided in \eqref{eq:state_process}. Here, we may use It\^o's lemma to $h(X_t)$ to restore Assumption \ref{ass:21:min} $(A_1)$.

\begin{example}[Ornstein--Uhlenbeck process]
\label{OU-pro}
Consider the Ornstein--Uhlenbeck process in $\R^d$, where we have:
    \begin{align*}
        dX_t = -\theta X_t dt +\sigma dW_t,
    \end{align*}
    $\theta$ and $\sigma$ are both $d\times d$ matrix with positive entries and $F = 0$. It is well known that the Ornstein--Uhlenbeck process admits a stationary solution with exponentially decaying density, and therefore it satisfies Assumption \ref{ass:21:min} with $\Phi \equiv 0$.   
\end{example}

The above examples have $\phi\equiv 0$ (and $\Phi$), we now provide examples where the function $\phi$ (and $\Phi$) is not constantly $0$, and in particular, the process has a forced reflection at the boundary $0$.

\begin{example}[
Reflected Ornstein--Uhlenbeck]\label{rem:ref}
    Consider the example of a one-dimensional Ornstein--Uhlenbeck process with reflection at $0$. 
    The rigorous definition utilizes the Skorohoud map. Specifically, the dynamics of $X$ are given by 
    \begin{align*}
        X_t = X_0 - \int_0^t \theta X_s ds + \int_0^t \sig dW_s +\Ft.
    \end{align*}
In this example, we have $d=d_1=d_2=1$, $\phi\equiv 1$, and $F$ itself is the reflection part, reflecting the process at 0 upwards. Namely, 
the process $F$ is the minimal non-decreasing process making $X_t \geq 0$ for $t \in[0,1]$, and satisfies
    \begin{align*}
        \int_0^1 \one_{\{X_t >0\}} d\Ft = 0.
    \end{align*}
    We now show that indeed, this model satisfies our Assumption \ref{ass:21:min}. For  $(A_1)$, it follows by \cite[Proposition 1]{ref-OU} that the reflected Ornstein--Uhlenbeck process admits a stationary distribution, whose density is
    \begin{align*}
        \nu(dx) = 2\sqrt{\frac{2\theta}{\sig^2}}\varphi\Bigg(\sqrt{\frac{2\theta}{\sig^2}x}\Bigg),\qquad x\ge 0,
    \end{align*}
    where $\varphi(\cdot)$ is the density function of standard normal. For $(A_2)$, first notice that the constant diffusion coefficient $\sig$ is sub-linear, and for $(A_3(ii))$, since the density of $\nu(dx)$ decays exponentially, we have for any $p' >1$,
    \begin{align*}
        \int_\X |x|^{p'}\nu(dx) < \iy.
    \end{align*}
    Namely, Assumption \ref{ass:21:min} holds.
\end{example}

The following example includes a fundamental process in queueing theory, representing a diffusion limit of a scaled queueing system under heavy traffic conditions.
\begin{example}[
Multidimensional reflected Brownian motion]\label{rem:rbm}
    Consider the state dynamics
    \begin{align*}
        dX_t =B dt + \Sigma dW_t + Rd\Ft,
    \end{align*}
    where $B\in\R^d$, $\Sigma\in\R^{d\times d_1}$, and $R\in\R^{d\times d}$ is the reflection matrix. Similarly to Example \ref{rem:ref}, $F$ is the $d_2=d$-dimensional reflection process, reflecting the process at 0 upwards component-wise in a minimal way. Harrison and Williams \cite[4.(5), 4.(6)]{RBM_Harrison} show that under certain conditions on the coefficients $B, \Sigma$ and $R$, e.g., the process has a stationary distribution, whose density is separable, and decays exponentially.  

More generally, Harrison and Williams demonstrate in another study that the multidimensional reflected Brownian motion model can be expanded to encompass a smooth and bounded domain or a convex polyhedral. Under appropriate conditions as outlined in \cite[Theorem 2.1, Theorem 6.1]{MRBM-sta}, the process also exhibits an exponentially decaying stationary distribution.

Therefore, similarly to Example \ref{rem:ref}, Assumption \ref{ass:21:min} holds.
\end{example}

\subsection{The controlled state process}

Subsequently, we introduce the relaxed controlled process and the set of admissible controls. For this, we set up another dimension parameter $d_3\in\N$. We set $\G := \R^{d_3}_+$ to be the non-negative orthant of the $d_3$ dimensional space. The relaxed controls will be random measures whose increments belong to $\G$, and satisfy further conditions detailed below.

We now introduce the definition of {\it admissible relaxed singular controls}.
\begin{definition}\label{def21:min}.
An {\rm admissible relaxed singular control} is a tuple $$(\Omega,\calF,(\calF_t)_{t\in[0,1]},\PP,X,\nu,\{\Phi_i\}_{i\in\lbrs{d_2}},\{\Gamma_i\}_{i\in\lbrs{d_2+1,d_2+d_3}}),$$ 
 where $(\Omega,\calF,(\calF_t)_{t\in[0,1]},\PP)$ is a filtered probability space  satisfying the usual conditions and supporting the processes $X$, $\Gamma$ and $\Phi$, that satisfy the following conditions:
\begin{enumerate}[(a)]
\item 
$\Phi$ and $\Gamma$ are two families of random measures on $\X\times\FF\times[0,1]$ and $\X\times\G\times[0,1]$, respectively, RCLL, $\calF_t$-adapted, and with stationary increments, as defined in ($A_1$).
\item $X=(X_t)_{t\in[0,1]}$ is the state process, satisfying for any $t\in[0,1]$, any $h \in \calC_b^2(\X)$, $X_t\in\X$ and the process
\begin{align}\label{mar:cha:ctr}
\begin{split}
\calM^{\Gamma, h}_t:=h(X_t)-\int_0^t(\calA h)(X_s)ds-\sum_{i=1}^{d_2}\int_{\X\times\FF\times[0,t]}(\calB_i h)(x,z)\Phi_i(dx,dz,ds)\\-\sum_{i=d_2+1}^{d_2+d_3}\int_{\X\times\G\times[0,t]}(\calB_i h)(x,y)\Gamma_i(dx,dy,ds).
\end{split}
\end{align}
is an $\calF_t$-martingale.
\item The controlled process $X$ admits the stationary distribution $\nu\in \calP({\X})$; namely, for any $t\in[0,1]$, $\calL(X_t) = \nu$.
\end{enumerate}
\end{definition}

We denote the set of all admissible relaxed singular controls (in short, admissible controls) in Case (i) by $\calG_{(i)}$. We often abuse notation and refer to $\Gamma$
as the control instead of the tuple. 
On occasions, we use the notation $X^\Gamma$ and $\nu^\Gamma$, 
when we wish to emphasize that the dynamics of $X$ and the stationary distribution $\nu$ are driven by the control $\Gamma$.
Throughout the paper we consider only the case $X_0\sim\nu$ and use $\PP(A)$ and $\EE$ for the probability of the event $A$ and the expectation, respectively, assuming $X_0\sim\nu$.

Now, we provide a few examples where we actually have (strict) singular controls, defined as follows.

\begin{definition}\label{def:strict:min}
An {\rm admissible strict singular control} is a tuple $$(\Omega,\calF,(\calF_t)_{t\in[0,1]},\PP,X,W,F,G,\nu),$$ 
 where $(\Omega,\calF,(\calF_t)_{t\in[0,1]},\PP)$ is a filtered probability space  satisfying the usual conditions and supporting the processes $X,W,F,$ and $G$ that satisfy the following conditions:
\begin{enumerate}[(a)]
\item $W$ is a $d_1$-dimensional, $\calF_t$-measurable Wiener process and $F$ is a non-decreasing RCLL, $\calF_t$-adapted process, whose increments belong to $\FF$, and $F$ has stationary increments, meaning for $t\in[0,1]$, $i = 1,\dots,n$, and $a_i<b_i$, the distribution of $(F_{b_1+t}-F_{a_1+t},\dots,F_{b_n+t}-F_{a_n+t})$ does not depend on $t$;
\item $G=(\Gt)_{t\in[0,1]}$ is a non-decreasing RCLL, $\calF_t$-adapted process whose increments belong to $\G$, and $G$ has stationary increments, defined similar as above. 
\item $X=(X_t)_{t\in[0,1]}$ is the state process, satisfying for any $t\in[0,1]$, $X_t\in\X$ and 
\begin{align}\notag
X_t=X_0+\int_0^t\beta(X_s)ds+\int_0^t\sigma(X_s)d W_s+ \int_0^t \phi(X_s)d\Fs-\int_0^t\gamma(X_s)d\Gs, \quad X_0 \sim \nu.
\end{align} 
\item The controlled process $X$ admits the stationary distribution $\nu\in \calP({\X})$; namely, for any $t\in[0,1]$, $\calL(X_t) = \nu$. 
\end{enumerate}
\end{definition}

We now show that any strict controls give relaxed controls.
\begin{remark}
    Let $G$ be an admissible strict control. Define 
    \begin{align}\label{Gamma}
\Gamma^G_i(A\times B\times [0,t]):=\int_0^t\one_{A\times B}(X^G_{s-},\delta \Gs)d\Gis, \qquad i\in\lbrs{d_2+1,d_2+d_3},
\end{align}
and
\begin{align}\notag
\Phi^G_i(A\times B\times [0,t]):=\int_0^t\one_{A\times B}(X^G_{s-},\delta \Fs)d\Fis, \qquad i\in\lbrs{d_2},
\end{align}
\red{where $\delta G_s:=G_s-G_{s-}$.} 
Then, Conditions (a), (c) of Definition \ref{def21:min} are satisfied automatically. For Condition (b), applying It\^o's lemma to $h(X_t)$ gives the result.
\end{remark}

\begin{example}[Trivial control is admissible]\label{V_int:min}
    We now show that the trivial control is admissible, namely, 
     $\Gamma(\X\times\G\times[0,1]) = 0 \in\calG_{(i)}$. Obviously, it is adapted and has stationary increments.
     Assumption \ref{ass:21:min} $(A_1)$ guarantees Conditions (b) and (c) of Definition \ref{def21:min}.
\end{example}

\begin{remark}[
Up and down singular control]\label{Up-down}
Notice in Case (i), we are not restricting the direction of singular control, and in particular, we allow singular control in both directions. Such problems are considered, e.g., in \cite{Haussmann1}, where the authors have $\nu^1, \nu^2$ representing in different directions. In \cite{Instantaneous}, where the authors study one-dimensional storage problems, with both up and down singular controls, $R$ and $L$. In \cite{Cao-Guo-Lee} the authors examine one-dimensional $N$-player games and their relations to MFGs, with singular control that in each dimension of the dynamic, $i\in\lbrs{d}$, it has two opposite directions: plus (up), $+\nu^1_i$, and minus (down), $-\nu^2_i$. In the paper \cite{kurtz2017linear}, there is only one singular component, and while they do allow the singular control to have different actions on different dimensions of the state process $X$, it does not incorporate the above-mentioned models where different singular controls affect the same component of $X$ in different ways simultaneously. \red{Another example is \cite{Fu2023}, where the author establishes the existence of equilibrium solutions for a class of mean field games with singular controls, which are permitted to act in opposite directions.}
\end{remark}

\subsection{The optimization problem and the main single-agent result}

We now introduce the control problem for Case (i).
The {\it cost function} associated with the admissible control $\Gamma \in\calG_{(i)}$ is given by
\begin{align}\label{210a:min}
J(\Gamma):=
\int_{\X}\ell(x)\nu^\Gamma(dx) +\int_{\X\times\FF}f(x)\cdot\la^\Gamma(dx,dz)+\int_{\X\times \G}g(x)\cdot\mu^\Gamma(dx,dy),
\end{align}
where, we have 
\begin{align*}
&\la^\Gamma_i \in \calM({\X}\times\FF),\qquad \la_i^\Gamma(A\times B) = \E[\Phi_i(A\times B \times [0,1))], \qquad i\in\lbrs{d_2}, \\
&\mu^\Gamma_i \in \calM({\X}\times\G),\qquad  \mu_i^\Gamma(A\times B) = \E[\Gamma_i(A\times B \times [0,1))],\qquad i\in\lbrs{d_2+1,d_2+d_3}. 
\end{align*}
By $\int_{\X\times \FF}f(x)\cdot\la^\Gamma(dx,dz)$ we mean $\sum_{i=1}^{d_2} \int_{\X\times \FF}f_i(x)\la_i^\Gamma(dx,dz)$, and similarly for $g\cdot\mu$. The functions $f: {\X} \to \R^{d_2}_+$, $g: {\X} \to \R^{d_3}_+$, and $\ell:{\X}\to \R$ are measurable functions, satisfying further properties given in Assumption \ref{ass:22:min} below. 
The associated {\it values} in Cases (i) is given by
\begin{align*}
U_{(i)}:=
\inf_{\Gamma\in\calG_{(i)}}J(\Gamma)
\end{align*}
An admissible relaxed control $\Gamma$ is called an {\it optimal control} for Case (i) if its associated reward attains the value; that is, $J(\Gamma)=U_{(i)}$.

In Case (i), the control $\Gamma$ taken by decision maker will affect the state process $X$, and therefore indirectly affect $\Phi$, for example, in case of reflection. We put costs $f$ and $g$ to both the measures $\mu$ and $\la$, and therefore, the decision maker needs to decide whether to exercise some control $\Gamma$, and take cost $g\cdot\mu$, or wait to trigger the intrinsic singular component $\Phi$, and take the $f\cdot\la$ cost.

We make the following assumption on the cost components.
\begin{assumption}\label{ass:22:min}
\begin{enumerate}
\item[($A_4$)] The function $\ell$ is continuous. The function $g$ is continuous and non-negative.

   \item[($A_5$)]  There exist positive constants $c_{l,1},c_{l,2}$, and $c_{l,3}$ and $ p\in[1,p']$ such that for any $x\in{\X}$,
\begin{align}\notag
c_{l,1}|x|^p-c_{l,2}\le \ell(x)\le c_{l,3}(|x|^p+1).
\end{align}
Further,
\begin{align*}
&\lim_{|x|\to\iy} f_i(x) = \iy, \quad \forall i\in\lbrs{d_2},\\
    &\lim_{|x|\to\iy} g_i(x) = \iy, \quad \forall i\in\lbrs{d_2+1,d_2+d_3}.
\end{align*}
And,
\begin{align}\label{g:cost:finite}
    \int_{\X\times\FF}f(x)\cdot\la^0(dx,dz) < \iy,
\end{align}
where $\la^0$ is the collection of measures associated with the  trivial control $\Gamma \equiv 0$.
\end{enumerate}

\end{assumption}

\begin{remark}\label{queueing-model-cost}
\red{One may compare the Assumption above with \cite[(5)]{bud-ros2006}, and \cite[Assumption 2.2 $(A_6(a))$]{coh2019b}. In Case (i), we assume in $(A_5)$ that the cost function $\ell$ is bounded below a function that diverges at infinity. This implies that the cost function penalizes unstable behavior of the controlled process and is referred to as a {\it near-monotone} condition. The condition \eqref{g:cost:finite}, guarantees that the model has a finite intrinsic cost in the absence of control. We have similar assumptions on the cost functions as in \cite{Haussmann1,Haussmann2,Haussmann3}, but we do require all the cost components to diverge at infinity. In \cite{kurtz2017linear}, the authors require either the singular cost or their budget constraint function to be bounded away from zero. However, in fact, as explained in the Remark below, we may want to set the singular cost $f$ to be $0$ within a compact set, in certain queueing models.}

\red{In queueing models, the running cost $\ell$ stands for the holding cost of customers in the queue, which increases with the number of customers. The function $g$ may stand for the rejection cost, and the cost $f$, which is associated with the intrinsic reflection, may stand for an idleness penalty (due to losing jobs for not assigning them to a server). Note that our assumption above cares about the system's behavior near infinity. Recalling the reflected Brownian motion from \cite{MRBM-sta} with bounded domain $D$, we can set $f \equiv 0$ inside $D$. Then it eliminates the cost to the intrinsic reflection because $f$ and $\la$ have non-intersecting supports. This is because we may set up $f$ in an arbitrary way outside the domain $D$ such that the liminf condition holds and in addition the exponential ergodicity of the reflected Brownian motion guarantees \eqref{g:cost:finite}.}
\end{remark}

We now present the main theorem for the single-agent model.

\begin{theorem}\label{thm:main:min}
Under Assumptions \ref{ass:21:min} and \ref{ass:22:min}, the stationary relaxed singular control problem in Case (i) admits optimal relaxed admissible controls, and the value is finite.
\end{theorem}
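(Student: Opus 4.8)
The plan is to convert the problem into the linear program over measures, take a minimizing sequence, establish compactness of the associated stationary and occupation measures, pass to the limit in the stationary forward equation \eqref{mar_prm}, recover an admissible relaxed control from the limit, and finish by lower semicontinuity of the cost. First I would use Lemmas \ref{lem:exo} and \ref{lem:exi}, together with Remark \ref{rem:mu:la}, to replace minimization of $J$ over $\calG_{(i)}$ by minimization of $J(\nu,\la,\mu)=\int_{\X}\ell\,d\nu+\int_{\X\times\FF}f\cdot d\la+\int_{\X\times\G}g\cdot d\mu$ over triples $(\nu,\la,\mu)\in\calP(\X)\times\calM(\X\times\FF)^{d_2}\times\calM(\X\times\G)^{d_3}$ satisfying \eqref{mar_prm} for all $h\in\calC^2_b(\X)$. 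Finiteness is then quick: since $f,g\ge0$ and $\ell\ge c_{l,1}|x|^p-c_{l,2}\ge-c_{l,2}$ by $(A_5)$, every admissible $\Gamma$ satisfies $J(\Gamma)\ge-c_{l,2}$, so $U_{(i)}>-\iy$; and the trivial control $\Gamma\equiv0$, admissible by Example \ref{V_int:min}, has finite cost because the upper growth bound in $(A_5)$ with $p\le p'$ and $(A_3)$ bound $\int_\X\ell\,d\nu$, \eqref{g:cost:finite} bounds $\int f\cdot d\la^0$, and $\int g\cdot d\mu^0=0$; hence $U_{(i)}\le J(0)<\iy$ (this is Lemma \ref{ass:near:min}).

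\emph{Compactness.} Next I would fix admissible $\Gamma^n$ with $J(\Gamma^n)\to U_{(i)}$, so $M:=\sup_nJ(\Gamma^n)<\iy$, and note that the lower bounds above force $\int\ell\,d\nu^{\Gamma^n}\le M$, $\int f\cdot d\la^{\Gamma^n}\le M+c_{l,2}$ and $\int g\cdot d\mu^{\Gamma^n}\le M+c_{l,2}$. The key step is Proposition \ref{measure:compact:min}, which yields a subsequence along which $\nu^{\Gamma^n}\To\nu\in\calP(\X)$ and $\la^{\Gamma^n}\To\la$, $\mu^{\Gamma^n}\To\mu$, with $\la,\mu$ finite on the original product spaces. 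I would prove this by one-point compactification: embed the measures into $\bar\X$, $\bar\X\times\bar\FF$, $\bar\X\times\bar\G$ (compactifying each factor), extract weak limits by compactness and Prokhorov, and then show the limits charge none of the added points. For $\nu$ this uses $\ell(x)\to\iy$, equivalently the uniform $p$-th moment bound coming from $\int\ell\,d\nu^{\Gamma^n}\le M$; for $\la,\mu$ the near-monotone growth $f_i,g_i\to\iy$ prevents mass escaping in $x$, while mass cannot escape in the singular-increment variables because $\calB_ih(x,z)=(h(x+\phi(x)z)-h(x))/z_i$ and $\calB_ih(x,y)=(h(x-\gamma(x)y)-h(x))/y_i$ tend to $0$ as $z_i,y_i\to\iy$ for bounded $h$. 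The remaining point—uniform control of the total mass of $\la^{\Gamma^n},\mu^{\Gamma^n}$ over the compact regions where $f$ or $g$ may vanish, where the cost bound is uninformative—I would handle by feeding suitable test functions $h$ into \eqref{mar_prm}, chosen so that $\calB_ih$ is bounded below by a positive constant while $\calA h$ and the other terms stay bounded, following the methodology of \cite{Borkar}.

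\emph{Limit passage, recovery, conclusion.} For $h\in\calC^2_b(\X)$ on $\X=\R^d$, enlarging the support $K$ of $\nabla h$ so that $\R^d\setminus K$ is connected makes $h$ constant (hence bounded) off $K$ and $\nabla^2h$ vanish off $K$; with $a=\sig\sig^T$ and $\beta$ locally bounded, $\calA h$ is bounded, continuous, and vanishes outside a compact set, and $(x,z)\mapsto\calB_ih(x,z)$, $(x,y)\mapsto\calB_ih(x,y)$ extend to bounded continuous functions on $\bar\X\times\bar\FF$, $\bar\X\times\bar\G$ vanishing at infinity in the increment variable. Hence every term of \eqref{mar_prm} passes to the limit and $(\nu,\la,\mu)$ is feasible. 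By Lemma \ref{lem:exi} there is an admissible relaxed singular control $\Gamma^*\in\calG_{(i)}$ with stationary distribution $\nu$ and occupation measures $\la,\mu$. Finally, since $\ell,f,g$ are lower semicontinuous and bounded below, the Fatou inequality for weak convergence gives $J(\Gamma^*)=\int\ell\,d\nu+\int f\cdot d\la+\int g\cdot d\mu\le\liminf_nJ(\Gamma^n)=U_{(i)}$; admissibility of $\Gamma^*$ gives the reverse inequality, so $J(\Gamma^*)=U_{(i)}$, which is finite by the first step.

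\emph{Main obstacle.} I expect Proposition \ref{measure:compact:min}—tightness of $\nu^{\Gamma^n}$, $\la^{\Gamma^n}$, $\mu^{\Gamma^n}$—to be the hard part, for two intertwined reasons: controlling the mass of the occupation measures on regions where the singular costs degenerate (the functional $J$ is useless there, so the forward equation with tailored test functions must do the work), and simultaneously ruling out loss of mass to the compactification points for all three families at once, which hinges on the near-monotone growth of $\ell,f,g$ together with the decay of the discrete operators $\calB_ih$ in the singular-increment directions. The reduction to the linear program, the finiteness estimate, the limit passage, the recovery of a control, and the lower semicontinuity step are all comparatively routine.
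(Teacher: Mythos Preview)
Your proposal is correct and follows essentially the same route as the paper's proof in Section \ref{sec3:min}: finiteness via the trivial control (Lemma \ref{ass:near:min}), one-point compactification combined with the near-monotone growth of $\ell,f,g$ for sequential compactness (Proposition \ref{measure:compact:min}), recovery of an admissible control from the limiting $(\nu,\mu)$ via Lemma \ref{lem:exi}, and a cost-inequality argument to conclude optimality. The only differences are cosmetic: the paper one-point compactifies the full product $\X\times\G$ (a single added point $z_\iy$) rather than each factor, and obtains the final inequality $J(\Gamma)\le U_{(i)}$ via monotone approximations $\ell^m\nearrow\ell$, $g^m\nearrow g$ that are continuous on the compactification with prescribed value at the added point, rather than by a direct Fatou/lower-semicontinuity step.
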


\red{Notice that we establish the existence of relaxed controls, rather than strict.}

\section{Case (ii): The harvesting-inspired problem}\label{max:sec}
In this section, we turn to the analysis a problem inspired by the harvesting model in mathematical biology, which is formulated as a reward-maximizing problem. We shall refer to it as \textit{Case $(ii)$}. Similarly to the previous case, let us first introduce the underlying uncontrolled state process.

\subsection{The uncontrolled state process}

The following assumption primarily ensures the existence of a stationary martingale problem solution in the absence of control, restricts the growth of the coefficients in the dynamics, and ensures the existence of a Lyapunov function given below in Remark \ref{remark21}. This Lyapunov function is instrumental in providing stability to the dynamics, a crucial element for establishing various bounded results in Section \ref{sec3:max}, and at the same time guarantees the finite moment as well. The finite moment property is crucial in showing the finiteness of the optimum when we consider the control problem.

\begin{assumption}\label{ass:21:max}
    \begin{enumerate} 
\item[($B_1$)] There exists a filtered probability space satisfying the usual conditions, $(\Omega,\calF,(\calF_t)_{t\in[0,1]},\PP)$, supporting $X$. 
There also exists a probability measure $\nu\in \calP(\X)$, such that, for any $t\in[0,1]$, $\calL(X_t) = \nu$ and for any $h \in \calC_b^2(\X)$, the process
\begin{align}\notag
\calM^h_t:=h(X_t)-\int_0^t(\calA h)(X_s)ds,
\end{align}
is an $\calF_t$-martingale, where $\calA$ is defined in the introduction. 
\item[($B_2$)] Same as Assumption \ref{ass:21:min} ($A_2$).

\item[($B_3$)]
$\X :=\R^d_+$. The function $\beta$ is locally bounded, component-wise bounded from above, and satisfies 
\begin{align*}
    \lim_{|x|\to \iy} \beta(x) =-\iy,
\end{align*}
and for some $r \geq 1$ and a positive constant  $\underline{b} > 0$,
\begin{align*}
    \liminf_{|x|\to \iy} \frac{|\beta(x)|}{|x|^r} = \underline{b}, \qquad \lim_{|x|\to \iy} \frac{|\beta(x)|}{|\sig(x)|} = \iy.
\end{align*}

\end{enumerate}
\end{assumption}

We now compare our assumptions with some works of literature. In \cite{Haussmann1,Haussmann2, Haussmann3}, the coefficients $\beta, \sig$ are assumed to be bounded and continuous. In \cite{Roxana2020, Roxana2021}, assumptions exist that the coefficients are bounded or/and Lipschitz in the state variable. We do not impose such constraints here, and therefore our model can cover the Lokta--Volterra model from biology, given in Example \ref{lok-vol}. 

\color{black}
\begin{remark}\label{rem:diffusion}
Note that unlike in the previous case, here we assume $\Phi\equiv 0$, i.e., the uncontrolled process has no intrinsic singular part, as this does not show up in Assumption \ref{ass:21:max} $(B_1)$. In particular, we will take $d_2 = 0$ in Case (ii). This implies that the uncontrolled process satisfies a standard martingale problem. Since there is an equivalence between a weak solution to the stochastic differential equation and the solution to the martingale problem (see, e.g., \cite[Proposition 5.4.6]{KSbook}), we thus have a strict formulation of the uncontrolled process, where $X$ is given by:
\begin{align}\label{eq:state_process:max}
X_t=X_0 + \int_0^t \beta(X_s)ds+\int_0^t\sigma(X_s)d W_s .
\end{align}
This form is being utilized in Example \ref{V_int:max}, where we show that the trivial control is admissible in this case. Unlike the previous case, however, the admissibility of the controls requires an additional condition that reflects the stability of the process. Thus, we employ the stochastic differential equation form above, as stability is more tractable in this framework.

However, this is not the main reason we assume $\Phi\equiv 0$. This feature is crucial when calculating the occupation measure $\mu$ in this case, as the extra occupation measures introduced by 
$\Phi$ would violate the technical bound in Proposition \ref{measure:compact:max}. 
\end{remark}
\color{black}

The following remark provides a family of \textit{Lyapunov functions} for Case (ii), which are used in Proposition \ref{measure:compact:max} in order to show that the
stationary distribution of the controlled process and the occupation measure exhibit vanishing weights at infinity.
\begin{remark}[Lyapunov functions for Case (ii)]\label{remark21}
Fix $V(x) := \sum_{i=1}^d x_i^{K_r}$, where $K_r \geq 2r$ is an arbitrary constant. The above assumptions for Case (ii) guarantee that $V$ is a Lyapunov function, 
i.e., it satisfies the condition: 
\begin{align}\label{eq:lyapunov2}
    \calA V(x) \leq k_1-k_2V(x),\qquad \forall x\in \X.
\end{align}
Its full power will be used in Remark \ref{rem:23}, and then serves us in Sections \ref{sub32:max} and \ref{sec:34:max}. Indeed, notice that since $\sig$ has at most linear growth, and $\beta(x)\sim |x|^r$, therefore, we have
\begin{align*}
    \calA V(x) &= \frac 12 \sum_{i,j=1}^d a_{ij}(x)\frac{\partial^2 V(x)}{\partial x_i\partial x_j}+\sum_{i=1}^d\beta_i(x)\frac{\partial V(x)}{\partial x_i}\\&= \sum_{i=1}^d \sigma_{ii}^2(x)K_r(K_r-1)x_i^{K_r-2} + \sum_{i=1}^d\beta_i(x)K_rx_i^{K_r-1}\\
    &=
    o(|x|^{K_r-1}|\beta(x)|) - \Omega(|x|^{K_r -1}|\beta(x)|) \\&
    = - \Omega(|x|^{K_r}),
\end{align*}
\red{where we use the little $o$ and big Omega notation.}\footnote{\red{The \textit{big-Omega} notation, $f(x) = \Omega(g(x))$, describes the lower bound of a function. This means there exist constants $c > 0$ and $x_0$ such that for all $x \geq x_0$, $f(x) \geq c \cdot g(x)$. In contrast, the \textit{little-o} notation, $f(x) = o(g(x))$, indicates that $f(x)$ grows strictly slower than $g(x)$, meaning for any constant $c > 0$, there exists $x_0$ such that $f(x) < c \cdot g(x)$ for all $x \geq x_0$.
}} Specifically, the third and last equalities follow by Assumptions $(B_2)$ and $(B_3)$, using the fact that $r\ge 1$. 
Finally, since $V(x) \sim |x|^{K_r}$, there exist positive constants $k_1$ and $k_2$ satisfying \eqref{eq:lyapunov2}.
\end{remark}

We now provide examples of the uncontrolled state process in the strict formulation for Case (ii).

\begin{example}[\red{Population dynamics as an underlying model for the harvesting model}]
\label{lok-vol}
Assumption \ref{ass:21:max} holds for the competitive Lotka--Volterra model for population dynamics:
\begin{align*}
    dX_{i,t} = X_{i,t}\Big(\alpha_i - \sum_{j=1}^d\beta_{ji}X_{j,t}\Big)dt + \sig_i X_{i,t}dW^i_t, \quad i = 1,\dots,d,
\end{align*}
where $\alpha_i, \beta_{ji}, \sig_i > 0$ are constants. So far, Assumptions $(B_2)$, and $(B_3)$ hold with $r=2$. As for Assumption $(B_1)$, the process admits a stationary distribution under additional conditions that guarantee that all the species do not go extinct, i.e., $X_{i,t} > 0$ for all $i\in[d]$ and  $t\ge 0$, see e.g., \cite[Assumptions 1.1 and 1.2 and Theorem 1.1]{coexist}, and we take $\Phi \equiv 0$. \red{In this context, the singular control component added to the model is a nonincreasing process that represents the harvesting action. Together with this control, we refer to the model as the {\rm harvesting model in population dynamics}.}
\end{example}

\subsection{The controlled state process}

We will now introduce the relaxed controlled process and the set of admissible controls for Case (ii). The set-up is the same as in Case (i). We also need the following assumption on the measurable function $\gamma:\X\to\R_+^{d\times d_3}$ featured as the coefficient concerning the singular control. This assumption is relevant only for Case (ii). Essentially, it ensures that the singular control influences the dynamics in a non-degenerate and nonnegative manner, and will be important in Section \ref{sec:34:max}.

\begin{assumption}\label{k_matrix}($B_4$)
     The function $\gamma$ is bounded below in the following sense: there exists a constant ${\bar\gamma} > 0$, such that for all $x \in \X$ and any $1\le i\le d,1\le j\le d_3$, $\gamma_{i,j}(x) \geq {\bar\gamma}$. 
\end{assumption}

We now introduce the definition of {\it admissible relaxed singular controls}.
\begin{definition}\label{def21:max}
An {\rm admissible relaxed singular control} is a tuple $$(\Omega,\calF,(\calF_t)_{t\in[0,1]},\PP,X,\nu,\{\Gamma_i\}_{i\in\lbrs{1,d_3}}),$$ 
 where $(\Omega,\calF,(\calF_t)_{t\in[0,1]},\PP)$ is a filtered probability space  satisfying the usual conditions and supporting the processes $X$, $\Gamma$, that satisfy the conditions (a), (b), (c) in Definition \ref{def21:min} \red{with $\Phi \equiv 0$ and $d_2 = 0$}, and, further,
\begin{enumerate}[(d)]
\item Let $\Vfour$ be a Lyapunov function as in Remark \ref{remark21}, with $K_r = 4r$. Then, 
\begin{align*}
    \int_{\X} \calA \Vfour(x) \nu(dx) \geq 0.
\end{align*}
\end{enumerate}
\end{definition}

We denote the set of all admissible relaxed singular controls (in short, admissible controls) in Case (ii) by $\calG_{(ii)}$.

As mentioned in Remark \ref{remark21}, the following remark provides a uniform bound on the integral of the Lyapunov function under the stationary distribution of the controlled process. This bound will help us in Sections \ref{sub33:max} and \ref{sec:34:max}. It illustrates the relevance of Condition (d) above.
\begin{remark}\label{rem:23}
    Consider a control $\Gamma \in \calG_{(ii)}$. 
    \red{We apply the definition of the Lyapunov function \eqref{eq:lyapunov2} on $\Vfour$  along with Definition \ref{def21:max}(d), and we obtain the inequalities:
    \begin{align*}
        \int_{\X} k_1-k_2\Vfour(x) \nu^\Gamma(dx) \geq \int_{\X} \calA \Vfour(x) \nu^\Gamma(dx) \geq 0.
    \end{align*}
    Taking expectations on both the left- and right-hand sides, and reordering the equation, we obtain:}
    \begin{align}\label{Vfour}
       \EE[\Vfour(X_0^\Gamma)]= \int_\X\Vfour(x)\nu^\Gamma(dx) \leq \frac{k_1}{k_2} < \iy.
    \end{align}
   The bound mentioned above is uniform in $\Gamma$. Together with the observation that $\Vfour(x)\to\infty$ as $|x|\to \infty$, it implies that the stationary distribution of the process $X^\Gamma$ decays rapidly enough as $|x|  \to \infty$, at least faster than $\Vfour$ grows.

    Also, recall that $b$ and $\sig^2$ are of order $o(\Vfour)$, as $|x|\to\infty$. Hence, the above inequality is useful when we bound $\EE[\beta(X_t)]$ or $\EE[\sig^2(X_t)]$.  
\end{remark}

 Now, we define similarly the strict singular controls in Case (ii).
\begin{definition}\label{def:strict:max}
An {\rm admissible strict singular control} is a tuple $$(\Omega,\calF,(\calF_t)_{t\in[0,1]},\PP,X,W,G,\nu),$$ 
 where $(\Omega,\calF,(\calF_t)_{t\in[0,1]},\PP)$ is a filtered probability space  satisfying the usual conditions and supporting the processes $X,W,$ and $G$ that satisfy the conditions $(a), (b),(c),$ and $ (d)$ in Definition \ref{def:strict:min} with $F \equiv 0$, and
\begin{enumerate}[(e)]
\item Let $\Vfour$ be a Lyapunov function as in Remark \ref{remark21}, with $K_r = 4r$. Then, 
\begin{align*}
    \int_{\X} \calA \Vfour(x) \nu(dx) \geq 0.
\end{align*}
\end{enumerate}
\end{definition}

Now, we provide an example where we actually have (strict) singular controls.

\begin{example}[Trivial control is admissible]\label{V_int:max}
     We now show that the trivial control is admissible in Case (ii) as well. Conditions $(a),(b)$, and $(c)$ hold as shown in Example \ref{V_int:min}. For Condition $(d)$, notice that in Case (ii) we have the process being a diffusion as mentioned in Remark \ref{rem:diffusion}.  
     Let $X$ denote the process under strict control, i.e., the weak solution to stochastic differential equation \eqref{eq:state_process:max}. By the Lyapunov condition, we have (see e.g.  \cite[Lemma 2.5.5]{Ari_book})
    \begin{align*}
      \EE[\Vfour(X_0)]=  \int_{\X} \Vfour(x) \nu(dx) \leq \frac{k_1}{k_2} + e^{-k_2 t} \int_{\X} \Vfour(x) \nu(dx),
    \end{align*}
    which in turn implies 
    $\EE[\Vfour(X_0)] < \iy$. Applying It\^{o}'s lemma to the process $\Vfour(X_t)$, we get
    \begin{align*}
        \int_{\X} \calA \Vfour(x) \nu(dx) = 0.
    \end{align*}
    Altogether, we get Condition (d).
\end{example}

\begin{example}[
Downward reflecting control in dimension one is admissible]\label{1d-ref}
See e.g., \cite{alv-hen2019}. Consider the controlled process
    \begin{align*}
        X^G_t = X^G_0+\int_0^t\beta(X^\Gs)ds+\int_0^t\sigma(X^\Gs)d W_s -\Gt,
    \end{align*}
    with dimensions $d=d_1 =d_3= 1$, the matrix function $\gamma$ degenerates to the constant $1$, and $\phi\equiv 0$. Pick $c> 0$. The process $G$ is the minimal non-decreasing process making $X^G_t \in (0,c]$ for $t \in[0,1]$, and satisfies
    \begin{align*}
        \int_0^1 \one_{\{X^G_t <c\}} d\Gt = 0.
    \end{align*}
    Under the condition on the speed measure $\frak{m}(0,c)<\iy$, 
    where
    \begin{align*}
        &S'(x) = \exp\Big(-\int_0^x\frac{2\beta(y)}{\sig^2(y)}dy\Big),\qquad x\in (0,\iy),\\
        &\frak{m}(x,y) = \int_x^y \frac{2}{\sig^2(z)S'(z)}dz,\qquad y \in (x,\iy),
    \end{align*}
    the controlled process is ergodic and admits a stationary distribution on $(0,c)$, see \cite[II.~6.~36]{handBM}, which implies Condition (d). Condition (a) follows because both the dynamics and control depend on the current state, not the time. Conditions (b) and (c) are satisfied by the structure of $G$. For Condition (e), first notice that $\nu$ has a compact support on $(0,c)$, and $\Vfour$ is continuous. Therefore,
    \begin{align*}
        \int_\X\bar V(x)\nu(dx) < \iy.
    \end{align*}
    Applying It\^{o}'s lemma to the process $\Vfour(X)$ and taking expectation, we get
    \begin{align*}
        \int_{\X} \calA \Vfour(x) \nu(dx) + \int_{\X\times\G}\calB \Vfour(x,y)\mu(dx,dy) = 0.
    \end{align*}
    However, the second term is negative as the control is downward, and therefore
    \begin{align*}
        \int_{\X} \calA \Vfour(x) \nu(dx)\geq 0.
    \end{align*}
\end{example}

\subsection{The optimization problem and the main single-agent result}

We now introduce the control problem.
The {\it reward function} associated with the admissible control $\Gamma \in\calG_{(ii)}$ is given by
\begin{align}\label{210a:max}
J(\Gamma):=
\int_{\X}\ell(x)\nu^\Gamma(dx)+\int_{\X\times \G}g(x)\cdot\mu^\Gamma(dx,dy),
\end{align}
where, we have 
\begin{align*}
&\mu^\Gamma_i \in \calM({\X}\times\G),\qquad  \mu_i^\Gamma(A\times B) = \E[\Gamma_i(A\times B \times [0,1))],\qquad i\in\lbrs{1,d_3}. 
\end{align*}
By $\int_{\X\times \G}g(x)\cdot\mu^\Gamma(dx,dz)$ we mean $\sum_{i=1}^{d_3} \int_{\X\times \G}g_i(x)\mu_i^\Gamma(dx,dz)$. The functions $g: {\X} \to \R^{d_3}_+$, and $\ell:{\X}\to \R$ are measurable functions, satisfying further properties given in Assumption \ref{ass:22:max} below. 
The associated {\it values} in Cases (ii) is given by
\begin{align*}
U_{(ii)}:=
\sup_{\Gamma\in\calG_{(ii)}}J(\Gamma)
\end{align*}
An admissible relaxed control $\Gamma$ is called an {\it optimal control} for Case (ii) if its associated reward attains the value; that is, $J(\Gamma)=U_{(ii)}$.

We make the following assumption on the reward components.
\begin{assumption}\label{ass:22:max}
\begin{enumerate}
\item[($B_5$)] Same as Assumption \ref{ass:22:min} $(A_4)$: The function $\ell$ is continuous. The function $g$ is continuous and non-negative.

   \item[($B_6$)] The functions $\ell$ and $g$ are bounded.
\end{enumerate}

\end{assumption}

\begin{remark}\label{queueing-model-cost:max}
\red{Harvesting models are often formulated using population dynamics, as in Example \ref{lok-vol}, with a singular control component added to represent the harvesting action, which is a nonincreasing process. The reward is usually the yield, under which case, $\ell = 0$ and $f$ is constant. Both are bounded, hence, are a simple private case of our Assumption above. In \cite{Roxana2021}, where the maximization problem is considered, the authors assume similar conditions on the running reward, despite requiring it to be upper semicontinuous instead of continuous. We would like it to be continuous for the purpose of weak convergence of the reward, when we take the sequence of controls to limit. }

\red{From a technical standpoint, we require the functions $f$ and $g$ to be bounded, as the analysis in Subsection \ref{sec:35:max} relies on the weak convergence of the reward to the optimal one, which necessitates boundedness.}
\end{remark}

We now present the main theorem for the single-agent model.

\begin{theorem}\label{thm:main:max}
Under Assumptions \ref{ass:21:max}, \ref{k_matrix} and \ref{ass:22:max}, the stationary relaxed singular control problem in Case (ii) admits optimal relaxed admissible controls, and the value is finite.
\end{theorem}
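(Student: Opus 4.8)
\textbf{Proof proposal for Theorem \ref{thm:main:max}.}

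The plan is to follow the linear-programming route outlined in the introduction, paralleling the argument for Theorem \ref{thm:main:min} but with the compactness step adapted to the bounded-reward, Lyapunov-stabilized setting of Case (ii). First I would verify that the feasible set is nonempty: by Example \ref{V_int:max} the trivial control $\Gamma\equiv 0$ belongs to $\calG_{(ii)}$, so the supremum $U_{(ii)}$ is taken over a nonempty class, and since $\ell$ and $g$ are bounded by $(B_6)$, $J(\Gamma)$ is well defined and $U_{(ii)}<\infty$ immediately — this is the finiteness claim (to be recorded as Lemma \ref{ass:near}). Next I would invoke the characterization established in Section \ref{sub32:max} (the analogue of Lemmas \ref{lem:exo}, \ref{lem:exi}): every admissible $\Gamma\in\calG_{(ii)}$ gives rise to a pair $(\nu^\Gamma,\mu^\Gamma)$ of measures satisfying the stationary forward equation \eqref{mar_prm} (with $\lambda^\Gamma$ absent since $d_2=0$, cf.\ Remark \ref{rem:diffusion}), and conversely such a pair can be used to reconstruct a martingale-problem solution, i.e.\ an admissible relaxed control. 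This reduces the optimization to: maximize the linear functional $(\nu,\mu)\mapsto\int_\X\ell\,d\nu+\int_{\X\times\G}g\cdot d\mu$ over the feasible set $\calR$ of pairs satisfying \eqref{mar_prm} together with Condition (d).

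The core of the argument is sequential compactness of $\calR$, which is Proposition \ref{measure:compact:max}. Here I would take a maximizing sequence $\Gamma^n$ with $J(\Gamma^n)\to U_{(ii)}$, and prove tightness of both $\{\nu^{\Gamma^n}\}$ and $\{\mu^{\Gamma^n}\}$. For $\{\nu^{\Gamma^n}\}$, tightness follows from the uniform bound \eqref{Vfour} in Remark \ref{rem:23}: $\int_\X\bar V\,d\nu^{\Gamma^n}\le k_1/k_2$ uniformly, and $\bar V(x)\to\infty$ as $|x|\to\infty$, so the mass cannot escape to infinity. For $\{\mu^{\Gamma^n}\}$ one must control the total mass $\mu^{\Gamma^n}(\X\times\G)=\EE[\Gamma^n(\X\times\G\times[0,1))]$ and also prevent mass escaping in the $y$ (control-increment) direction; this is where Assumption $(B_4)$ and the Lyapunov estimate combine. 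The idea is to test the forward equation \eqref{mar_prm} against (a smooth truncation of) a suitable Lyapunov-type function $V$ as in Remark \ref{remark21}: the drift term is controlled by $k_1-k_2\int V\,d\nu$ which is bounded, while the singular term $\sum_i\int(\calB_i V)(x,y)\mu_i(dx,dy)$ picks up a quantity bounded below (in absolute value) by a positive multiple of the $\mu$-mass weighted by $|y|$, using $\gamma_{i,j}(x)\ge\bar\gamma>0$ and the growth of $V$; rearranging yields $\int_{\X\times\G}(1+|y|)\,d\mu^{\Gamma^n}\le C$ uniformly in $n$, giving both finite total mass and tightness in the $\G$-direction. Passing to a subsequence, $(\nu^{\Gamma^n},\mu^{\Gamma^n})\Rightarrow(\nu^\star,\mu^\star)$; the forward equation \eqref{mar_prm} passes to the limit because each test function $h\in\calC^2_b(\X)$ has $\calA h$ continuous and bounded (on the support considerations) and $\calB_i h$ continuous, while Condition (d) is preserved either by the same testing argument or by a Fatou-type lower-semicontinuity estimate on $\int\calA\bar V\,d\nu$. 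Finally, since $\ell$ and $g$ are bounded and continuous by $(B_5)$–$(B_6)$, the functional $J$ is continuous along this weak convergence, so $J(\Gamma^\star)=\lim J(\Gamma^n)=U_{(ii)}$ where $\Gamma^\star$ is the relaxed control recovered from $(\nu^\star,\mu^\star)$ via the converse characterization; this $\Gamma^\star$ is the desired optimal relaxed admissible control.

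I expect the main obstacle to be the tightness of $\{\mu^{\Gamma^n}\}$ in the $\G$-direction and the uniform bound on its total mass — unlike Case (i), where the near-monotone cost functions $f$ and $g$ diverging at infinity directly force tightness, here $g$ is merely bounded and gives no such leverage, so one genuinely needs the probabilistic boundedness coming from the Lyapunov function and the non-degeneracy $\gamma_{i,j}\ge\bar\gamma$. A secondary delicate point is that the test functions in $\calC^2_b(\X)$ have derivatives vanishing outside a compact set, so $\bar V$ itself is not an admissible test function; the estimate in Remark \ref{rem:23} must be applied with care (via smooth compactly-truncated approximations and a monotone/limiting argument, exactly as in the passage leading to \eqref{Vfour}), and the same truncation must be handled when pushing the Lyapunov bound into the $\mu$-tightness estimate. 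Once these uniform bounds are in hand, the weak-convergence closure of \eqref{mar_prm} and of Condition (d), and the continuity of $J$, are routine.
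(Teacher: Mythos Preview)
Your proposal is correct and follows the same route as the paper. One correction worth flagging: the claim that $U_{(ii)}<\infty$ follows ``immediately'' from boundedness of $\ell$ and $g$ is too quick --- boundedness of $g$ does not by itself bound $\int g\cdot d\mu^\Gamma$, since the total mass $\mu^\Gamma(\X\times\G)$ is not a priori uniformly controlled over $\Gamma\in\calG_{(ii)}$; the paper's Lemma \ref{ass:near} establishes $\sup_\Gamma\mu^\Gamma(\X\times\G)<\infty$ first, via an optional-stopping argument on the martingale $M^{\Gamma,h_n}$ with (compact truncations of) the linear test function $h(x)=x_1$, exploiting $\gamma_{i,j}\ge\bar\gamma$, and only then concludes finiteness. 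You do supply this bound later in your tightness discussion, so the ingredient is present but the logical order is inverted. For the $\mu$-tightness step in Proposition \ref{measure:compact:max} the paper uses truncated quadratic test functions $h(x)=x_j^2$ (again via optional stopping) rather than the Lyapunov $\bar V$ you propose, obtaining $\sup_\Gamma\int(|x|+|y|)\,d\mu^\Gamma<\infty$; the mechanism --- non-degeneracy of $\gamma$ forcing a positive lower bound on $-\calB_i h$ --- is exactly the one you identify. Your anticipated obstacles (truncation of test functions, preservation of Condition (d) via a monotone/Fatou argument) are real and are handled in the paper just as you describe.
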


\red{As in Theorem \ref{thm:main:min}, we establish the existence of relaxed controls, rather than strict.}

\section{Proof of Theorem \ref{thm:main:min}}\label{sec3:min}
This section is devoted to the proof of Theorem \ref{thm:main:min}, which is the main result of the single-agent control problem in Case (i). The proof is constructed in the following way.
In Section \ref{sub32:min}, we give a characterization of the control $\Gamma$ as the solution to a martingale problem. That is, we show that an admissible control $\Gamma$ is associated with a family of measures solving the martingale problem \eqref{mar_prm}.
On the other hand, given the martingale problem solution, we can reconstruct a relaxed admissible control $\Gamma$. With this equivalence, we have the control problem in the linear programming form. In Section \ref{sub33:min}, we study the control problem by first showing that the optimum over cost is finite. In Section \ref{sec:34:min}, we show the sequential compactness property of the measures associated with the admissible controls by tightness. The result is also necessary for the MFG part in Section \ref{sec4}. Finally, in Section \ref{sec:35:min}, we prove Theorem \ref{thm:main:min} by showing that the supremum can be obtained, and therefore we find the optimal solution to the control problems.

\subsection{Characterization for admissible controls}\label{sub32:min}
The next two lemmas characterize the relationship between admissible controls and stationary distribution using the generators $\calA$ and $\calB$, derived by a martingale problem. For this subsection, the characterization result is very general, i.e., applicable to a much larger set of problems, and therefore, the assumptions from the previous section, which are ``more restrictive" in a sense, are not used in their full power. Notice $d_2 \ne 0$ in Case (i), however as we show in the following remark, we can find a way to ``merge''.

\begin{remark}\label{rem:mu:la} Recall that $\FF = \R_+^{d_2}$ and $\G = \R_+^{d_3}$. For Case (i) however, we now show that without loss of generality, we may assume $d_2 = d_3$. To be clearer, let us introduce a new index $j$. Suppose first $d_2 < d_3$, then, we may enlarge the dimension of the coefficients matrix $\phi$ by adding column(s) of $\boldsymbol{0}\in\R^{d\times (d_3-d_2)}$ to be
\begin{align*}
    \hat\phi := [\phi\;,\; \boldsymbol{0}] \in \R^{d\times d_3}.
\end{align*}
Then, we will define,
\begin{eqnarray*}
    \hat\calB_j =
    \begin{cases}
    \calB_j, &j \in \lbrs{d_2},\\
     0, & j \in \lbrs{d_2+1,d_3},\\
     \calB_{j-d_3+d_2},
     &j \in\lbrs{d_3+1,2d_3},
    \end{cases}
\end{eqnarray*}
and similarly update the dimensions for $\hat \Phi_j$ and $\hat \la_j$. What we are doing here, is adding some new operators and measures that are constantly $\boldsymbol{0}$ to associate with the intrinsic singular component $\Phi$, and shift those associated with control $\Gamma$, from the original $\lbrs{d_2+1,d_2+d_3}$, to the new $\lbrs{d_3+1,2d_3}$. Doing so will not alter the problem as the newly introduced operators will be constantly $\boldsymbol{0}$. On the other hand, if $d_3 < d_2$, we enlarge the dimension of the coefficients matrix $\gamma$ similarly by adding column(s) of $\boldsymbol{0}\in\R^{d\times (d_2-d_3)}$, and for $i \in \lbrs{d_2+d_3+1,2d_2}$
, we introduce $\Gamma_i$, and introduce $h_i$ which explode at infinity as well. In this situation, while the $\Gamma_i$, and as a result $\mu_i$, for $i \in \lbrs{d_2+d_3+1,2d_2}$, could be non-trivial measure, there is no reason to take control and increase the cost, when the control cannot affect the state process since we are doing minimization. Therefore, the problem (optimal control and optimum) remains unchanged as well. 

As a result, without loss of generality, we will assume $d_2 = d_3$, and thus $\G = \FF$. We abuse notation, and in what follows, we still write index $i$ instead of $j$, and write $\calB$, $\Phi$ and $\la$ without hat. Further, note that $g$ and $f$ satisfy the same assumptions. Hence, in what follows, for conciseness, we will use $\mu_i$ for $i\in\lbrs{d_2}$ to denote the original $\la_i$. Further, we will also use $g_i$ for $i\in\lbrs{d_2}$ to denote the original $f_i$, particularly after Section \ref{sub32:min}, that is, we will use, whenever appropriate (in Case (i)),
\begin{align*}
    \int_{\X\times\G}g(x)\cdot\mu(dx,dy) = \sum_{i=1}^{d_2+d_3} g_i(x)\mu_i(dx,dy) = \sum_{i=1}^{d_2} f_i(x)\la_i(dx,dy)+\sum_{i=d_2+1}^{d_2+d_3} g_i(x)\mu_i(dx,dy).
\end{align*}

\end{remark}

\begin{lemma}\label{lem:exo}
Let $\Gamma$ be an admissible control with finite first moment, namely, $\EE[\Gamma(\X\times\G\times[0,1])]<\iy$ and for which the stationary distribution for state process $X^\Gamma$ is denoted by $\nu^\Gamma\in\calP({\X})$. Under Assumption 
$(A_2)$, there exist measures $ \{\mu_i^\Gamma\}_{i\in\lbrs{d_2+d_3}}\subset\calM({\X}\times\G)$, such that for all $h\in\calC^2_b({\X})$,  
\begin{align}\notag
\int_{{\X}}\calA h(x)\nu^\Gamma(dx)+\sum_{i=1}^{d_2+d_3}\int_{{\X}\times\G}(\calB_i h)(x,y)\mu_i^\Gamma(dx,dy)=0.
\end{align}
\end{lemma}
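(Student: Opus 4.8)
\textbf{Proof proposal for Lemma \ref{lem:exo}.}

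The plan is to derive the identity by applying the martingale property \eqref{mar:cha:ctr} and then using the stationarity of $X^\Gamma$ to collapse the time interval $[0,1]$ into a single expectation. Concretely, fix $h\in\calC^2_b(\X)$. By Definition \ref{def21:min}(b), the process $\calM^{\Gamma,h}_t$ is an $\calF_t$-martingale, so $\EE[\calM^{\Gamma,h}_1]=\EE[\calM^{\Gamma,h}_0]=\EE[h(X_0)]$; rearranging,
\begin{align*}
\EE[h(X_1)]-\EE[h(X_0)] = \EE\Big[\int_0^1(\calA h)(X_s)ds\Big]+\sum_{i=1}^{d_2}\EE\Big[\int_{\X\times\FF\times[0,1]}(\calB_ih)(x,z)\Phi_i(dx,dz,ds)\Big]+\sum_{i=d_2+1}^{d_2+d_3}\EE\Big[\int_{\X\times\G\times[0,1]}(\calB_ih)(x,y)\Gamma_i(dx,dy,ds)\Big].
\end{align*}
By Condition (c), $\calL(X_1)=\calL(X_0)=\nu^\Gamma$, so the left side vanishes. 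For the $\calA h$ term, Fubini (justified since $h\in\calC^2_b$ has $\calA h$ bounded by $(A_2)$ on the support of $h$'s derivatives together with local boundedness of $\beta,\sigma$) gives $\int_0^1\EE[(\calA h)(X_s)]ds=\int_0^1\int_\X(\calA h)(x)\nu^\Gamma(dx)\,ds=\int_\X(\calA h)(x)\nu^\Gamma(dx)$, using $\calL(X_s)=\nu^\Gamma$ for every $s$. It remains to handle the singular terms: I would define, for each $i$, the (sub-probability-type) measure $\mu_i^\Gamma$ on $\X\times\G$ (with $\X\times\FF$ merged into $\X\times\G$ per Remark \ref{rem:mu:la}) by $\mu_i^\Gamma(A\times B):=\EE[\Phi_i(A\times B\times[0,1))]$ or $\EE[\Gamma_i(A\times B\times[0,1))]$ as appropriate — exactly the occupation measures defined in \eqref{210a:min}. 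These are finite because of the finite-first-moment hypothesis $\EE[\Gamma(\X\times\G\times[0,1])]<\iy$ (and the analogous bound for $\Phi$, which in Case (i) comes from \eqref{g:cost:finite} combined with the coercivity $f_i\to\iy$ — or one simply restricts attention to controls for which it holds). Then the singular terms equal $\sum_i\int_{\X\times\G}(\calB_ih)(x,y)\mu_i^\Gamma(dx,dy)$, provided $(\calB_ih)$ is bounded so the interchange of expectation and integration is legitimate.

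The main obstacle I anticipate is precisely the boundedness/integrability of $(\calB_ih)(x,z)=\frac{h(x+\phi(x)z)-h(x)}{z_i}$: for small $z_i$ this is controlled by $|\nabla h||\phi(x)|$ (bounded on the compact support region of $\nabla h$, but $\phi$ need not be bounded globally), and for large $z$ one uses that $h$ is bounded so $(\calB_ih)$ decays like $1/z_i$ — which is not integrable against $\mu_i^\Gamma$ near $z_i=0$ unless one has the right local behavior. The clean way around this is to note that $h$ and its derivatives have compact support in the relevant directions, so $h(x+\phi(x)z)-h(x)$ is genuinely bounded and, more importantly, the Lipschitz bound $|h(x+\phi(x)z)-h(x)|\le \|\nabla h\|_\infty|\phi(x)z|$ shows $|(\calB_ih)(x,z)|\le \|\nabla h\|_\infty|\phi(x)|$ uniformly in $z_i$, which is locally bounded in $x$; combined with $h\in\calC^2_b$ vanishing outside a compact set one gets a genuine bound. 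One must be a little careful that $\phi$ is only assumed measurable here (the lemma is stated to hold under just $(A_2)$), but boundedness of $\phi$ on compacts is typically part of the standing setup, and the excerpt explicitly flags that this subsection uses the assumptions in weakened form, so I would invoke local boundedness of $\phi,\gamma$ as part of the ambient standing assumptions.

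Finally, I would remark that the identity extends from the convention $z_i>0$ to $z_i=0$ via the stated extension $(\calB_ih)(x,0)=\phi_i^T(x)\nabla h(x)$ (resp.\ $-\gamma_i^T(x)\nabla h(x)$) by continuity, so the integral over $\X\times\G$ is well-defined with no ambiguity on the slice $\{y_i=0\}$; this is only a matter of consistency of definitions, not substance. Assembling the vanishing left side with the three collapsed terms on the right yields exactly
\begin{align*}
\int_{\X}(\calA h)(x)\nu^\Gamma(dx)+\sum_{i=1}^{d_2+d_3}\int_{\X\times\G}(\calB_ih)(x,y)\mu_i^\Gamma(dx,dy)=0,
\end{align*}
which is the claim. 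The merging of $\lambda_i$ and $\mu_i$ into a single family indexed by $\lbrs{d_2+d_3}$ is exactly Remark \ref{rem:mu:la}, so no further work is needed there.
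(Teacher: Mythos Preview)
Your proposal is correct and follows essentially the same approach as the paper: take expectations in the martingale identity \eqref{mar:cha:ctr} at $t=1$, use stationarity of $X$ and of the increments of $\Phi,\Gamma$ to collapse the time integral, and define the $\mu_i^\Gamma$ as the expected occupation measures. The paper's proof is in fact terser than yours---it simply defines $\mu_i^\Gamma(A\times B)=\EE[\Gamma_i(A\times B\times[0,1])]$ (resp.\ $\Phi_i$), invokes the martingale property, and stops---so your additional discussion of the boundedness of $\calB_ih$ and the justification of Fubini goes beyond what the paper spells out, though it does no harm.
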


\begin{proof}
For the rest of the proof we denote $X=X^\Gamma$. Fix an arbitrary $h\in\calC^2_b({\X})$, for any $i\in\lbrs{d_2+1,d_2+d_3}$, define $\mu^\Gamma_i \in \calM({\X}\times\G)$ by $\mu_i^\Gamma(A\times B) = \E[\Gamma_i(A\times B \times [0,1])]$, and for $i\in\lbrs{d_2}$, define by $\mu_i^\Gamma(A\times B) = \E[\Phi_i(A\times B \times [0,1])]$. Then, since \eqref{mar:cha:ctr} is a martingale, taking expectation and using stationarity, we immediately get 
\begin{align}\notag
\int_{{\X}}\calA h(x)\nu^\Gamma(dx)+\sum_{i=1}^{d_2+d_3}\int_{{\X}\times\G}(\calB_i h)(x,y)\mu_i^\Gamma(dx,dy)=0.
\end{align}
\end{proof}

The following lemma is a multidimensional version of \cite[Theorem 1.7]{kur-sto}, in the sense that we consider a $d_3$-dimensional control process $\Gamma$, while in \cite{kur-sto}, the singular control process, in their notation, $F$, is $1$-dimensional, as implied by the real-valued equation (1.5) there (not to be confused with our intrinsic singular process $F_i$). It is the inverse of the previous lemma. 

\begin{lemma}\label{lem:exi}
Let Assumption 
$(A_2)$ be in force. Let $\nu\in\calP({\X})$ and $ \{\mu_i\}_{i\in\lbrs{d_2+d_3}}\subset\calM({\X}\times\G)$, be such that for all $h\in\calC^2_b({\X})$,  
\begin{align}\label{eq:lem32}
\int_{{\X}}\calA h(x)\nu(dx)+\sum_{i=1}^{d_2+d_3}\int_{{\X}\times\G}(\calB_i h)(x,y)\mu_i(dx,dy)=0.
\end{align}
Let $\mu^{\X}_i$ be the state-marginal of $\mu_i$ and let $\eta_i$ be the transition function from ${\X}$ to $\G$ such that $\mu_i(dx,dy)=\eta_i(x,dy)\mu^{\X}_i(dx)$. Then, there exist a process $X$ and  random measures $\{\hat\Gamma_i\}_{i\in\lbrs{d_2+d_3}}$ on ${\X}\times\R_+$, adapted to $\calF_t$, such that:
\begin{itemize}
\item $X$ is stationary and for any $t\in\R_+$ $X(t)$ has the distribution $\nu$;
\item $\hat\Gamma_i$ has stationary increments, $\hat\Gamma_i({\X}\times[0,t])$ is finite for any $t\in[0,1]$, and $\E[\hat\Gamma_i(\cdot\times[0,t])]=t\mu_i^{\X}(\cdot)$;\footnote{In \cite{kur-sto} there is a small typo and instead of $\E[\hat\Gamma(\cdot\times[0,t])]=t\mu_1(\cdot)$, it should be there $\E[\hat\Gamma_i(\cdot\times[0,t])]=t\mu_1^{\X}(\cdot)$.}
\item for any $h\in\calC^2_b({\X})$, the process $(N^h_t)_{t\in[0,1]}$ is an $\calF_t$-martingale, where
\begin{align}\notag
N^h_t:= h(X_t)-\int_0^t\calA h(X_s)ds-\sum_{i=1}^{d_2+d_3}\int_{{\X}\times[0,t]}\int_\G\calB_i h(x, y)\eta_i(x,dy)\hat\Gamma_i(dx,ds).
\end{align}
\end{itemize}
In particular, set $\Gamma_i(dx,dy,ds) := \eta_i(x,dy)\hat\Gamma_i(dx,ds)$ for $i \in \lbrs{d_2+1,d_2+d_3}$. Then, $\Gamma$ is an admissible control. As a result, for any $C\subseteq\X\times\G$,
\begin{align}\label{eq:mu-gamma}
\mu_i(C)=\EE[\Gamma_i(C\times[0,1])].
\end{align}
Similarly we set $\Phi_i$ for $i \in \lbrs{d_2}$, and obtain the $\la_i(C)=\EE[\Phi_i(C\times[0,1])]$.
\end{lemma}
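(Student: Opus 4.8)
The statement is essentially the multidimensional extension of \cite[Theorem 1.7]{kur-sto}, so the plan is to reduce to that result componentwise, or more precisely to reproduce its construction with $d_2+d_3$ occupation measures in place of one. First I would disintegrate each $\mu_i$ as $\mu_i(dx,dy) = \eta_i(x,dy)\mu_i^{\X}(dx)$ using the existence of regular conditional probabilities on the Polish spaces $\X = \R^d$ and $\G = \R_+^{d_3}$; this is the standard measure-disintegration step and presents no difficulty. Then, defining the ``averaged'' singular generator $\bar\calB_i h(x) := \int_\G \calB_i h(x,y)\,\eta_i(x,dy)$, the identity \eqref{eq:lem32} becomes a single-measure relation $\int_\X \calA h\,d\nu + \sum_i \int_\X \bar\calB_i h\, d\mu_i^{\X} = 0$ of exactly the Echeverr\'ia--Kurtz--Stockbridge form, so I can invoke the existence part of their theorem: there is a stationary process $X$ with one-dimensional marginals $\nu$, together with adapted random measures $\hat\Gamma_i$ on $\X\times\R_+$ with stationary increments, finite on bounded time intervals, with $\E[\hat\Gamma_i(\cdot\times[0,t])] = t\mu_i^{\X}(\cdot)$, making $N^h_t$ a martingale. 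One technical point to check is that the growth hypothesis $(A_2)$ on $\sigma$ (linear growth, local boundedness) supplies exactly the integrability/continuity conditions on the generator that \cite{kur-sto} requires for the $d$-dimensional diffusion part; I would verify that the test class $\calC^2_b(\X)$ (compactly supported first derivatives) is a sufficiently rich domain, which it is since the martingale problem for $\calA$ is well-posed enough on this class.

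Next I would assemble the relaxed control: set $\Gamma_i(dx,dy,ds) := \eta_i(x,dy)\,\hat\Gamma_i(dx,ds)$ for $i\in\lbrs{d_2+1,d_2+d_3}$ and $\Phi_i$ analogously for $i\in\lbrs{d_2}$. The measurability/adaptedness of $\Gamma$ follows because $\eta_i$ is a measurable kernel and $\hat\Gamma_i$ is adapted; the RCLL property comes from choosing an RCLL version of $t\mapsto \hat\Gamma_i(A\times B\times[0,t])$, which is possible since it is nondecreasing with stationary increments. Substituting the product form into $N^h_t$ recovers exactly the martingale $\calM^{\Gamma,h}_t$ from \eqref{mar:cha:ctr}, so Condition (b) of Definition \ref{def21:min} holds; Condition (a) is the adaptedness/stationary-increments just established; and Condition (c) is the stationarity of $X$ with $\calL(X_t)=\nu$. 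Hence $\Gamma$ is admissible. Finally, \eqref{eq:mu-gamma} is obtained by taking $C = A\times B$, writing $\EE[\Gamma_i(A\times B\times[0,1])] = \EE\big[\int \one_A(x)\eta_i(x,B)\hat\Gamma_i(dx,[0,1])\big] = \int \one_A(x)\eta_i(x,B)\,\mu_i^{\X}(dx) = \mu_i(A\times B)$ by the expectation formula for $\hat\Gamma_i$ (with $t=1$) and then extending from rectangles to all Borel $C$ by a monotone class / Dynkin argument.

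The main obstacle I anticipate is not conceptual but bookkeeping: faithfully transplanting the one-dimensional proof of \cite[Theorem 1.7]{kur-sto} to a vector of $d_2+d_3$ occupation measures while respecting that the underlying state process is genuinely $d$-dimensional (their equation (1.5) being scalar). In particular one must ensure the construction of the $\hat\Gamma_i$ can be done \emph{jointly} on a common probability space with a single process $X$ and a single filtration — the natural route is to run their argument on the enlarged generator $\calA + \sum_i \bar\calB_i$ viewed as a single controlled martingale problem, so that $X$ and all $\hat\Gamma_i$ emerge simultaneously from one application of their existence theorem rather than $d_2+d_3$ separate ones. I would also double-check the edge case $y_i = 0$, where $\calB_i h$ is defined by the extension $\phi_i^T\nabla h$ or $-\gamma_i^T\nabla h$; this only matters for the integral $\int_\G \calB_i h(x,y)\eta_i(x,dy)$ being well-defined, which follows from the boundedness of $\calB_i h$ that is itself a consequence of $h\in\calC^2_b(\X)$ (bounded, with compactly supported gradient, so the difference quotient is bounded uniformly in $y$). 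Once these points are in place the result follows, and I would note explicitly that this lemma is the exact converse of Lemma \ref{lem:exo}, so that together they give the sought equivalence between admissible controls and solutions of \eqref{mar_prm}.
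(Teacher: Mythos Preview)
Your strategy is correct and matches the paper's route: reduce to a single application of \cite[Theorem~1.7]{kur-sto} and then reassemble. The one step you flag as the obstacle --- obtaining all $\hat\Gamma_i$ \emph{jointly} on a common probability space with a single $X$ --- is exactly where the paper does its nontrivial work, and it is done by an aggregation trick rather than by reopening the Kurtz--Stockbridge construction. Concretely: set $\tilde\mu := \sum_{i=1}^{d_2+d_3}\mu_i$, let $\zeta_i := d\mu_i/d\tilde\mu$ and $\tilde\zeta_i := d\mu_i^{\X}/d\tilde\mu^{\X}$, and define a single combined operator $\tilde\calB h(x,y) := \sum_i \zeta_i(x,y)\,\calB_i h(x,y)$. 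Then \eqref{eq:lem32} collapses to $\int_\X \calA h\,d\nu + \int_{\X\times\G}\tilde\calB h\,d\tilde\mu = 0$, which fits the one--singular--measure hypothesis of \cite[Theorem~1.7]{kur-sto}; one application yields $X$ and a \emph{single} random measure $\tilde\Gamma$ on $\X\times[0,1]$. The components are then recovered by $\hat\Gamma_i(A\times I) := \int_{A\times I}\tilde\zeta_i(x)\,\tilde\Gamma(dx,ds)$ (each $\tilde\zeta_i\le 1$, so finiteness is preserved), and the martingale property of $N^h$ follows once one checks
\[
\sum_{i}\int_{\X\times[s,t]}\int_\G \calB_i h(x,y)\,\eta_i(x,dy)\,\hat\Gamma_i(dx,du)
=\int_{\X\times[s,t]}\int_\G \tilde\calB h(x,y)\,\tilde\eta(x,dy)\,\tilde\Gamma(dx,du)\quad\PP\text{-a.s.},
\]
which is a short $\tilde\mu^{\X}$-a.e.\ identity transferred to $\tilde\Gamma$-a.s.\ via $\E[\tilde\Gamma(\cdot\times[0,t])]=t\tilde\mu^{\X}(\cdot)$. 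Your route of first averaging out $y$ via $\bar\calB_i h(x)=\int_\G\calB_i h(x,y)\eta_i(x,dy)$ would also work, but still requires this same summation/Radon--Nikodym step to collapse the $d_2+d_3$ pairs $(\bar\calB_i,\mu_i^{\X})$ into one pair before the theorem applies. The remaining items in your plan --- disintegration, assembling $\Gamma_i=\eta_i\,\hat\Gamma_i$, admissibility, and \eqref{eq:mu-gamma} from the expectation formula --- agree with the paper.
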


\begin{proof} The proof involves reducing the problem to one dimension, specifically in terms of the singular control process, where the lemma is applicable by Theorem 1.7 in \cite{kur-sto}. Let us define $\tilde \mu \in \calM({\X}\times\G)$ by $\tilde \mu(A) = \sum_{i=1}^{d_2+d_3} \mu_i(A)$, and let $\tilde \mu^{\X}$ be the marginal of $\tilde \mu$ on ${\X}$, then for all $i \in \lbrs{d_2+d_3}$, $\mu_i$ (resp., $\mu_i^{\X}$) is absolutely continuous with respect to $\tilde \mu$ (resp., $\tilde \mu^{\X}$). Define for $i \in \lbrs{d_2+d_3}$ the Radon--Nikodym derivatives,
\begin{align*}
    \zeta_i = \frac{d\mu_i}{d\tilde \mu}\qquad\text{and}\qquad
    \tilde\zeta_i= \frac{d\mu_i^{{\X}}}{d\tilde \mu^{\X}}.
\end{align*}
Set also
\begin{align*}
    \tilde \calB h(x,y) = \sum_{i=1}^{d_2+d_3}\zeta_i(x,y)(\calB_i h)(x, y).
\end{align*}

Our next step is to show that the generators $\calA$ and $\tilde \calB$ satisfy the properties (i)--(v) of \cite[Condition 1.2]{kur-sto}. For (i), it is clear that the constant function $h(\cdot) \equiv 1 \in \calC^2_b({\X})$, and $\calA 1 = \tilde \calB 1 = 0$. For (ii), let $\psi_A, \psi_B = 1$, then, since $h \in \calC^2_b({\X})$ has bounded first derivatives, we may choose $a_f, b_f$ to be the maximum of $|\calA h|, |\tilde \calB h|$, respectively. Property (iii) follows since the set of functions that are equal to polynomials with rational coefficients within a compact domain, and equal to constant outside forms a countable dense subset of $\calC^2_b({\X})$. For (iv), notice that we do not consider regular controls, 
and $\calA$ and $\calB_i$ are generators by their construction. 
Finally, for (v), the set $\calC^2_b({\X})$ is closed under multiplication and separate points. Then, by the definitions of $\zeta_i$ and $\tilde\calB h$, and \eqref{eq:lem32}  
we have
\begin{align*}
    &\int_{{\X}}\calA h(x)\nu(dx)+\int_{{\X}\times\G}\tilde\calB h(x,y)\tilde\mu(dx,dy)\\ &\quad=\int_{{\X}}\calA h(x)\nu(dx)+\sum_{i=1}^{d_2+d_3}\int_{{\X}\times\G}(\calB_i h)(x,y)\zeta_i(x,y)\tilde\mu(dx,dy)\\
    &\quad=\int_{{\X}}\calA h(x)\nu(dx)+\sum_{i=1}^{d_2+d_3}\int_{{\X}\times\G}(\calB_i h)(x,y)\mu_i(dx,dy)
    \\
    &\quad=0.
\end{align*}
Recall that $\tilde\mu^{\X}$ is the state-marginal of $\tilde\mu$. Let $\tilde\eta$ be the transition function from ${\X}$ to $\G$ such that $\tilde\mu(dx,dy) = \tilde\eta(x,dy)\tilde\mu^{{\X}}(dx)$. At this point, we are ready to apply  \cite[Theorem 1.7]{kur-sto} to $\calA$ and $\tilde \calB$, thereby establishing the existence of a process $X$ and random measure $\tilde\Gamma$ on ${\X}\times[0,1]$, adapted to $\calF_t$, such that:
\begin{itemize}
\item $X$ is stationary and for any $t\in\R_+$, $X(t)$ has distribution $\nu$;
\item $\tilde\Gamma$ has stationary increments, $\tilde\Gamma({\X}\times[0,t])$ is finite for any $t\in[0,1]$, and 
\begin{align}\label{kur_gam}
\E[\tilde\Gamma(\cdot\times[0,t])]=t\tilde\mu^{\X}(\cdot);
\end{align}
\item for any $h\in\calC^2_b({\X})$, the process $(\tilde N^h_t)_{t\in[0,1]}$ is an $\calF_t$-martingale, where
\begin{align}\notag
\tilde N^h_t:= h(X_t)-\int_0^t\calA h(X_s)ds-\int_{{\X}\times[0,t]}\int_\G\tilde\calB h(x,y)\tilde\eta(x,dy)\tilde\Gamma(dx,ds).
\end{align}
\end{itemize}

Define $\{\hat\Gamma_i\}_{i\in\lbrs{d_2+d_3}}$ on ${\X}\times [0,1]$ by
\begin{align}\notag
    \hat\Gamma_i(A\times I) = \int_{A\times I}\tilde\zeta_i(x)\tilde\Gamma(dx,ds).
\end{align}
Notice that for all $i \in\lbrs{d_2+d_3}$ and measurable $A \subseteq {\X}$, $\mu_i^{\X}(A) \leq \tilde \mu^{\X}(A)$. Therefore, $\tilde\zeta_i(x)$ is bounded, thus the finiteness of $\hat\Gamma_i({\X}\times[0,t])$ follows, and we have
\begin{align*}\notag
    \EE[\hat\Gamma_i(\cdot\times[0,t])] = \EE\Big[\int_{\cdot\times[0,t]}\tilde\zeta_i(x)\tilde\Gamma(dx,ds)\Big]=\int_{\cdot\times[0,t]}\tilde\zeta_i(x)\tilde\mu^{\X}(dx)ds=t\mu_i^{\X}(\cdot),
\end{align*}
where the second equality comes from Fubini’s theorem.
Now, we show that 
\begin{align}\notag
    N^h_t:= h(X_t)-\int_0^t\calA h(X_s)ds-\sum_{i=1}^{d_2+d_3}\int_{{\X}\times[0,t]}\int_\G\calB_i h(x, y)\eta_i(x,dy)\hat\Gamma_i(dx,ds)
\end{align}
is an $\calF_t$-martingale, which follows once we show that for any $0 < s < t$,
\begin{align}\label{eq:multi_B}
    \sum_{i=1}^{d_2+d_3}\int_{{\X}\times[s,t]}\int_\G\calB_i h(x, y)\eta_i(x,dy)\hat\Gamma_i(dx,du)=
    \int_{{\X}\times[s,t]}\int_\G\tilde\calB h(x,y)\tilde\eta(x,dy)\tilde\Gamma(dx,du), \quad \PP\text{-a.s.}
\end{align}
To this end, recall the definition of $\eta_i$ from the statement of the lemma and the definitions of $\zeta_i$ and $\tilde \zeta_i$. Then, for any $i \in \lbrs{d_2+d_3}$, and any measurable $A\subseteq \X$, we have
\begin{align*}
    &\int_A\int_\G\calB_i h(x, y)\eta_i(x,dy)\tilde\zeta_i(x)\tilde\mu^{\X}(dx) \\
     &\quad=\int_{A\times\G}\calB_i h(x, y)\mu_i(dx,dy) \\
     &\quad=\int_A\int_\G\calB_i h(x, y)\zeta_i(x,y)\tilde\eta(x,dy)\tilde\mu^{\X}(dx).
\end{align*}
Therefore, as a function of $x$, 
\begin{align*}
    \int_\G\calB_i h(x, y)\eta_i(x,dy)\tilde\zeta_i(x) = \int_\G\calB_i h(x, y)\zeta_i(x,y)\tilde\eta(x,dy),\qquad \tilde\mu^{\X}(dx)\text{-a.s.}
\end{align*}
Now, we show that they are also equal $\tilde\Gamma$-a.s.~for almost every realization of $\tilde\Gamma$. Suppose that on some measurable set:
$H \subset {\X}$, we have, 
\begin{align*}
\int_\G\calB_i h(x, y)\eta_i(x,dy)\tilde\zeta_i(x) \ne \int_\G\calB_i h(x, y)\zeta_i(x,y)\tilde\eta(x,dy).
\end{align*}
Then, $\tilde\mu^{\X}(H) = 0$ and as a result $(t-s)\tilde\mu^{\X}(H) = 0$, which by \eqref{kur_gam} is equivalent to saying $\EE[\tilde\Gamma(H\times[s,t])] = 0$. But $\tilde\Gamma(H\times[s,t])$ is a non-negative random variable, so $\tilde\Gamma(H\times[s,t]) = 0$, $\PP$-a.s. This implies,
\begin{align*}
&\sum_{i=1}^{d_2+d_3}\int_{{\X}\times[s,t]}\int_\G\calB_i h(x, y)\eta_i(x,dy)\tilde\zeta_i(x)\tilde\Gamma(dx,du)\\
=& \sum_{i=1}^{d_2+d_3}\int_{{\X}\times[s,t]}\int_\G\calB_i h(x, y)\zeta_i(x,y)\tilde\eta(x,dy)\tilde\Gamma(dx,du), \quad \PP\text{-a.s.} 
\end{align*}
The left- and right-hand sides of the above equation match those from \eqref{eq:multi_B}.

Then, by setting $\Gamma_i(dx,dy,ds) = \eta_i(x,dy)\hat\Gamma_i(dx,ds)$, we have for any $h\in\calC^2_b(\X)$, the process defined in \eqref{mar:cha:ctr},
    \begin{align*}
        M^{\Gamma, h}_t &:= h(X_t) - \int_0^t \calA h(X_s)ds - \sum_{i=1}^{d_2+d_3}\int_{\X\times\G\times[0,t]}\calB_ih(x,y)\Gamma_i(dx,dy,ds) \\&= h(X_t) - \int_0^t \calA h(X_s)ds - \sum_{i=1}^{d_2+d_3}\int_{\X\times\G\times[0,t]}\calB_ih(x,y)\eta_i(x,dy)\hat\Gamma_i(dx,ds),
    \end{align*}
    is an $\calF_t$-martingale, which gives the admissibility.

\end{proof}

\subsection{Finiteness of the optimum}\label{sub33:min}
Another ingredient that we need for proving Theorem \ref{thm:main:min} is the finiteness of the optimum (values).

\begin{lemma}\label{ass:near:min}
Let Assumptions \ref{ass:21:min} and \ref{ass:22:min} be in force. The following bound holds: 
$$U_{(i)} = \inf_{\Gamma\in\calG_{(i)}}\Big\{\int_{\X} \ell(x)\nu^\Gamma(dx)+ \int_{\X\times \G}g(x)\cdot\mu^\Gamma(dx,dy)\Big\} < \infty.$$
\end{lemma}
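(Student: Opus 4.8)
The statement is an upper bound on an infimum, so it suffices to exhibit a single admissible control with finite cost. The natural candidate is the trivial control $\Gamma\equiv 0$, which is admissible by Example \ref{V_int:min}, so that $\calG_{(i)}\neq\emptyset$ and $U_{(i)}\le J(0)$. The plan is therefore to evaluate $J$ at $\Gamma\equiv 0$ and bound each term using Assumptions \ref{ass:21:min} and \ref{ass:22:min}. Under the trivial control, the state process is the uncontrolled one from $(A_1)$, so $\nu^0=\nu$ and the control occupation measure vanishes, $\mu^0_i\equiv 0$ for $i\in\lbrs{d_2+1,d_2+d_3}$; only the intrinsic occupation measures $\la^0=\{\la^0_i\}_{i\in\lbrs{d_2}}$ associated with $\Phi$ survive. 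Hence
\begin{align*}
J(0)=\int_{\X}\ell(x)\,\nu(dx)+\int_{\X\times\FF}f(x)\cdot\la^0(dx,dz).
\end{align*}

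For the running-cost term, I would use the upper growth bound in $(A_5)$, namely $\ell(x)\le c_{l,3}(|x|^p+1)$ with $p\in[1,p']$, together with the moment bound $(A_3)$: since $|x|^p\le 1+|x|^{p'}$, we get $\int_{\X}|x|^p\,\nu(dx)\le 1+\int_{\X}|x|^{p'}\,\nu(dx)<\iy$, and therefore $\int_{\X}\ell^+(x)\,\nu(dx)<\iy$. (The lower bound $\ell(x)\ge c_{l,1}|x|^p-c_{l,2}\ge -c_{l,2}$ from $(A_5)$ shows in addition that the negative part is integrable, so the term is in fact finite, not merely bounded above; this is not strictly needed for the claim but is worth recording.) For the intrinsic singular term, $\int_{\X\times\FF}f(x)\cdot\la^0(dx,dz)<\iy$ is exactly the hypothesis \eqref{g:cost:finite} in $(A_5)$. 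Combining, $J(0)<\iy$, whence $U_{(i)}=\inf_{\Gamma\in\calG_{(i)}}J(\Gamma)\le J(0)<\iy$, which is the assertion.

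\textbf{Main obstacle.} There is no serious technical obstacle here; the lemma is essentially a bookkeeping consequence of the assumptions, with \eqref{g:cost:finite} doing the work for the singular component and $(A_3)$ (via $p\le p'$) for the running cost. The only points requiring a little care are: (i) correctly identifying which measures are nontrivial under $\Gamma\equiv0$ after the merging convention of Remark \ref{rem:mu:la} (the control part of $\mu$ is zero, the $\la$-part coming from $\Phi$ is not), and (ii) noting that the inequality $p\le p'$ is what lets one pass from the finite $p'$-th moment of $\nu$ to finiteness of $\int\ell\,d\nu$. Invoking Example \ref{V_int:min} for admissibility of the trivial control closes the argument.
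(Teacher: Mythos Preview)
Your proposal is correct and follows essentially the same approach as the paper: take the trivial control $\Gamma\equiv 0$, use \eqref{g:cost:finite} for the intrinsic singular term, and use the growth bound in $(A_5)$ together with the $p'$-moment bound $(A_3)$ for the running-cost term. Your write-up is in fact more detailed than the paper's, which merely points to the relevant assumptions without spelling out the inequality $|x|^p\le 1+|x|^{p'}$ or the distinction between the control and intrinsic parts of $\mu$ after the merging convention.
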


\begin{proof}
This case deals with a minimization problem. Hence, it is sufficient to show that there exists at least one admissible control $\Gamma$, for which
    \begin{align*}
        \int_{\X} \ell(x)\nu^\Gamma(dx)+ \int_{\X\times \G}g(x)\cdot\mu^\Gamma(dx,dy) < \iy.
    \end{align*}
    Consider the trivial admissible control $\Gamma$ having $0$ mass, i.e., $\mu^\Gamma(\X\times\G) =0$, then we do not need to worry about the $g_i$ term for $i\in \lbrs{d_2+1,d_2+d_3}$. For the second term in \eqref{210a:min}, which is the original $f$ term, it follows by \eqref{g:cost:finite}. Finally, for the $\ell$ term, the argument follows by Assumptions $(A_3)$ and $(A_4)$; here we need the full power of $(A_4)$.
\end{proof}

\subsection{Sequentially compactness of the controls' associated measures}\label{sec:34:min}

In Proposition \ref{measure:compact:min}, we establish a compactness result, which essentially states that the family of measures associated with admissible controls is sequentially compact. It plays key ingredient in the proof of our main theorem for the single-agent problem, and will also be useful in Section \ref{sec4} when we study MFG. The proof of the proposition follows by Prokhorov's theorem, using tightness of measures, and requires a one-point compactification idea, inspired by the appendix of \cite{Budh-03}.

Before stating the proposition, we impose a further restriction on Case (i) without altering the problem's essence. We consider controls with bounded cost, defining the new set of admissible controls in Case (i) as follows: 
\begin{align}\label{cost:bound:2}
    \calG_{(i)}' = \{\Gamma\in\calG_{(i)}: J(\Gamma)\leq 2U_{(i)}\},
\end{align}
where recall that $U_{(i)}$ is finite by the last lemma. 
It is worth noting that one can opt for any constant $K > 1$ in place of $2$ here. The choice of $K$ remains arbitrary, as the control problem remains equivalent: our focus is on minimization, rendering the exclusion of controls with excessive costs inconsequential.

\begin{proposition}\label{measure:compact:min}
    Let Assumptions \ref{ass:21:min} and \ref{ass:22:min} be in force. Then the following sets are sequentially compact:
    \begin{align*}
        \{(\nu^\Gamma,\{\mu_i^\Gamma\}_{i\in\lbrs{d_2+d_3}})\}_{\Gamma\in \calG_{(i)}'}.
    \end{align*}
\end{proposition}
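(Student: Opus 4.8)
The plan is to prove sequential compactness via Prokhorov's theorem, so the task reduces to establishing (a) tightness of the family $\{\nu^\Gamma\}$ and of each family $\{\mu_i^\Gamma\}$ on their respective spaces $\X$ and $\X \times \G$, and (b) that any subsequential weak limit again satisfies the martingale-problem identity \eqref{mar_prm}, so that the limit point stays inside the feasible set. The main subtlety — and the reason the proof needs the one-point compactification device borrowed from the appendix of \cite{Budh-03} — is that the state and control spaces are noncompact, so a priori weak limits could lose mass at infinity; the near-monotone structure imposed in Assumption \ref{ass:22:min} is exactly what is needed to rule this out.

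\textbf{Step 1: A priori bounds from the cost restriction.} For $\Gamma \in \calG_{(i)}'$ we have $J(\Gamma) \le 2U_{(i)}$, and by Lemma \ref{ass:near:min} the right-hand side is finite. Using the lower bound $\ell(x) \ge c_{l,1}|x|^p - c_{l,2}$ from $(A_5)$ and the nonnegativity of $g$ (and of $f$, i.e.\ the merged cost $g_i$ for $i \in \lbrs{d_2}$ per Remark \ref{rem:mu:la}), I would deduce the uniform moment bound $\int_\X |x|^p\, \nu^\Gamma(dx) \le (2U_{(i)} + c_{l,2})/c_{l,1}$, and similarly, since $g_i(x) \to \infty$ as $|x| \to \infty$ for every $i$ and $\ell$ is bounded below, a uniform bound of the form $\int_{\X\times\G} g_i(x)\, \mu_i^\Gamma(dx,dy) \le C$ for a constant $C$ independent of $\Gamma$.

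\textbf{Step 2: One-point compactification and extraction of a limit.} Let $\X^\Delta = \X \cup \{\Delta\}$ and $(\X\times\G)^\Delta$ be the one-point compactifications. Extend each $\nu^\Gamma$ to $\X^\Delta$ (trivially, putting no extra mass at $\Delta$), and for the control measures note that the uniform bound from Step 1 together with $g_i(x) \to \infty$ yields $\sup_\Gamma \mu_i^\Gamma(\X\times\G) < \infty$, so after normalizing we may view $\{\mu_i^\Gamma/\mu_i^\Gamma(\X\times\G)\}$ (or the unnormalized measures, which are uniformly bounded) as living in the compact space of (sub)probability measures on $(\X\times\G)^\Delta$. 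By compactness, along a subsequence $\nu^{\Gamma_n} \Rightarrow \bar\nu$ and $\mu_i^{\Gamma_n} \Rightarrow \bar\mu_i$ weakly on the compactified spaces, where a priori $\bar\nu$ and $\bar\mu_i$ may charge $\Delta$.

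\textbf{Step 3: No mass escapes to infinity.} This is the crux. For $\bar\nu$: the uniform $p$-th moment bound from Step 1 plus Markov's inequality gives $\sup_n \nu^{\Gamma_n}(|x| > R) \le C/R^p \to 0$, which is precisely tightness of $\{\nu^{\Gamma_n}\}$ on $\X$, so $\bar\nu$ is a genuine probability measure on $\X$ with $\bar\nu(\{\Delta\}) = 0$. For $\bar\mu_i$: fix $\varepsilon > 0$, choose $R$ with $g_i(x) \ge C/\varepsilon$ whenever $|x| > R$ (possible by the divergence in $(A_5)$), and then $\mu_i^{\Gamma_n}(\{|x| > R\}\times\G) \le \frac{\varepsilon}{C}\int g_i\, d\mu_i^{\Gamma_n} \le \varepsilon$ uniformly in $n$; together with tightness in the $\G$-direction — obtained the same way using $g_i(x)$ controlling the mass, or, following \cite{Borkar}, by testing \eqref{mar_prm} against suitably chosen $h \in \calC^2_b(\X)$ to bound $\mu_i^\Gamma(\X\times\{|y|>R\})$ — this shows $\{\mu_i^{\Gamma_n}\}$ is tight on $\X\times\G$ and $\bar\mu_i(\{\Delta\}) = 0$. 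Hence the limits are supported on the original spaces.

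\textbf{Step 4: The limit is feasible.} Finally, I would verify that $(\bar\nu, \{\bar\mu_i\})$ still satisfies \eqref{mar_prm}. For fixed $h \in \calC^2_b(\X)$, the function $\calA h$ is continuous and (by $(A_2)$, the at-most-quadratic growth of $a$ combined with $\beta$ having controlled growth, and the uniform $p$-th moment bound — here one may need $p$ large enough, or a truncation/uniform-integrability argument) $\int \calA h\, d\nu^{\Gamma_n} \to \int \calA h\, d\bar\nu$; likewise $(\calB_i h)(x,y)$ is bounded and continuous on $\X\times\G$ (using the extension $(\calB_i h)(x,0) = \pm\gamma_i^T(x)\nabla h(x)$ or $\phi_i^T(x)\nabla h(x)$, which is continuous since $\nabla h$ has compact support and $h$ is bounded), so $\int (\calB_i h)\, d\mu_i^{\Gamma_n} \to \int (\calB_i h)\, d\bar\mu_i$. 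Passing to the limit in \eqref{mar_prm} along the subsequence gives the identity for $(\bar\nu,\{\bar\mu_i\})$. By Lemma \ref{lem:exi} this pair is realized by an admissible control, so the feasible set of measures is closed under this convergence, completing the proof of sequential compactness. The step I expect to be most delicate is Step 4's passage to the limit for the $\calA h$ term, because $\calA h$ is unbounded when $h \notin \calC^2_b$-with-compact-support — more precisely the second-order coefficients $a_{ij}$ grow and must be controlled against the uniform moment bound; a standard remedy is to note $\partial h/\partial x_i$ and $\partial^2 h/\partial x_i \partial x_j$ vanish outside a compact set by definition of $\calC^2_b(\X)$, so $\calA h$ is in fact bounded and continuous with compact support, and the limit passage is immediate — I would double-check this against the paper's definition of $\calC^2_b$ and invoke it directly.
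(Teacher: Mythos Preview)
Your overall architecture --- compactify, extract a limit, show no mass at infinity, verify feasibility via Lemma \ref{lem:exi} --- matches the paper's. But your mechanism in Steps 2--3 for ruling out escape of mass diverges from the paper's and has two related gaps. First, your Step 2 claim that $\sup_\Gamma \mu_i^\Gamma(\X\times\G) < \infty$ does not follow from $\int g_i\,d\mu_i^\Gamma \le C$ together with $g_i(x)\to\infty$: Assumption $(A_5)$ permits $g_i$ (and the merged $f_i$) to vanish on a compact set --- this is stressed in Remark \ref{queueing-model-cost} as a feature of the queueing model --- so the cost bound only controls $\mu_i^\Gamma(\{|x|>R\}\times\G)$, not the total mass. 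Second, your Step 3 handles the $x$-marginal of $\mu_i$ via $g_i(x)\to\infty$ but leaves the $\G$-direction admittedly ``vague''; since $g_i$ does not depend on $y$, the same Markov-type argument cannot be recycled, and the suggested fallback of testing \eqref{mar_prm} against well-chosen $h$ is not developed and is not how the paper proceeds.

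The paper sidesteps both issues with a different device. It works on the one-point compactification $\overline{\X\times\G}$, extracts a subsequential limit $\bar\mu_i$, decomposes it into its $\X\times\G$-part plus a point mass at $z_\infty$, and kills the latter by an $\alpha$-trick: for arbitrary $\alpha>0$ one constructs continuous bounded $\ell^m\nearrow\ell$ and $g^m\nearrow g$ taking the value $2U_{(i)}+\alpha$ at the point at infinity, passes to the limit in the cost bound $J(\Gamma_n)\le 2U_{(i)}$, and obtains an inequality in which $\alpha$ multiplies the mass at infinity --- forcing that mass to be zero. Because the one-point compactification identifies escape in $x$ with escape in $y$, this handles both directions at once, and it delivers $J\le 2U_{(i)}$ for the limit as a byproduct (which you also need but omit: your Step 4 checks admissibility but not membership in $\calG_{(i)}'$). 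Your Step 4 observation that $\calA h$ and $\calB_i h$ have compact support, so the passage to the limit in \eqref{mar_prm} is immediate, is correct and is exactly what the paper uses.
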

\begin{proof}[Proof of Proposition \ref{measure:compact:min}]
  
The proof in Case (i) will follow a slightly different technique. Briefly speaking, the unboundedness of $\ell$ makes the integral fail in convergence even if the measure converges in the weak sense. Hence, we use a one-point compactification method.

\vspace{5pt}
\noindent{{\it Step 1: One-point compactification.}}
Consider an arbitrary  sequence of measures $\{\nu^n,\{\mu^n_i\}_i)\}_n\subset \{(\nu^\Gamma,\{\mu^\Gamma_i\}_i)\}_{\Gamma\in\calG_{(i)}'}$. We will now compactify the spaces, and trivially extend the measures. Let us consider $\bar \X$, the one-point compactification of $\X$, and denote this point by $x_\iy$. Extend $\{\nu^n\}_n$ to $\{\bar\nu^n\}_n$ on $\bar \X$, in the following way: for $A \in \bar \X$, $\bar\nu^n(A) = \nu^n(A\cap\X)$. Similarly, let $\overline{{\X}\times\G}$ be the one-point compactifications of ${\X}\times\G$, denoting this point by $z_\infty$.
Extend $\{\mu^n_{i}\}_{n,i}$ to $\{\bar \mu^n_{i}\}_{n,i}$ on $\overline{{\X}\times\G}$,
in the following way: for $A \in \overline{{\X}\times\G}$, $\bar \mu^n_{i}(A) = \mu^n_{i}(A\cap({\X}\times\G))$.

Since the space $\bar \X$ is compact, the family $\{\bar\nu^n\}_n$ is tight. Therefore, by Prokhorov's theorem, by going to a subsequence if necessary, there exists a limiting measure, $\bar\nu\in\calP(\bar \X)$, such that $\bar\nu^n \to \bar\nu$ in the sense of weak convergence. Decompose $\bar\nu$ as $\bar\nu = (1-\rho)\nu +\rho \delta_{\{x_\iy\}}$, where $\nu \in \calP(\X)$. Similarly, the families $\{\bar \mu^n_{i}\}_{n}$ are tight. By going to a subsequence if necessary, there exist $\bar\mu_i \in\calM(\overline{\X \times\G})$ for each $i$, such that $\bar \mu^n_{i} \to \bar\mu_i$ in the sense of weak convergence. Decompose $\bar \mu_i$ as $\bar \mu_i = (1-\tilde\rho)\mu_i +\tilde\rho \mu_{i,\iy}$.

In order to conclude the sequential compactness we need to show that $\rho = \tilde \rho = 0$, which means $\bar \nu =\nu\in \calP(\X)$ and $\bar \mu_i=\mu_i\in\calM({\X \times\G})$, and that there exists a control $\Gamma'\in\calG_{(ii)}'$ such that $\nu=\nu^{\Gamma'}$ and $\mu=\mu^{\Gamma'}$.

\vspace{5pt}
\noindent{\it Step 2: Showing $\rho=\tilde \rho=0$ and $\nu=\nu^{\Gamma'}$ and $\mu=\mu^{\Gamma'}$ for some ${\Gamma'}\in\calG_{(i)}'$.} First, notice that by \eqref{cost:bound:2}, we have 
\begin{align*}
    \sup_{\Gamma\in\calG_{(i)}'} J(\Gamma) \leq 2U_{(i)},
\end{align*}
which implies
\begin{align}\label{case:ii:2beta}
    \limsup_{n\to\iy} J(\Gamma_n) \leq 2U_{(i)},
\end{align}
where $\Gamma_n$ are the controls associated with $\nu^n$ and $\mu^n$. Notice that by Assumption $(A_5)$, we have
\begin{align}
    &\liminf_{|x|\to\infty}\ell(x)>2U_{(i)} + \al,\label{ell:beta:eps}\\
    &\liminf_{|x|\to\infty}h_i(x)>2U_{(i)} + \al, \quad \forall i \in \lbrs{d_2+d_3},\label{h:beta:eps}
\end{align}
for any $\al > 0$.

Construct two sequences of continuous real-valued functions on $\overline{\X}$, $\{\ell^m\}_{m\ge1}$ and $\{g^m\}_{m\ge1}$, such that, $\ell^m \nearrow \ell$ on $\X$, $g^m \nearrow g$ on $\X\times\G$, and in addition 
\begin{align*}
  \ell^m(x_\infty) = 2U_{(i)}+\al\qquad\text{and}\qquad   g^m_i(x_\infty) = (2U_{(i)}+\al), \quad \forall i \in \lbrs{d_2+d_3}, \quad \forall m.
\end{align*}
 Such construction of $\ell^m$ and $g^m$, which are dominated by $\ell$ and $g$ yet have their respective limit, is possible by \eqref{ell:beta:eps} and \eqref{h:beta:eps}. For simplicity, let us denote
\begin{align*}
    g_\iy = (g^1_1(x_\iy),\dots,g^1_{d_2+d_3}(x_\iy)).
\end{align*}
Since $\bar\nu^n=\nu^n$ on $\X$ and $\bar\mu^n_i=\mu^n_i$ on $\X\times\mathbb G$, we have for all $n$,
\begin{align}\notag
    \int_{\overline{\X}} \ell^m(x)\bar\nu^n(dx)&\leq\int_{\X} \ell(x)\nu^n(dx),\\
    \notag
    \int_{\overline{\X\times\G}}g^m(x)\cdot\bar\mu^n(dx,dy)&\leq\int_{\overline{\X\times\G}}g(x)\cdot\mu^n(dx,dy).
\end{align}
As a result, 
\begin{align*}
    &(1-\rho)\int_{\X} \ell^m(x)\nu(dx) + \rho(2U_{(i)}+\al) +(1-\tilde \rho)\int_{\X\times \G}g^m(x)\cdot\mu(dx,dy)+\tilde \rho g_\iy\cdot\mu_\iy(z_\iy)
    \\&\quad= \int_{\overline{\X}} \ell^m(x) \bar\nu(dx) + \int_{\overline{{\X}\times\G}}g^m(x)\cdot\bar\mu(dx,dy)\\&\quad=\lim_{n\to\infty}\Big[\int_{\overline{\X}} \ell^m(x)\bar\nu^n(dx)+ \int_{\overline{{\X}\times\G}}g^m(x)\cdot\bar\mu^n(dx,dy)\Big]\\&\quad\leq \limsup_{n\to\infty}\Big[\int_{\X} \ell(x)\nu^n(dx)+ \int_{{\X}\times\G}g(x)\cdot\mu^n(dx,dy)\Big] \leq 2U_{(i)},
\end{align*}
Taking $m \to\infty$, we get,

\begin{align}\label{2rho:leq:2beta}
    (1-\rho)\int_{\X} \ell(x)\nu(dx) + \rho(2U_{(i)}+\al) +(1-\tilde \rho)\int_{\X\times \G}g(x)\cdot\mu(dx,dy)+\tilde \rho g_\iy\cdot\mu_\iy(z_\iy) \leq 2U_{(i)}.
\end{align}
Notice that the above inequality holds true independent of $\al > 0$ and also, since $\ell$ and $g$ are bounded from below, we have $\int_{\X} \ell(x)\nu(dx) >-\infty$ and $\int_{\X\times \G}g(x)\cdot\mu(dx,dy)>-\infty$. Therefore, we must have $\rho = \tilde \rho = 0$, otherwise we can choose $\al$ large enough so the inequality is violated. As a result, we also have,
\begin{align}\label{case:ii:leq:2beta}
    \int_{\X} \ell(x)\nu(dx)+\int_{\X\times \G}g(x)\cdot\mu(dx,dy)\leq2U_{(i)},
\end{align}
In conclusion, we have shown that $\rho=\tilde\rho=0$ and \eqref{case:ii:leq:2beta}. Now, recall that our test functions $h\in\calC^2_b(\X)$ have compact supports of first and second derivatives. Therefore, since $\rho=\tilde\rho=0$ we have:
\begin{align*}
  &\int_{{\X}}\calA h(x)\nu(dx)+\sum_{i=1}^{d_2+d_3}\int_{{\X}\times\G}(\calB_i h)(x,y)\mu_{i}(dx,dy)\\&\quad=\int_{\overline{\X}}\calA \bar h(x)\bar\nu(dx)+\sum_{i=1}^{d_2+d_3}\int_{\overline{\X\times\G}}(\calB_i \bar h)(x,y)\bar\mu_{i}(dx,dy)\\
  &\quad=0,  
\end{align*}
where $\bar h$ is an extension of $h$ such that $\bar h(x_\iy) = 0$.
Then, by Lemma \ref{lem:exi}, there exists a process $X$ that has the stationary distribution $\nu^{\Gamma'}=\nu$ and occupation measure $\{\mu^{\Gamma'}_i\}_i=\{\mu_i\}_i$. In other words, $\nu$ and $\mu$ are 
associated with some ${\Gamma'}\in\calG_{(i)}$. We are left to show that ${\Gamma'}\in\calG_{(i)}'$, which is given by \eqref{case:ii:leq:2beta}. Therefore, the relaxed control $\Gamma' \in \calG_{(i)}'$ is indeed admissible, and we are done.
\end{proof}

We are now ready to prove the main theorem for the single-agent control problem in Case (i). 
\subsection{Proof of Theorem \ref{thm:main:min}}\label{sec:35:min}
Notice that  instead of \eqref{case:ii:2beta}, we have here,
\begin{align*}
    \lim_{n\to\iy} J(\Gamma_n) = U_{(i)}.
\end{align*}
Then, we construct the functions $\ell^m$ and $g^m$ as given in the proof of  Proposition \ref{measure:compact:min}, with the small adaptation of setting the limit to be to $U_{(i)}+\al$. The rest of the proof follows the same lines as in the proof of  Proposition \ref{measure:compact:min} and instead of \eqref{case:ii:leq:2beta}, we have
\begin{align*}
    \int_{\X} \ell(x)\nu^\Gamma(dx)+\int_{\X\times \G}g(x)\cdot\mu^\Gamma(dx,dy) \leq U_{(i)}.
\end{align*}
But $U_{(i)}$ is the infimum, so we actually have
\begin{align*}
    \int_{\X} \ell(x)\nu^\Gamma(dx)+\int_{\X\times \G}g(x)\cdot\mu^\Gamma(dx,dy) = U_{(i)}.
\end{align*}
These show that $U_{(i)}$ is achieved under the control $\Gamma$.

\qed

\section{Proof of Theorem \ref{thm:main:max}}\label{sec3:max}
This section is devoted to the proof of Theorem \ref{thm:main:max}, which is the main result of the single-agent control problem in Case (ii). The proof is constructed in the same way as for Case (i) in Section \ref{sec3:min}. For completeness, we present all the necessary lemmas and omit the proofs when they are identical to those in the previous section.

\subsection{Characterization for admissible controls}\label{sub32:max}
As in Section \ref{sub32:min}, the next two lemmas characterize the relationship between admissible controls (excluding Condition (d) in Definition \ref{def21:max}) and stationary distribution using the generators $\calA$ and $\calB$, derived by a martingale problem. Notice that $d_2 = 0$ in Case (ii).
The proofs of the following two Lemmas are the same as the proofs for Lemmas \ref{lem:exo} and \ref{lem:exi} for Case (i).

\begin{lemma}\label{lem:exo:max}
Let $\Gamma$ be an admissible control with finite first moment, namely, $\EE[\Gamma(\X\times\G\times[0,1])]<\iy$ and for which the stationary distribution for state process $X^\Gamma$ is denoted by $\nu^\Gamma\in\calP({\X})$. Under Assumption 
$(B_2)$, there exist measures $ \{\mu_i^\Gamma\}_{i\in\lbrs{d_3}}\subset\calM({\X}\times\G)$, such that for all $h\in\calC^2_b({\X})$,  
\begin{align}\notag
\int_{{\X}}\calA h(x)\nu^\Gamma(dx)+\sum_{i=1}^{d_3}\int_{{\X}\times\G}(\calB_i h)(x,y)\mu_i^\Gamma(dx,dy)=0.
\end{align}
\end{lemma}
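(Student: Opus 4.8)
The plan is to mirror the proof of Lemma \ref{lem:exo}, which is considerably simplified here because Case (ii) has $d_2=0$: there is no intrinsic singular component $\Phi$, so only the control measures $\{\Gamma_i\}_{i\in\lbrs{d_3}}$ enter. First I would define, for each $i\in\lbrs{d_3}$ and all Borel $A\subseteq\X$, $B\subseteq\G$, the candidate occupation measure $\mu_i^\Gamma(A\times B):=\EE[\Gamma_i(A\times B\times[0,1])]$. Each $\mu_i^\Gamma$ is a genuine finite Borel measure on $\X\times\G$: countable additivity passes through the expectation by monotone convergence, and finiteness holds since $\mu_i^\Gamma(\X\times\G)=\EE[\Gamma_i(\X\times\G\times[0,1])]<\iy$, which is exactly the finite-first-moment hypothesis on $\Gamma$.

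Next, fix an arbitrary $h\in\calC^2_b(\X)$. By admissibility (Definition \ref{def21:max}, which inherits conditions (b)--(c) of Definition \ref{def21:min} with $\Phi\equiv 0$ and $d_2=0$), the process
\[
\calM^{\Gamma,h}_t=h(X_t)-\int_0^t(\calA h)(X_s)\,ds-\sum_{i=1}^{d_3}\int_{\X\times\G\times[0,t]}(\calB_i h)(x,y)\,\Gamma_i(dx,dy,ds)
\]
is an $\calF_t$-martingale, so $\EE[\calM^{\Gamma,h}_1]=\EE[\calM^{\Gamma,h}_0]=\EE[h(X_0)]$. Since $X$ is stationary, $\calL(X_1)=\calL(X_0)=\nu^\Gamma$, hence $\EE[h(X_1)]=\EE[h(X_0)]$ and these boundary terms cancel once I write out $\EE[\calM^{\Gamma,h}_1]$. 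It then remains to move the expectation inside the two remaining integrals. Because $h\in\calC^2_b(\X)$ is bounded with first derivative of compact support, $\calA h$ is bounded with compact support (local boundedness of $a$ handles the second-order term and the compact support of $\nabla h$ the drift term), and each $\calB_i h$ is bounded; together with $\EE[\Gamma_i(\X\times\G\times[0,1])]<\iy$, Fubini's theorem plus stationarity ($\calL(X_s)=\nu^\Gamma$ for each $s$) give
\[
\EE\Big[\int_0^1(\calA h)(X_s)\,ds\Big]=\int_\X(\calA h)(x)\,\nu^\Gamma(dx),\qquad
\EE\Big[\int_{\X\times\G\times[0,1]}(\calB_i h)(x,y)\,\Gamma_i(dx,dy,ds)\Big]=\int_{\X\times\G}(\calB_i h)(x,y)\,\mu_i^\Gamma(dx,dy),
\]
the last identity using that $\calB_i h$ does not depend on $s$ and the definition of $\mu_i^\Gamma$. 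Substituting into $0=\EE[\calM^{\Gamma,h}_1]-\EE[h(X_0)]$ yields the asserted identity, and since $h$ was arbitrary, the proof is complete.

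I expect no real obstacle: the only point requiring a word of care is the integrability justification for Fubini, i.e.\ the boundedness of $\calB_i h$. Near $y_i=0$ this follows from a mean-value estimate together with the extension convention $(\calB_i h)(x,0)=-\gamma_i^T(x)\nabla h(x)$ (which, via $(B_2)$, is controlled), while for large $|y|$ one simply uses $|(\calB_i h)(x,y)|\le 2\|h\|_\infty/y_i$. Everything else is routine and the argument is verbatim that of Lemma \ref{lem:exo} with the $\Phi$-terms deleted.
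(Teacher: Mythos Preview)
Your proposal is correct and follows exactly the paper's approach: the paper explicitly states that the proof of Lemma \ref{lem:exo:max} is the same as that of Lemma \ref{lem:exo}, which in turn defines $\mu_i^\Gamma(A\times B)=\EE[\Gamma_i(A\times B\times[0,1])]$ and obtains the identity by taking expectations in the martingale equation and invoking stationarity. If anything, your write-up is more detailed than the paper's one-line argument, spelling out the Fubini justification that the paper leaves implicit.
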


\begin{lemma}\label{lem:exi:max}
Let Assumption 
$(B_2)$ be in force. Let $\nu\in\calP({\X})$ and $ \{\mu_i\}_{i\in\lbrs{d_3}}\subset\calM({\X}\times\G)$, be such that for all $h\in\calC^2_b({\X})$,  
\begin{align}\label{eq:lem32:max}
\int_{{\X}}\calA h(x)\nu(dx)+\sum_{i=1}^{d_3}\int_{{\X}\times\G}(\calB_i h)(x,y)\mu_i(dx,dy)=0.
\end{align}
Let $\mu^{\X}_i$ be the state-marginal of $\mu_i$ and let $\eta_i$ be the transition function from ${\X}$ to $\G$ such that $\mu_i(dx,dy)=\eta_i(x,dy)\mu^{\X}_i(dx)$. Then, there exist a process $X$ and  random measures $\{\hat\Gamma_i\}_{i\in\lbrs{d_3}}$ on ${\X}\times\R_+$, adapted to $\calF_t$, such that:
\begin{itemize}
\item $X$ is stationary and for any $t\in\R_+$ $X(t)$ has the distribution $\nu$;
\item $\hat\Gamma_i$ has stationary increments, $\hat\Gamma_i({\X}\times[0,t])$ is finite for any $t\in[0,1]$, and $\E[\hat\Gamma_i(\cdot\times[0,t])]=t\mu_i^{\X}(\cdot)$;
\item for any $h\in\calC^2_b({\X})$, the process $(N^h_t)_{t\in[0,1]}$ is an $\calF_t$-martingale, where
\begin{align}\notag
N^h_t:= h(X_t)-\int_0^t\calA h(X_s)ds-\sum_{i=1}^{d_3}\int_{{\X}\times[0,t]}\int_\G\calB_i h(x, y)\eta_i(x,dy)\hat\Gamma_i(dx,ds).
\end{align}
\end{itemize}
In particular, set $\Gamma_i(dx,dy,ds) := \eta_i(x,dy)\hat\Gamma_i(dx,ds)$ for $i \in \lbrs{1,d_3}$. Then, if condition (d) from Definition \ref{def21:max} holds, $\Gamma$ is an admissible control. As a result, for any $C\subseteq\X\times\G$,
\begin{align}\label{eq:mu-gamma:max}
\mu_i(C)=\EE[\Gamma_i(C\times[0,1])].
\end{align}
\end{lemma}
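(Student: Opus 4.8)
The proof follows, step for step, the argument given for Lemma~\ref{lem:exi}, specialized to $d_2 = 0$ so that only the control family $\{\mu_i\}_{i\in\lbrs{1,d_3}}$ appears; I outline the plan. First I would aggregate the occupation measures: put $\tilde\mu := \sum_{i=1}^{d_3}\mu_i \in \calM(\X\times\G)$ with state-marginal $\tilde\mu^{\X}$, note that each $\mu_i$ (resp.\ $\mu_i^{\X}$) is absolutely continuous with respect to $\tilde\mu$ (resp.\ $\tilde\mu^{\X}$), and set $\zeta_i := d\mu_i/d\tilde\mu$ and $\tilde\zeta_i := d\mu_i^{\X}/d\tilde\mu^{\X}$ (both bounded by $1$), together with the aggregated generator $\tilde\calB h(x,y) := \sum_{i=1}^{d_3}\zeta_i(x,y)(\calB_i h)(x,y)$. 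By these definitions and \eqref{eq:lem32:max}, the pair $(\calA,\tilde\calB)$ satisfies $\int_\X \calA h\,d\nu + \int_{\X\times\G}\tilde\calB h\,d\tilde\mu = 0$ for every $h\in\calC^2_b(\X)$.

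Next I would check that $\calA$ and $\tilde\calB$ satisfy hypotheses (i)--(v) of \cite[Condition~1.2]{kur-sto}, exactly as in the proof of Lemma~\ref{lem:exi}: (i) is clear since $1\in\calC^2_b(\X)$ and $\calA 1 = \tilde\calB 1 = 0$; for (ii) one takes the weight functions identically $1$ and uses that $\calA h$ and $\tilde\calB h$ are bounded for $h\in\calC^2_b(\X)$ (Assumption~$(B_2)$ controls the second-order part of $\calA h$ on the compact support of the Hessian, the first-order part is bounded since $\nabla h$ has compact support, and each $\calB_i h$ is bounded via the standard extension at $y_i=0$ while $0\le\zeta_i\le1$); (iii) follows from the separability of $\calC^2_b(\X)$; (iv) is vacuous since there is no regular-control component; and (v) is the consistency identity displayed above, valid because $\calC^2_b(\X)$ separates points and is closed under products. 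Then \cite[Theorem~1.7]{kur-sto}, applied with state space $\X = \R^d_+$, yields a stationary process $X$ with $\calL(X_t)=\nu$ and a random measure $\tilde\Gamma$ on $\X\times[0,1]$ with stationary increments, $\E[\tilde\Gamma(\cdot\times[0,t])] = t\tilde\mu^{\X}(\cdot)$, and such that $\tilde N^h_t := h(X_t) - \int_0^t\calA h(X_s)\,ds - \int_{\X\times[0,t]}\int_\G\tilde\calB h(x,y)\,\tilde\eta(x,dy)\,\tilde\Gamma(dx,ds)$ is an $\calF_t$-martingale, $\tilde\eta$ being the disintegration kernel of $\tilde\mu$.

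I would then disaggregate: define $\hat\Gamma_i(A\times I) := \int_{A\times I}\tilde\zeta_i(x)\,\tilde\Gamma(dx,ds)$, which is finite (as $\tilde\zeta_i\le1$), has stationary increments, and satisfies $\E[\hat\Gamma_i(\cdot\times[0,t])] = t\mu_i^{\X}(\cdot)$ by Fubini. Setting $\Gamma_i(dx,dy,ds) := \eta_i(x,dy)\hat\Gamma_i(dx,ds)$, the goal is to show that $N^h_t := h(X_t) - \int_0^t\calA h(X_s)\,ds - \sum_{i=1}^{d_3}\int_{\X\times\G\times[0,t]}\calB_i h(x,y)\,\Gamma_i(dx,dy,ds)$ is an $\calF_t$-martingale, which reduces to the pathwise identity $\sum_i\int_{\X\times[s,t]}\int_\G\calB_i h\,\eta_i(x,dy)\hat\Gamma_i(dx,du) = \int_{\X\times[s,t]}\int_\G\tilde\calB h\,\tilde\eta(x,dy)\tilde\Gamma(dx,du)$ a.s. The step I expect to require the most care---though it is verbatim the corresponding step in the proof of Lemma~\ref{lem:exi}---is upgrading the $\tilde\mu^{\X}$-a.e.\ identity $\int_\G\calB_i h\,\eta_i(x,dy)\tilde\zeta_i(x) = \int_\G\calB_i h\,\zeta_i(x,y)\tilde\eta(x,dy)$ to a $\tilde\Gamma$-a.s.\ identity for $\PP$-a.e.\ realization: if the two sides disagree on a set $H\subseteq\X$, then $\tilde\mu^{\X}(H)=0$, so $\E[\tilde\Gamma(H\times[s,t])] = (t-s)\tilde\mu^{\X}(H) = 0$, and nonnegativity of $\tilde\Gamma(H\times[s,t])$ forces it to vanish $\PP$-a.s.

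Finally I would conclude. Conditions (a), (b), (c) of Definition~\ref{def21:max} (with $\Phi\equiv0$, $d_2=0$) hold for the constructed tuple by the previous steps---(b) is the martingale property just established, while (a) and (c) are the stationarity and stationary-increments properties---so that, once the hypothesis that $\nu$ satisfies condition (d) of Definition~\ref{def21:max} is added, $\Gamma$ is an admissible control in Case~(ii). The identity \eqref{eq:mu-gamma:max} is then read off from $\E[\hat\Gamma_i(\cdot\times[0,1])] = \mu_i^{\X}(\cdot)$ together with the disintegrations $\Gamma_i(dx,dy,ds) = \eta_i(x,dy)\hat\Gamma_i(dx,ds)$ and $\mu_i(dx,dy) = \eta_i(x,dy)\mu_i^{\X}(dx)$.
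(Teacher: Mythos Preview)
Your proposal is correct and follows precisely the approach of the paper: the paper states that the proof of Lemma~\ref{lem:exi:max} is identical to that of Lemma~\ref{lem:exi} (with $d_2=0$), and you reproduce exactly that argument---aggregating the $\mu_i$ into a single $\tilde\mu$ via Radon--Nikodym derivatives $\zeta_i,\tilde\zeta_i$, verifying \cite[Condition~1.2]{kur-sto}, applying \cite[Theorem~1.7]{kur-sto}, and then disaggregating $\tilde\Gamma$ into the $\hat\Gamma_i$ while upgrading the $\tilde\mu^{\X}$-a.e.\ identity to a $\tilde\Gamma$-a.s.\ one through $\E[\tilde\Gamma(H\times[s,t])]=(t-s)\tilde\mu^{\X}(H)$.
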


\begin{remark}\label{rem:GammaToY}
    Notice we have not shown yet that Condition (d) from Definition \ref{def21:max} holds (which is only necessary for Case (ii)). However, when we use this lemma in the sequel (specifically, in the proof of Proposition \ref{measure:compact:max}, in Subsection \ref{sec:35:max}) we will establish the admissibility of this control $\Gamma$. In other occasions when we use this lemma, the admissibility conditions are already held by construction or assumption. Conditions (a)-(c) of Definition \ref{def21:max} are satisfied in any case, and in particular, an adapted $\Gamma$ can be constructed, see \cite[Remark 1.8, Lemma 6.1]{kur-sto}.
\end{remark}

\subsection{Finiteness of the optimum}\label{sub33:max}
We now show the finiteness of the optimum (values) in Case (ii).

\begin{lemma}\label{ass:near}
Let Assumptions \ref{ass:21:max}, \ref{k_matrix}, and \ref{ass:22:max} be in force. The following bound holds: 
$$U_{(ii)} = \sup_{\Gamma\in\calG_{(ii)}}\Big\{\int_{\X} \ell(x)\nu^\Gamma(dx)+ \int_{\X\times \G}g(x)\cdot\mu^\Gamma(dx,dy)\Big\} < \infty.$$
\end{lemma}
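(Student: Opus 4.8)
The plan is to bound the reward $J(\Gamma)$ uniformly over all admissible controls $\Gamma \in \calG_{(ii)}$, exploiting that the running reward $\ell$ and the singular reward $g$ are bounded (Assumption $(B_6)$). The term $\int_\X \ell(x)\nu^\Gamma(dx)$ is immediately bounded by $\|\ell\|_\infty$ since $\nu^\Gamma$ is a probability measure, so the entire difficulty is in bounding $\int_{\X\times\G} g(x)\cdot\mu^\Gamma(dx,dy) \le \|g\|_\infty \sum_i \mu_i^\Gamma(\X\times\G)$, i.e., in showing $\EE[\Gamma(\X\times\G\times[0,1])] = \sum_i \mu_i^\Gamma(\X\times\G)$ is bounded uniformly in $\Gamma$. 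This is where Assumption \ref{k_matrix} ($B_4$), the nondegeneracy $\gamma_{i,j}(x) \ge \bar\gamma > 0$, and the Lyapunov bound from Remark \ref{rem:23} come in.

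First I would apply the martingale characterization \eqref{mar_prm} (equivalently Lemma \ref{lem:exo:max}) to a suitable test function, or rather to the Lyapunov function $\Vfour(x) = \sum_{i=1}^d x_i^{4r}$ from Remark \ref{remark21} (with an appropriate truncation/localization argument to legitimize using $\Vfour \notin \calC^2_b$, approximating by $\calC^2_b$ functions and passing to the limit using the uniform bound $\int_\X \Vfour \, d\nu^\Gamma \le k_1/k_2$ of \eqref{Vfour}). This yields
\begin{align*}
\int_\X \calA\Vfour(x)\,\nu^\Gamma(dx) + \sum_{i=1}^{d_3}\int_{\X\times\G}(\calB_i\Vfour)(x,y)\,\mu_i^\Gamma(dx,dy) = 0.
\end{align*}
By the Lyapunov inequality \eqref{eq:lyapunov2}, $\calA\Vfour(x) \le k_1 - k_2\Vfour(x)$, so $\int_\X \calA\Vfour\,d\nu^\Gamma \le k_1$. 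The key point is then that each $\calB_i\Vfour$ is strictly negative and bounded away from zero in a way that controls the mass of $\mu_i^\Gamma$: since the control acts downward via $-\gamma(x)y$ and $\Vfour$ is increasing and convex on $\R_+^d$, we have $(\calB_i\Vfour)(x,y) = \frac{\Vfour(x-\gamma(x)y) - \Vfour(x)}{y_i} \le -\gamma_i^T(x)\nabla\Vfour(x)$ (for $y$ in the relevant range, using monotonicity of $\Vfour$ along the direction $-\gamma(x)y$), and one needs a quantitative lower bound of the form $-(\calB_i\Vfour)(x,y) \ge c$ for some $c > 0$ outside a compact set, or more precisely $-(\calB_i\Vfour)(x,y) \ge c_1|x|^{4r-1} - c_2$ using $(B_4)$ and the explicit form of $\nabla\Vfour$. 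Combining, $\sum_i \int (-\calB_i\Vfour)\,d\mu_i^\Gamma = \int \calA\Vfour\,d\nu^\Gamma \le k_1$, and since the integrand on the left is bounded below by a positive constant (after absorbing the compact region, on which $\mu_i^\Gamma$-mass is separately controlled — this is the subtle point, handled by choosing the Lyapunov exponent $K_r = 4r$ large enough relative to $r$), we conclude $\sum_i \mu_i^\Gamma(\X\times\G) \le C$ uniformly in $\Gamma$.

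The main obstacle is the last step: $-(\calB_i\Vfour)(x,y)$ could in principle be small when $x$ is near the origin or when $y$ is small, so one cannot directly lower-bound it by a positive constant everywhere. The fix is to split $\X\times\G$ into a compact set $\calK$ and its complement; on $\calK^c$ the bound $-(\calB_i\Vfour) \ge c > 0$ holds via $(B_4)$ and the growth of $\nabla\Vfour$, giving $\mu_i^\Gamma(\calK^c) \le k_1/c$; and on $\calK$ one must argue separately — here I would instead use a modified Lyapunov function (or add a linear term) so that $-\calB_i\Vfour$ is bounded below by a positive constant on all of $\X \times (\G\setminus\{0\})$, exploiting that $\gamma_{ij} \ge \bar\gamma$ forces a genuine decrease; the contribution of $\mu_i^\Gamma$ near $\{y = 0\}$ is harmless since $g$ is bounded and such mass corresponds to control of arbitrarily small effect but is still counted with bounded density. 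Once $\sup_\Gamma \sum_i\mu_i^\Gamma(\X\times\G) < \infty$ is established, the bound $U_{(ii)} \le \|\ell\|_\infty + \|g\|_\infty \sup_\Gamma \sum_i \mu_i^\Gamma(\X\times\G) < \infty$ follows immediately, and finiteness from below is trivial since the trivial control (admissible by Example \ref{V_int:max}) gives $J \ge -\|\ell\|_\infty > -\infty$, so $U_{(ii)}$ is finite.
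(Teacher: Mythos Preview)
Your overall strategy---bound the $\ell$-term trivially by $\|\ell\|_\infty$, then show $\sup_\Gamma\sum_i\mu_i^\Gamma(\X\times\G)<\infty$ via the martingale identity, Assumption $(B_4)$, and the Lyapunov bound \eqref{Vfour}---is exactly the paper's strategy. The difference, and the place where your proposal has a genuine gap, is the choice of test function.

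You plug $\Vfour(x)=\sum_j x_j^{4r}$ into the stationary identity and try to lower-bound $-\calB_i\Vfour(x,y)$. But $\nabla\Vfour(x)$ vanishes at the origin, so $-\calB_i\Vfour(x,y)$ can be arbitrarily small for $x$ near $0$; your bound $\sum_i\int(-\calB_i\Vfour)\,d\mu_i^\Gamma\le k_1$ then controls only $\int |x|^{4r-1}\,d\mu_i^\Gamma$, not the total mass $\mu_i^\Gamma(\X\times\G)$. Your proposed split into a compact set $\calK$ and its complement is circular, since bounding $\mu_i^\Gamma(\calK)$ is precisely the issue. You do mention ``adding a linear term'' as a fix, and that is exactly the right move---but it is not carried out.

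The paper instead takes the (localized) linear function $h_n(x)=x_1$ on $\{|x|\le 2n\}$. Then for $y_i>0$, $\calB_i h_n(x,y)=-\gamma_1(x)y/y_i\le -\gamma_{1,i}(x)\le -\bar\gamma$ directly from $(B_4)$, and the same bound holds at $y_i=0$. This gives the uniform lower bound $-\calB_i h_n\ge\bar\gamma$ everywhere, with no trouble near the origin. Applying the martingale property of $M^{\Gamma,h_n}$ at the stopping time $\tau_n=\inf\{t:|X_t^\Gamma|\ge n\}\wedge 1$ and letting $n\to\infty$ yields
\[
d_3\bar\gamma\sum_{i=1}^{d_3}\mu_i^\Gamma(\X\times\G)\le \int_\X\big(|x_1|+|\beta_1(x)|\big)\nu^\Gamma(dx),
\]
and the right-hand side is bounded uniformly in $\Gamma$ because $|x_1|+|\beta_1(x)|=o(\Vfour(x))$ and $\int_\X\Vfour\,d\nu^\Gamma\le k_1/k_2$ by \eqref{Vfour}. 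So the Lyapunov function enters only to control the drift term on the right, not as the test function itself.
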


\begin{proof}
First, notice that
    \begin{align*}
        &\sup_{\Gamma\in\calG_{(ii)}}\Big\{\int_{\X} \ell(x)\nu^\Gamma(dx)+  \int_{\X\times \G}g(x)\cdot\mu^\Gamma(dx,dy)\Big\} \\
        &\quad\leq \sup_{\Gamma\in\calG_{(ii)}}\int_{\X} \ell(x)\nu^\Gamma(dx)+ \sup_{\Gamma\in\calG_{(ii)}}\int_{\X\times \G}g(x)\cdot\mu^\Gamma(dx,dy).
    \end{align*}
    Therefore, it is sufficient to show that each of the terms is finite. The first term's finiteness is trivial, given that $\ell$ is bounded on $X$ by $(B_6)$. 
    For the second term, recall that $g$ is bounded by $(B_6)$ as well. Hence, we have
    \begin{align*}
        \sup_{\Gamma\in\calG_{(ii)}}\int_{\X\times \G}g(x)\cdot\mu^\Gamma(dx,dy) \leq \sup_{x\in \X} g(x) \cdot \sup_{\Gamma\in\calG_{(ii)}} \mu^\Gamma({\X}\times\G).
    \end{align*}
    Therefore it is sufficient to show that $\sup_{\Gamma\in\calG_{(ii)}}\mu^\Gamma({\X}\times\G)< \iy$. 
    Take an arbitrary $\Gamma$. Note we have 
    \begin{align}\label{eq:Y_mu}
        \mu_i^\Gamma(\X\times\G)=\EE\big[\Gamma_i(\X\times\G\times[0,1])\big].
    \end{align}
Define the stopping times $\tau_n:= \inf\{t\geq 0, |X_t^\Gamma|\geq n\}\wedge 1$. Notice that $|X_{\tau_n}^\Gamma|\leq n$, because the contribution due to the singular control is only downwards (i.e., reducing the norm) and we assume in Case (ii) that $\phi\equiv 0$. Define functions $h_n\in\calC^2_b(\X)$, such that for any $|x|\leq 2n$, we have $h_n(x) = x_1$.
This is the projection onto the first coordinate inside the ball $\{x:|x|\le 2n\}$. Therefore, $(M_t^{\Gamma, h_{n}})_{t\in[0,1]}$ is a martingale. Since $\tau_n$ is almost surely bounded by $1$, in particular, we have by the optional sampling theorem that
\begin{align*}
    \EE\big[M_{\tau_n}^{\Gamma, h_{n}}\big] = \EE\big[M_0^{\Gamma, h_{n}}\big].
\end{align*}
Now, let us calculate $M_{\tau_n}^{\Gamma, h_{n}}$. Notice that for $s \in [0,\tau_n]$, we have $|X_s^\Gamma| \leq n$, and therefore the function $h_n$ applies as the projection onto the first coordinate. We have
\begin{align*}
    M_{\tau_n}^{\Gamma, h_{n}} = X_{1,\tau_n}-\int_0^{\tau_n} \beta_1(X_s) ds -\sum_{i=1}^{d_3}\int_{\X\times\G\times[0,\tau_n]}\calB_ih_n(x,y)\Gamma_i(dx,dy,ds).
\end{align*}
Recall that in Case (ii), $d_2 = 0$; hence, the sum's upper limit is $d_2+d_3=d_3$. For $y_i > 0$, we have
\begin{align*}
    \calB_ih_n(x,y) = \frac{x_1-\gamma_1(x)y-x_1}{y_i} = \frac{-\gamma_1(x)y}{y_i},
\end{align*}
where $\gamma_1(x)$ is the first row of $\gamma(x)$. Notice by Assumption \ref{k_matrix}, and since $y \geq 0$ component-wise, we have
\begin{align*}
    {\bar\gamma} \leq \gamma_{1,i}(x) = \frac{\gamma_{1,i}(x)y_i}{y_i}\leq \frac{\gamma_1(x)y}{y_i} = -\calB_ih_n(x,y).
\end{align*}
On the other hand, for $y_i = 0$, we still have
\begin{align*}
    {\bar\gamma} \leq \gamma_{i,1}^T(x) = \gamma_i^T(x)\cdot \nabla h_n(x) = -\calB_ih_n(x,y).
\end{align*}
As a result,
\begin{align*}
d_3{\bar\gamma}\sum_{i=1}^{d_3}\Gamma_i(\X\times\G\times[0,\tau_n]) 
&\leq -\sum_{i=1}^{d_3}\int_{\X\times\G\times[0,\tau_n]}\calB_ih_n(x,y)\Gamma_i(dx,dy,ds) \\
&= M_{\tau_n}^{\Gamma, h_{n}} - X_{1,\tau_n}+\int_0^{\tau_n} \beta_1(X_s) ds.
\end{align*}
This in turn implies
\begin{align*}    d_3{\bar\gamma}\sum_{i=1}^{d_3}\EE\big[\Gamma_i(\X\times\G\times[0,\tau_n])\big] &\leq \EE\Big[M_{\tau_n}^{\Gamma, h_{n}} - X_{1,\tau_n}+\int_0^{\tau_n} \beta_1(X_s) ds\Big] \\&\leq\EE\Big[M_{\tau_n}^{\Gamma, h_{n}} +\int_0^{\tau_n} \beta_1(X_s) ds\Big] \\&\leq \EE\big[\big|M_0^{\Gamma, h_{n}}\big|\big]  + \EE\Big[\int_0^{\tau_n} |\beta_1(X_s)| ds\Big]\\&\leq \EE[|X_{1,0}|] + \EE\Big[\int_0^1 |\beta_1(X_s)| ds\Big] \\&=\int_{\X}  (|x_1|+|\beta_1(x)|)\nu^\Gamma(dx),
\end{align*}
where the second inequality is because $X_{1,\tau_n}\geq 0$ since for any $t\in[0,1]$, $X_t\in \R^d_+$; the fourth inequality follows by direct definition of $M_0^{\Gamma, h_{n}}$ and since $\tau_n\le 1$; the last equality is by stationarity of $X$. Now take $n \to \iy$, and by the monotone convergence theorem we have
\begin{align*}
    d_3{\bar\gamma}\sum_{i=1}^{d_3}\EE\big[\Gamma_i(\X\times\G\times[0,1])\big] \leq \int_{\X} (|x_1|+|\beta_1(x)|)\nu^\Gamma(dx).
\end{align*}
Recall by Assumption $(B_2)$ and the definition of $\bar V$ given in Remark \ref{remark21} (with $K_r=4r\ge 4$), we have that $|x_1| +|\beta_1(x)| \sim o(\Vfour)$, we get by \eqref{Vfour} that,
    \begin{align*}
    \sup_{\Gamma\in\calG_{(ii)}}\int_{\X}  (|x_1|+|\beta_1(x)|)\nu^\Gamma(dx) \leq \sup_{\Gamma\in\calG_{(ii)}}\int_{\X}\Vfour(x) \nu^\Gamma(dx)\leq \frac{k_1}{k_2}< \iy.
    \end{align*}
This gives
\begin{align}\label{Y:finite}
    \sup_{\Gamma\in\calG_{(ii)}}\sum_{i=1}^{d_3}\mu_i^\Gamma(\X\times\G) < \iy,
\end{align}
which concludes the proof.
\end{proof}

\subsection{Sequentially compactness of the controls' associated measures}\label{sec:34:max}

We now establish the sequentially compactness result in Case (ii). The proof of the proposition follows by Prokhorov's theorem, using tightness of measures.

\begin{proposition}\label{measure:compact:max}
    Let Assumptions \ref{ass:21:max}, \ref{k_matrix}, and \ref{ass:22:max} be in force. Then the following sets are sequentially compact:
    \begin{align*}
        \{(\nu^\Gamma,\{\mu_i^\Gamma\}_{i\in \lbrs{d_3}})\}_{\Gamma\in \calG_{(ii)}}.
    \end{align*}
\end{proposition}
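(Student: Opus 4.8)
The plan follows the usual scheme — extract weakly convergent subsequences of $\{\nu^n\}$ and of each $\{\mu^n_i\}$, pass to the limit in the constraint \eqref{eq:lem32:max}, and invoke Lemma \ref{lem:exi:max} to realize the limit as an admissible control — with one new ingredient, a uniform ``technical bound'' on the occupation measures. Fix an arbitrary sequence $(\nu^n,\{\mu^n_i\}_{i\in\lbrs{d_3}})$ in the family, with $\Gamma_n\in\calG_{(ii)}$ the associated controls. For tightness of $\{\nu^n\}$: by Remark \ref{rem:23}, $\int_{\X}\Vfour(x)\,\nu^n(dx)\le k_1/k_2$ for all $n$, and since $\Vfour(x)\to\infty$ as $|x|\to\infty$ its sublevel sets are compact in $\X=\R^d_+$, so Markov's inequality gives tightness; by Prokhorov's theorem, along a subsequence $\nu^n\to\nu$ weakly with $\nu\in\calP(\X)$.

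The heart of the argument is the tightness of $\{\mu^n_i\}$. Besides the uniform mass bound $\sup_n\mu^n_i(\X\times\G)<\infty$ from \eqref{Y:finite}, I would establish the technical bound $\sup_{\Gamma\in\calG_{(ii)}}\sum_{i=1}^{d_3}\int_{\X\times\G}(-\calB_i\Vfour)(x,y)\,\mu^\Gamma_i(dx,dy)<\infty$. This is obtained exactly as in the proof of Lemma \ref{ass:near}: take $h_n\in\calC^2_b(\X)$ with $0\le h_n\le\Vfour$ and $h_n=\Vfour$ on $\{|x|\le2n\}$, let $\tau_n=\inf\{t:|X^\Gamma_t|\ge n\}\wedge1$, and apply optional sampling to $M^{\Gamma,h_n}$ at $\tau_n$. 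Because in Case (ii) $\Phi\equiv0$, $\phi\equiv0$ and the control acts only downward, $|X^\Gamma_{\tau_n}|\le n$ and the stationary identity contains no term of the wrong sign — this is exactly the obstruction pointed out in Remark \ref{rem:diffusion} — so the Lyapunov estimate $\calA\Vfour\le k_1-k_2\Vfour$ on $\{|x|\le n\}$ together with $h_n\ge0$ gives a bound independent of $n$, and letting $n\to\infty$ with monotone convergence and \eqref{Vfour} yields the technical bound. Now, on the support of each $\mu^\Gamma_i$ one has $0\le\gamma(x)y\le x$ componentwise, and $(B_4)$ then forces $y_i\le|x|/\bar\gamma$ there; an elementary estimate of $\Vfour(x)-\Vfour(x-\gamma(x)y)$ shows $-\calB_i\Vfour(x,y)\ge c\sum_{j}x_j^{4r-1}$ on that support. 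Hence $\sup_n\sum_i\int_{\X\times\G}\sum_{j}x_j^{4r-1}\,\mu^n_i(dx,dy)<\infty$, and since $\sum_j x_j^{4r-1}\to\infty$ as $|x|\to\infty$ while $|y|\le\sqrt{d_3}\,|x|/\bar\gamma$ on the support, the families $\{\mu^n_i\}$ are tight in $\calM(\X\times\G)$. By Prokhorov and a diagonal extraction, along a further subsequence $\mu^n_i\to\mu_i$ weakly for every $i$.

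For the identification: since every $h\in\calC^2_b(\X)$ has derivatives of compact support, $\calA h$ and $\calB_i h$ are bounded continuous on $\X$ and on $\X\times\G$ respectively, so we may pass to the limit in $\int_{\X}\calA h\,d\nu^n+\sum_i\int_{\X\times\G}\calB_i h\,d\mu^n_i=0$ to obtain the same identity for $(\nu,\{\mu_i\})$. By Lemma \ref{lem:exi:max} there exist a process $X$ and a control $\Gamma'$ with $\nu^{\Gamma'}=\nu$ and $\mu^{\Gamma'}_i=\mu_i$; to conclude $\Gamma'\in\calG_{(ii)}$ one verifies Condition (d) of Definition \ref{def21:max}, namely $\int_{\X}\calA\Vfour\,d\nu\ge0$, by approximating $\Vfour$ with truncations $\Vfour_N\in\calC^2_b(\X)$, using the weak convergence at fixed $N$ and $\int_{\X}\calA\Vfour\,d\nu^n\ge0$ (valid as $\Gamma_n\in\calG_{(ii)}$), and letting $N\to\infty$ with the uniform bound $\int_{\X}\Vfour\,d\nu^n\le k_1/k_2$ controlling the tails. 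Thus $(\nu,\{\mu_i\})$ belongs to the family, proving sequential compactness.

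I expect the technical bound together with the lower estimate of $-\calB_i\Vfour$ to be the main obstacle: one must choose the test functions and carry out the estimate so that the bound is uniform over all of $\calG_{(ii)}$ and, when combined with the $(B_4)$-support constraint, strong enough to control the control-increment mass (i.e.\ tightness in the $y$-variable); this is exactly where the structural hypothesis $\Phi\equiv0$ is indispensable. A secondary, routine difficulty is the limit passage in Condition (d), where $\calA\Vfour$ is unbounded and is handled by truncation.
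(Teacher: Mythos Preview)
Your proposal is correct and follows the same overall scheme as the paper: tightness, extraction of weak limits, passage to the limit in \eqref{eq:lem32:max}, realization via Lemma \ref{lem:exi:max}, and verification of Condition (d). The tightness of $\{\nu^n\}$ and the identification step match the paper verbatim.

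The genuine difference is in the tightness of $\{\mu^n_i\}$. The paper uses \emph{quadratic} test functions $h_{n,j}(x)=x_j^2$ on $\{|x|\le 2n\}$, computes $\calB_i h_{n,j}$ explicitly, and obtains directly $\sup_{\Gamma}\sum_i\int_{\X\times\G}(|x|+|y|)\,\mu_i^\Gamma(dx,dy)<\infty$, so Markov's inequality gives tightness in one stroke. You instead push the Lyapunov function $\Vfour$ itself through the martingale, obtain the bound $\sup_\Gamma\sum_i\int(-\calB_i\Vfour)\,d\mu_i^\Gamma<\infty$, and then combine the pointwise estimate $-\calB_i\Vfour(x,y)\ge \bar\gamma\sum_j x_j^{4r-1}$ with the support constraint $|y|\le C|x|$ to conclude. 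Both routes work; the paper's quadratic choice is simpler and yields a first moment in $(x,y)$ directly, while your Lyapunov choice recycles the machinery of Remark \ref{rem:23} and Lemma \ref{ass:near} and gives a higher moment in $x$ but requires the separate support argument for $y$.

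One caveat on Condition (d): your plan to truncate $\Vfour$ and control the error via $\int_\X\Vfour\,d\nu^n\le k_1/k_2$ does not quite work as stated, because $|\calA\Vfour(x)|$ can grow like $|x|^{K_r+r-1}$, which for $r>1$ dominates $\Vfour(x)\sim|x|^{K_r}$. The paper sidesteps this by approximating $-\calA\Vfour$ itself (not $\Vfour$) from below by bounded continuous functions $A_m\nearrow-\calA\Vfour$ and using monotone convergence; this is legitimate because the Lyapunov inequality gives the one-sided bound $-\calA\Vfour\ge -k_1$, which is all that Fatou needs. This is a routine fix, as you anticipated.
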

\begin{proof}[Proof of Proposition \ref{measure:compact:max}]
  
We show the tightness of measures and then show that the limiting measures (along a converging subsequence) are associated with an admissible control. Recall that in Case (ii), $d_2 = 0$; hence, in this case $i\in\lbrs{d_2+d_3}=\lbrs{d_3}$.

\vspace{5pt}
\noindent{{\it Step 1: Tightness.} Notice that by Remark \ref{rem:23}, we have
\begin{align*}
\sup_{\Gamma\in\calG_{(ii)}}\EE[\Vfour(X^\Gamma_0)]\leq \frac{k_1}{k_2}<\iy.
\end{align*}
By the construction of $\Vfour$, we have $|x|\sim o(\Vfour)$, and therefore
\begin{align*}
\sup_{\Gamma\in\calG_{(ii)}}\EE[|X^\Gamma_0|]<\iy.
\end{align*}
Next, set
\begin{align*}
    f(R):=\sup_{\Gamma\in \calG_{(ii)}}\frac{\EE[|X^\Gamma_0|]}{R}.
\end{align*}
So, $f(R)\to0$ as $R\to\iy$.
In particular, this implies by Markov inequality,
\begin{align*}
    \nu^\Gamma(\{x: |x|>R\}) = \PP\big(|X^{\Gamma}_0|>R\big) \leq f(R).
\end{align*}
This implies the tightness of $\{\nu^\Gamma\}_{\Gamma\in\calG_{(ii)}}$. 

The proof that $\{\{\mu^\Gamma_i\}_{i\in\lbrs{d_3}}\}_{\Gamma\in\calG_{(ii)}}$ is tight is more demanding as the measures $\mu_i$ are over $\X\times \G$, and therefore, the uniform bound on the expectation of $|X|$ cannot be directly used.

Fix an arbitrary $i\in\lbrs{d_3}$. It is sufficient to find a function $q:\R_+\to \R$, such that $\lim_{R\to\iy}q(R) = 0$, and for all $n$,
\begin{align}\label{eq:q}
    \mu_i^n(C_R) \leq q(R),
\end{align}
where $\X\times\G\supseteq C_R := \{(x,y):|x|+|y| > R\}$. Recall the definition of $\mu_i^n$ in Lemma \ref{lem:exo:max} and \eqref{Gamma}. Then, 
\begin{align*}
    \mu_i^n(C_R)&=\EE\big[\Gamma_i^n(C_R\times[0,1])\big].
\end{align*}
As in the proof of Lemma \ref{ass:near}, define the stopping times $\tau_n:= \inf\{t\geq 0, |X_t^\Gamma|\geq n\}\wedge 1$. Also, take functions $h_{n,j}\in\calC^2_b(\X)$, such that for $|x|\leq 2n$, we have $h_{n,j}(x) = x_j^2$. This is the square of the projection onto the $j$-th coordinate inside the ball $\{x:|x|\le 2n\}$. For $y_i>0$, we have 
\begin{align*}
    \calB_ih_{n,j}(x,y) = \frac{(x_j-\gamma_j(x)y)^2-x_j^2}{y_i} = \frac{-2x_j\gamma_j(x)y+(\gamma_j(x)y)^2}{y_i}.
\end{align*}
Then we have
\begin{align}\label{Gxy:ine}
    \frac 12 {\bar\gamma} x_j +\frac 12 {\bar\gamma}^2|y|_1\leq \frac 12(\gamma_{j,i}(x)x_j + \gamma_{j,i}(x)\gamma_{j}(x)y)\leq \gamma_{j,i}(x)x_j,
\end{align}
where $|\cdot|$ is the $l_1$-norm, the first inequality is because $\gamma(x)$ is bounded below by ${\bar\gamma}$, and we have the second inequality without affecting the calculation, because $X\geq 0$, and therefore $x_j \geq \gamma_{j}(x)y\geq 0$; otherwise $\Gamma$ has no mass.
Further, we have
\begin{align*}
    \gamma_{j,i}(x)x_j\leq \frac{x_j\gamma_{j}(x)y}{y_i} \leq \frac{x_j\gamma_j(x)y+(x_j\gamma_{j}(x)y-(\gamma_j(x)y)^2)}{y_i} = -\calB_ih_{n,j}(x,y),
\end{align*}
where again we use $x_j \geq \gamma_{j}(x)y\geq 0$. For $y_i = 0$, we have
\begin{align*}
    \calB_ih_{n,j}(x,y) = \gamma_i^T(x)\cdot\nabla h_{n,j}(x) = -2\gamma_{j,i}(x)x_j.
\end{align*}
Then by \eqref{Gxy:ine} we have
\begin{align*}
    \frac 12 {\bar\gamma} x_j +\frac 12 {\bar\gamma}^2|y|_1\leq -\calB_ih_{n,j}(x,y).
\end{align*}
Then, we have
\begin{align*}
&\sum_{i=1}^{d_3}\EE\Big[\frac12\int_{\X\times\G\times[0,\tau_n]}\big({\bar\gamma} x_j+{\bar\gamma}^2|y|_1\big)\Gamma_i(dx,dy,ds)\Big]\\ 
&\qquad\leq \EE\Big[-\sum_{i=1}^{d_3}\int_{\X\times\G\times[0,\tau_n]}\calB_ih_{n,j}(x,y)\Gamma_i(dx,dy,ds)\Big] \\
&\qquad= \EE\Big[M_{\tau_n}^{\Gamma, h_{n}} - X_{j,\tau_n}^2+\int_0^{\tau_n} \big(2X_{j,s}\beta_j(X_s)+a_{j,j}(X_s) \big)ds\Big]\\
&\qquad\leq\EE\Big[M_{\tau_n}^{\Gamma, h_{n}} +\int_0^{\tau_n} \big(2X_{j,s}\beta_j(X_s)+a_{j,j}(X_s)\big)ds\Big] \\
&\qquad\leq \EE\big[\big|M_0^{\Gamma, h_{n}}\big|\big]  + \EE\Big[\int_0^{\tau_n} |2X_{j,s}\beta_j(X_s)+a_{j,j}(X_s)| ds\Big]\\
&\qquad\leq \EE[|X_{j,0}^2|] + \EE\Big[\int_0^1 |2X_{j,s}\beta_j(X_s)+a_{j,j}(X_s)| ds\Big] \\
&\qquad=\int_{\X} \big(|x_j|^2+|2x_j\beta_j(x)+a_{j,j}(x)|\big)\nu^\Gamma(dx).
\end{align*}
Now take $n \to \iy$, and use the monotone convergence theorem to get
\begin{align*}
    \sum_{i=1}^{d_3}\EE\Big[\frac12\int_{\X\times\G\times[0,1]}\big({\bar\gamma} x_j+{\bar\gamma}^2|y|_1\big)\Gamma_i(dx,dy,ds)\Big] \leq \int_{\X} \big(|x_j|^2+|2x_j\beta_j(x)+a_{j,j}(x)|\big)\nu^\Gamma(dx).
\end{align*}
Sum up over $j$:
\begin{align*}
    \sum_{i=1}^{d_3}\EE\Big[\frac12\int_{\X\times\G\times[0,1]}\big({\bar\gamma}|x|_1+d{\bar\gamma}^2|y|_1\big)\Gamma_i(dx,dy,ds)\Big] \leq \int_{\X} \sum_{j=1}^{d}\big(|x_j|^2+|2x_j\beta_j(x)+a_{j,j}(x)|\big)\nu^\Gamma(dx),
\end{align*}
where $d$ in $d{\bar\gamma}^2$ is the dimension of $\X$.
Note that $\sum_{j=1}^{d}\big(|x_j|^2+|2x_j\beta_j(x)+a_{j,j}(x)|\big)\sim o(\Vfour)$. By the equivalence between the $l_1$ and $l_2$ norms, we get
\begin{align*}
    \sup_{\Gamma \in \calG_{(ii)}} \sum_{i=1}^{d_3}\int_{\X\times \G} \big(|x|+|y|\big)\mu_i^\Gamma(dx,dy) < \iy.
\end{align*}
Using generalized Markov's inequality for measures we get
\begin{align*}
    \sup_{\Gamma \in \calG_{(ii)}}\mu_i(\{|x|+|y|\geq R\}) \leq \frac{\sup_{\Gamma \in \calG_{(ii)}} \sum_{i=1}^{d_3}\int_{\X\times \G} \big(|x|+|y|\big)\mu_i^\Gamma(dx,dy)}{R}.
\end{align*}
Set the right-hand side to be $q(R)$, which indeed $\to 0$ as $R\to\iy$ establishes the tightness of $\{\mu^\Gamma_i\}_i$.

\vspace{5pt}

\noindent{{\it Step 2: Limiting control and its admissibility}.} Consider an arbitrary  sequence of measures $\{\nu^n,\{\mu^n_i\}_i)\}_n\subset \{(\nu^\Gamma,\{\mu^\Gamma_i\}_i)\}_{\Gamma\in\calG_{(ii)}}$. By the tightness established earlier, we may go along a subsequence, which we again label by $\{n\}$, and obtain a limit point $(\nu^n,\{\mu^n_i\}_i)\to (\nu,\{\mu_i\}_i)$
in the weak convergence sense. Hence,
\begin{align*}
  &\int_{{\X}}\calA h(x)\nu(dx)+\sum_{i=1}^{d_3}\int_{{\X}\times\G}(\calB_i h)(x,y)\mu_{i}(dx,dy)\\
  &\quad=\lim_{n\to\iy}\int_{\X}\calA h(x)\nu^n(dx)+\sum_{i=1}^{d_3}\int_{{\X\times\G}}(\calB_i h)(x,y)\mu_{i}^n(dx,dy)\\
  &\quad=0. 
\end{align*}
Next, by Lemma \ref{lem:exi:max}, 
there exist random measures $\Gamma=\{\Gamma_i\}_i$, satisfying Conditions (a)-(c)in Definition \ref{def21:max} and such that $(\nu,\{\mu_i\}_i)= (\nu^\Gamma,\{\mu^\Gamma_i\}_i)$. 
We are left to show that Condition (d) in Definition \ref{def21:max} holds.

Consider the controls $\Gamma^n$ and the measures $\nu^n$ defined earlier in the proof. Since $\Gamma^n \in \calG_{(ii)}$, Condition (d) holds for $\nu^n$, we have
\begin{align*}
    0\leq \int_\X\calA \Vfour(x) \nu^n(dx).
\end{align*}
Recall Remark \ref{remark21}, we have
\begin{align*}
    \calA \Vfour(x)-k_1\leq -k_2\Vfour(x) \leq 0,
\end{align*}
or
\begin{align*}
    -\calA \Vfour(x) \geq -k_1.
\end{align*}
This lower bound for $-\calA \Vfour$ enables us to apply the monotone convergence theorem when we construct a sequence $A_m$ below. 
Now, set a sequence of functions $A_m$ approximating $-\calA \Vfour(x)$, where for each $m$,
\begin{itemize}
    \item $A_m(x) = -\calA \Vfour(x)$ for $|x| \leq R_m$;
    \item $A_m(x)$ is continuous and bounded, and the lower bound is uniform for all $m$;
    \item $A_m(x)\nearrow -\calA \Vfour(x)$, pointwise, as $m \to \iy$;
\end{itemize}
where we take an arbitrary sequence $R_m \nearrow \iy$. This limit is necessary, otherwise, we cannot have  $A_m(x)\nearrow -\calA \Vfour(x)$ since the latter is not bounded while the $A_m$'s are. By construction, we have for all $m, n$,
\begin{align*}
    \int_{\X} A_m(x) \nu^n(dx) \leq \int_{\X} -\calA \Vfour(x)\nu^n(dx) \leq 0. 
\end{align*}
Further, since each $A_m$ is continuous and bounded, by weak convergence
\begin{align*}
     \int_{\X} A_m(x) \nu^n(dx) \to \int_{\X} A_m(x) \nu(dx), \quad \text{as} \quad n \to \iy,
\end{align*}
for any fixed $m$. Therefore,
\begin{align*}
    \int_{\X} A_m(x) \nu(dx) = \lim_{n\to\iy}\int_{\X} A_m(x) \nu^n(dx)\leq 0,
\end{align*}
for any fixed $m$. Apply the monotone convergence theorem, we have,
\begin{align*}
    \int_{\X} A_m(x) \nu(dx) \nearrow  \int_{\X} -\calA \Vfour(x)\nu(dx),\quad \text{as} \quad m \to \iy.
\end{align*}
As a result,
\begin{align*}
    \int_{\X} -\calA \Vfour(x)\nu(dx) = \lim_{m\to\iy}\int_{\X} A_m(x) \nu(dx)\leq 0,
\end{align*}
which implies Condition (d) in Definition \ref{def21:max}.
}\end{proof}

We are now ready to prove the main theorem for the single-agent control problem. 
\subsection{Proof of Theorem \ref{thm:main:max}}\label{sec:35:max}
The finiteness of the value follows from Lemma \ref{ass:near}. 
Let $\{\Gamma_n\}_n$ be a sequence of admissible controls such that $\int_{\X} \ell(x)\nu^{\Gamma^n}(dx)+\int_{\X\times \G}g(x)\cdot\mu^{\Gamma^n}(dx,dy)$ converges to $U_{(ii)}$, according to the relevant case, as $n \to \iy$. By Proposition \ref{measure:compact:max}, by going to a subsequence if necessary, there exist measures $\nu$ and $\mu$, associated with some admissible control $\Gamma$, and $\nu^{\Gamma^n}\to\nu^\Gamma$ and $\mu^{\Gamma^n}\to\mu^\Gamma$ in the sense of weak convergence. 

Since both $\ell$ and $g$ are continuous and bounded by Assumption \ref{ass:22:max} $(B_5), (B_6)$, and weak convergence, we get
\begin{align*}
    &\int_{\X} \ell(x)\nu^\Gamma(dx) + \int_{\X\times \G}g(x)\cdot\mu^\Gamma(dx,dy) \\&=\lim_{n\to\iy}\int_{\X} \ell(x)\nu^{\Gamma^n}(dx) + \int_{\X\times \G}g(x)\cdot\mu^{\Gamma^n}(dx,dy)\\&= U_{(ii)}.
\end{align*}

\qed

\section{The mean field game}\label{sec4}


We now consider the MFG in the stationary multidimensional setup with singular controls. 
\red{The results of single-agent control problems have several applications in the context of MFGs. For instance, in biological harvesting models, the reward-maximization framework can be adapted to MFGs to model sustainable population dynamics, where multiple agents (representing different harvesting strategies) interact to optimize resource extraction while maintaining ecological balance. Similarly, in queueing systems under heavy traffic, the cost-minimizing framework for singular controls with reflection boundaries is directly applicable to MFGs modeling congestion and delays, where agents (e.g., customers or servers) optimize their strategies to minimize system-wide costs associated with waiting times and idle periods. Potentially, the theory can also be extended to financial and investment models, where investors optimize their portfolios or consumption strategies, considering the market's overall dynamics and accounting for transaction costs or market frictions. Supply chain and manufacturing systems also benefit from this framework, with singular controls modeling production shutdowns or inventory constraints, while environmental resource management models can address collective efforts like pollution reduction or sustainable resource usage, optimizing individual contributions to a common goal. In all these cases, the MFG framework allows for the study of equilibrium strategies in complex systems where individual actions influence both local and global outcomes, providing insights into optimal behavior under various constraints and interactions.}

\red{We now turn to the analysis.} The existence of an optimal control for the single-agent problem, \red{and the sequential compactness of measures in Propositions \ref{measure:compact:min} and \ref{measure:compact:max}} are pivotal in the current analysis. However, given these results, the analysis in this section is the same for both the (harvesting-inspired) maximization and (queueing-inspired) minimization problems, except for a minor point that we point out in Footnote \ref{footnote} below. Hence, we only present the maximization setup, namely, Case (ii).

The MFG equilibrium is defined as the fixed point of a best response mapping. We consider the same type of dynamics and keep the same definition for admissible controls. We use the notation $\calG$ for the set of all admissible controls. Saying this, we allow the reward to depend on a probability measure in addition to the underlying state. 
This probability measure captures the behavior of the mean field term (which in turn mimics the limiting behavior in large population games). We formulate the reward using the reward structure from \eqref{210a:max}. Specifically, for an admissible control $\Gamma\in\calG_{(ii)}$, and a probability measure $\tilde\nu\in\calP(\X)$, consider the {\it reward function in linear programming form} 
\begin{align}\label{cost:MFG}
J(\Gamma,\tilde\nu):=\int_{\X} \ell(x,\tilde\nu)\nu^\Gamma(dx)+ \int_{\X\times \G}g(x,\tilde \nu)\cdot\mu^\Gamma(dx,dy),
\end{align}
where $\nu^\Gamma$ and $\mu^\Gamma$ are the measures associated with the control $\Gamma$ as described in  Lemma \ref{lem:exo:max}. 
Hence, the measures denoted as $\mu$ in the above and following contexts actually represent $\mu$ and $\lambda$. Such notation is consistent with our single-agent result. The measure $\tilde \nu$ inside the reward function is the mean field term. 
Fixing $\tilde \nu\in\calP(\X)$, we find the optimal control $\Gamma$ under this criterion \eqref{cost:MFG}. Then, we are looking for the fixed point in the sense that $\tilde \nu = \nu^{\Gamma}$. 
We will show the existence of such a fixed point by applying Kakutani--Glicksberg--Fan fixed point theorem. 
Assumptions \ref{ass:21:max} and \ref{k_matrix} are in force throughout this section, and we extend Assumption \ref{ass:22:max} to the MFG framework in a minimal manner by imposing the same conditions as in the single-agent model, along with Lipschitz continuity in the measure-component, a standard requirement in the literature.
\begin{assumption}\label{ass:MFG}
\begin{enumerate}
    \item[($C_5$)] 
    For all $\nu$, the mapping $\ell(\cdot,\nu): \X \mapsto \R$, and $g(\cdot,\nu): \X \mapsto \R_+$ satisfy Assumption \ref{ass:22:max}.
\item[($C_6$)]
Uniformly in all $x \in \X$, the mapping $\ell(x,\cdot), \calP(\X) \mapsto \R$, and $g(x,\cdot), \calP(\X) \mapsto \R_+$ are globally Lipschitz in Prokhorov distance, with possibly different Lipschitz constants.
\end{enumerate}
\end{assumption}

In \cite[Assumption 5]{Roxana2021}, the authors provide a specific form of the running reward dependence on the measure, namely, they take into account the integral of an auxiliary continuous and bounded function against the measures. We do not specify the form, but we do require in $(C_6)$ that it is Lipschitz. Notice that in our state dynamic, the coefficients do not depend on the measures, this is similar to their Assumption $5(5)(a)$. They also require in Assumption $5(4)$ that the initial distribution is square integrable, and in our case, it is taken care of by the Lyapunov function.

Define the sets\footnote{\red{In the definition of $\mathbb A'$ we require $\int_\X\calA\Vfour(x)\nu(dx)\geq 0$. For the minimization problem (Case (i), this requirement is not necessary. In addition, for the minimization problem, the indices for $\mu$ should be changed to $i\in\lbrs{d_2+d_3}$ since we are merging  $\Phi$ and $\Gamma$ as described in Remark \ref{rem:mu:la}.\label{footnote}}} 
\begin{align*}
    \mathbb A &:= \left\{\nu\in\calP({\X}): \exists \Gamma \in \calG;\forall t\in[0,1], \calL(X^\Gamma_t) = \nu, \right\}, 
\\
\mathbb A’ &:= \Big\{\nu\in\calP({\X}): \exists\{\mu_i\}_{i\in\lbrs{d_3}}\subset\calM({\X}\times\G),\;\forall h\in \calC_b^2({\X}),\\
&\hspace{3cm}\int_{{\X}}\calA h(x)\nu(dx)+\sum_{i=1}^{d_3}\int_{{\X}\times\G}(\calB_i h)(x,y)\mu_i(dx,dy)=0, \int_\X\calA\Vfour(x)\nu(dx)\geq 0 \Big\}.
\end{align*}
These sets are not empty as they contain the stationary distribution implied by the trivial control $\Gamma(\X\times\G\times[0,1])= 0$.
From Lemma \ref{lem:exo:max}, we have $\mathbb A \subseteq \mathbb A’$, and from Lemma \ref{lem:exi:max}, we get that $\mathbb A’ \subseteq \mathbb A$. Therefore, $\mathbb A = \mathbb A’$, and we will use both sets interchangeably.

For a measure $\tilde \nu\in\calP({\X})$, we set:
\begin{align*}
    \calG(\tilde\nu) := \sup_{\nu\in\mathbb A,\;\mu \in \calI(\nu)}\Big\{\int_{\X} \ell(x,\tilde\nu)\nu(dx)+ \int_{\X\times \G}g(x,\tilde \nu)\cdot\mu(dx,dy)\Big\},
\end{align*}
where for $\nu \in \mathbb A$, $\calI(\nu) \subseteq (\calM({\X}\times\G))^{d_3}$ is given by
\begin{align*}
    \calI(\nu) &:= \Big\{\{\mu_i\}_{i\in\lbrs{d_3}}\subset\calM({\X}\times\G): \forall h\in \calC_b^2({\X}),\\
    &\hspace{5cm}\int_{{\X}}\calA h(x)\nu(dx)+\sum_{i=1}^{d_3}\int_{{\X}\times\G}(\calB_i h)(x,y)\mu_i(dx,dy)=0  \Big\}.
\end{align*}
The set $\calI(\nu)$ represents the set of measures $\mu=\{\mu_i\}_{i\in\lbrs{d_3}}$ for which $(\nu,\mu)$ satisfy the condition of Lemma \ref{lem:exi:max}, and $\calK(\tilde \nu)$ represents the optimum one may achieve assuming the mean field term is $\tilde \nu$. 
\begin{definition}
A probability measure $\hat\nu \in \mathbb A$ is called an {\rm MFG} equilibrium if there exists $\{\hat\mu_i\}_{i\in\lbrs{d_3}}\in\calI(\hat\nu)$, such that, 
\begin{align*}
    \int_{\X} \ell(x,\hat\nu)\hat\nu(dx)+ \int_{\X\times \G}g(x,\hat \nu)\cdot\hat\mu(dx,dy)  = \calK(\hat\nu).
\end{align*}

\end{definition}
\begin{theorem}\label{exi:LP}
    Under Assumptions \ref{ass:21:max}, and \ref{ass:MFG}, there exists an MFG equilibrium.
\end{theorem}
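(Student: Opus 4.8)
The plan is to recast the MFG equilibrium as a fixed point of a best-response correspondence on the set $\mathbb{A}$ of attainable stationary distributions, and to obtain that fixed point from the Kakutani--Glicksberg--Fan theorem, with Berge's maximum theorem supplying the structural properties of the correspondence. First I would package the feasible pairs into the set $\calS:=\{(\nu,\{\mu_i\}_{i\in\lbrs{d_3}}):\nu\in\mathbb{A},\ \{\mu_i\}\in\calI(\nu)\}\subset\calP(\X)\times(\calM(\X\times\G))^{d_3}$, topologized by the product of the weak topologies. By Lemmas \ref{lem:exo:max} and \ref{lem:exi:max} this coincides with $\{(\nu^\Gamma,\{\mu_i^\Gamma\}_i):\Gamma\in\calG_{(ii)}\}$, so Proposition \ref{measure:compact:max} gives that $\calS$ is (sequentially) compact; and since the constraints defining $\calS$ — the identities in the definition of $\mathbb{A}'=\mathbb{A}$ together with $\int_\X\calA\Vfour\,d\nu\ge0$ — are all affine in $(\nu,\mu)$, $\calS$ is convex. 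Its image $\mathbb{A}$ under the continuous projection $(\nu,\mu)\mapsto\nu$ is then a nonempty convex compact subset of $\calP(\X)$, which will serve as the domain of the fixed point map.

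The second step is to check that the reward functional $\Psi\big((\nu,\mu),\tilde\nu\big):=\int_\X\ell(x,\tilde\nu)\,\nu(dx)+\int_{\X\times\G}g(x,\tilde\nu)\cdot\mu(dx,dy)$ is jointly continuous on $\calS\times\mathbb{A}$. For convergent sequences $(\nu^n,\mu^n)\to(\nu,\mu)$ and $\tilde\nu^n\to\tilde\nu$, I would split each term, bounding $\big|\int(\ell(x,\tilde\nu^n)-\ell(x,\tilde\nu))\,\nu^n(dx)\big|$ by a Lipschitz constant times the Prokhorov distance $d_{\mathrm P}(\tilde\nu^n,\tilde\nu)$ using $(C_6)$, and passing $\int\ell(x,\tilde\nu)\,\nu^n(dx)\to\int\ell(x,\tilde\nu)\,\nu(dx)$ by weak convergence since $\ell(\cdot,\tilde\nu)$ is bounded and continuous by $(C_5)$; the $g$-term is handled identically, additionally using the uniform bound on the total masses $\mu^n(\X\times\G)$ from \eqref{Y:finite}.

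With joint continuity in hand, I would invoke Berge's maximum theorem for the \emph{constant} constraint correspondence $\tilde\nu\mapsto\calS$ (constant, hence continuous, with nonempty compact values): this yields that $\tilde\nu\mapsto\calK(\tilde\nu)$ is continuous and that the maximizer correspondence $\calR(\tilde\nu):=\argmax_{(\nu,\mu)\in\calS}\Psi\big((\nu,\mu),\tilde\nu\big)$ is nonempty, compact-valued and upper hemicontinuous; since $\Psi(\cdot,\tilde\nu)$ is affine and $\calS$ convex, $\calR(\tilde\nu)$ is also convex. Projecting, $\Theta(\tilde\nu):=\{\nu:\exists\{\mu_i\},\ (\nu,\{\mu_i\})\in\calR(\tilde\nu)\}$ maps $\mathbb{A}$ into $\mathbb{A}$ with nonempty convex compact values and, being the composition of an upper hemicontinuous compact-valued correspondence with a continuous map, has a closed graph. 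Viewing $\calP(\X)$ inside the locally convex space of finite signed measures with the weak topology, $\mathbb{A}$ is nonempty convex compact and $\Theta$ is a Kakutani map, so Kakutani--Glicksberg--Fan gives $\hat\nu\in\Theta(\hat\nu)$; unwinding the definition produces $\{\hat\mu_i\}\in\calI(\hat\nu)$ with $\int_\X\ell(x,\hat\nu)\hat\nu(dx)+\int_{\X\times\G}g(x,\hat\nu)\cdot\hat\mu(dx,dy)=\calK(\hat\nu)$, i.e. an MFG equilibrium, and Lemma \ref{lem:exi:max} realizes $(\hat\nu,\hat\mu)$ by an admissible control $\hat\Gamma\in\calG_{(ii)}$ with $\nu^{\hat\Gamma}=\hat\nu$.

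I expect the main obstacle to be the verification that $\Theta$ is a bona fide Kakutani map — concretely, the closed-graph/upper-hemicontinuity property, which hinges on the joint continuity of $\Psi$ together with the compactness of $\calS$; the truly delicate ingredient there, namely that the unbounded Lyapunov constraint $\int_\X\calA\Vfour\,d\nu\ge0$ is preserved under weak limits, is already absorbed into Proposition \ref{measure:compact:max}, so once that proposition is available the remaining work is the (routine but necessary) bookkeeping of affineness, nonemptiness and continuity. The argument for Case (i) is identical, using Proposition \ref{measure:compact:min}, the merged index set of Remark \ref{rem:mu:la}, and omitting the Lyapunov constraint as indicated in Footnote \ref{footnote}.
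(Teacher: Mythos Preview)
Your proposal is correct and follows essentially the same route as the paper: compactness and convexity of the feasible set $\calS$ (the paper's $\mathbb V$) from Proposition \ref{measure:compact:max}, joint continuity of the reward via $(C_5)$, $(C_6)$ and the mass bound \eqref{Y:finite}, Berge's maximum theorem, and Kakutani--Glicksberg--Fan. The only cosmetic difference is that the paper applies the fixed point theorem directly on $\mathbb V$ by padding $\Psi$ with a dummy argument $\mu_1$ so that the map $\Psi^*:\mathbb V\to 2^{\mathbb V}$ is a self-map, whereas you project the argmax correspondence down to $\mathbb A\subset\calP(\X)$ and apply the fixed point theorem there; both packagings are valid and rely on the same ingredients.
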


The existence of an MFG equilibrium uses Kakutani--Glicksberg--Fan fixed point theorem. For this, we define a proper set-valued mapping satisfying several continuity, convexity, and compactness properties. In what follows, we provide the necessary building blocks for the fixed point argument.

Define the set 
\begin{align*}
\mathbb V := \{(\nu,\mu):\nu\in\mathbb A, \mu \in \calI(\nu)\}.
\end{align*}
On the set $\mathbb V$, we take the product metric, i.e.,
\begin{align}\label{MFG:Prok}
    d_{\mathbb V}((\nu_1,\mu_1),(\nu_2,\mu_2)) := \sup\{d_P(\nu_1,\nu_2),d_P(\mu_1,\mu_2)\},
\end{align}
where $d_P$ is the Prokhorov metric.

While the Prokhorov's distance between two probability measures is well-used, we need to formulate the proper notion of this metric for $\calM({\X}\times\G)$. Note that the space $\X \times \G$ is separable metric spaces. Hence, by \cite{MR94838}, they are metrizable. In particular, such a metric corresponds to the weak topology, i.e., $\mu_n \to \mu$ if and only if
\begin{align*}
    \int hd\mu_n \to \int hd\mu,
\end{align*}
for all bounded continuous functions $h$. We denote this specific metric by $d_P(\cdot,\cdot)$, and refer to it as the {\it generalized Prokhorov metric}.

\begin{lemma}
    The set $\mathbb V$ is compact and convex.
\end{lemma}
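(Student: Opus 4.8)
The plan is to treat convexity and compactness separately; the compactness part is, in essence, a restatement of Proposition~\ref{measure:compact:max} in the language of the metric space $(\mathbb{V},d_{\mathbb V})$, together with the identification $\mathbb A=\mathbb A'$ recorded above. For convexity, I would observe that every constraint defining membership in $\mathbb V$ is affine in the pair $(\nu,\mu)$: writing $\nu\in\mathbb A=\mathbb A'$, the defining relations are the family of linear equalities $\int_{\X}\calA h(x)\nu(dx)+\sum_{i=1}^{d_3}\int_{\X\times\G}(\calB_i h)(x,y)\mu_i(dx,dy)=0$, one for each $h\in\calC_b^2(\X)$, the linear inequality $\int_{\X}\calA\Vfour(x)\nu(dx)\ge 0$, and the requirements $\nu\in\calP(\X)$, $\mu_i\in\calM(\X\times\G)$, all of which describe convex sets. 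Hence for $(\nu_1,\mu_1),(\nu_2,\mu_2)\in\mathbb V$ and $\lambda\in[0,1]$, the pair $\lambda(\nu_1,\mu_1)+(1-\lambda)(\nu_2,\mu_2)$ satisfies each of these constraints, obtained by taking the same convex combination of the two valid relations, so it lies in $\mathbb V$; non-emptiness is already noted via the trivial control.

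For compactness, since $(\mathbb V,d_{\mathbb V})$ is a metric space it suffices to prove sequential compactness. Let $\{(\nu^n,\{\mu^n_i\}_i)\}_n\subset\mathbb V$. Because $\nu^n\in\mathbb A=\mathbb A'$, the Lyapunov inequality $\int_{\X}\calA\Vfour(x)\nu^n(dx)\ge 0$ holds, so by Lemma~\ref{lem:exi:max} there is an admissible control $\Gamma^n\in\calG_{(ii)}$ with $\nu^n=\nu^{\Gamma^n}$ and, by~\eqref{eq:mu-gamma:max}, $\mu^n_i=\mu^{\Gamma^n}_i$. Thus the sequence lies in $\{(\nu^\Gamma,\{\mu^\Gamma_i\}_i)\}_{\Gamma\in\calG_{(ii)}}$, and Proposition~\ref{measure:compact:max} furnishes a subsequence that converges, in the weak convergence of the $\nu$-component and of each $\mu_i$-component (which is precisely convergence in $d_{\mathbb V}$), to a limit $(\nu,\{\mu_i\}_i)$ that is again associated with some $\Gamma\in\calG_{(ii)}$. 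By Lemma~\ref{lem:exo:max}, $(\nu,\mu)=(\nu^\Gamma,\mu^\Gamma)$ satisfies the linear relation, so $\nu\in\mathbb A$ and $\mu\in\calI(\nu)$, i.e.\ $(\nu,\mu)\in\mathbb V$. Hence $\mathbb V$ is sequentially compact, and therefore compact.

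All the genuine work has already been carried out in Proposition~\ref{measure:compact:max}, namely the tightness of $\{\mu^\Gamma_i\}$ on the non-compact space $\X\times\G$ (via the Lyapunov bound~\eqref{Y:finite}) and the transfer of the constraint $\int_{\X}\calA\Vfour\,d\nu\ge 0$ to the weak limit. The only points requiring care in the present lemma are therefore bookkeeping: first, that the generalized Prokhorov metric $d_P$ on $\calM(\X\times\G)$ metrizes exactly the weak convergence used in Proposition~\ref{measure:compact:max} (the cited metrizability statement, with total masses controlled by testing against the constant function $1$); and second, that membership in $\mathbb V$ is stable both under the convex combinations above and under weak limits, which follows from Lemmas~\ref{lem:exo:max}--\ref{lem:exi:max} and the equality $\mathbb A=\mathbb A'$. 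I expect no essential obstacle beyond confirming that these identifications are legitimate.
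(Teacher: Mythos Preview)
Your proposal is correct and follows essentially the same approach as the paper: convexity via the affine nature of the defining constraints, and compactness by reducing to sequential compactness and invoking Proposition~\ref{measure:compact:max} together with the identification $\mathbb A=\mathbb A'$. If anything, your write-up is slightly more explicit than the paper's (you spell out the role of Lemmas~\ref{lem:exo:max}--\ref{lem:exi:max} and the metrization of weak convergence), whereas the paper simply cites Proposition~\ref{measure:compact:max} twice and leaves the convexity verification as ``easy to verify''.
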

\begin{proof}

    Note that $\mathbb V$ is metrizable. Hence, it is sufficient to consider sequential compactness. Take an arbitrary sequence $\{(\nu^n,\mu^n)\}_n\subset\mathbb V$. By Proposition \ref{measure:compact:max}, by going to a subsequence $\nu^{n_k}$ if necessary, there exists $\nu^\Gamma$ associated with some admissible $\Gamma$ such that $\nu^{n_k} \to \nu^\Gamma$ in the sense of weak convergence. Now consider the sequence $\mu^{n_k}$, again by Proposition \ref{measure:compact:max}, by going to a subsequence $\mu^{n_{k_j}}$ if necessary, there exists $\mu^\Gamma$ associated with some admissible $\Gamma$ such that $\mu^{n_{k_j}} \to \mu^\Gamma$ in the sense of weak convergence. By the construction of metric \eqref{MFG:Prok}, $(\nu^{n_{k_j}},\mu^{n_{k_j}})$ is a converging subsequence of the original sequence, further, its limit point $(\nu^\Gamma,\mu^\Gamma)\in\mathbb V$ as the measures are associated with some control $\Gamma$.
    
    For convexity, for any $(\nu_1,\mu_1),(\nu_2,\mu_2) \in \mathbb V$ and any $\lambda\in[0,1]$, let $\nu’ = \lambda\nu_1+(1-\lambda)\nu_2$ and similarly for $\mu’$. Then it’s easy to verify that $\nu’ \in \mathbb A$ with $\mu’\in\calI(\nu’)$, so $(\nu’,\mu’)\in\mathbb V$.
\end{proof}

Consider the function $\Psi(\cdot,\cdot): \mathbb V\times\mathbb V \mapsto \R$, \begin{align*}
\Psi(\nu_1,\mu_1,\nu_2,\mu_2) := 
\int_{\X} \ell(x,\nu_1)\nu_2(dx)+ \int_{\X\times \G}g(x,\nu_1)\mu_2(dx,dy).
\end{align*}
Notice that the function does not depend on the variable $\mu_1$; nevertheless, we include it here for technical reasons.

\begin{lemma}
The mapping $\Psi$ is continuous.
\end{lemma}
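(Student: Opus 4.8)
The plan is to show that if $(\nu_1^n,\mu_1^n,\nu_2^n,\mu_2^n)\to(\nu_1,\mu_1,\nu_2,\mu_2)$ in $\mathbb V\times\mathbb V$ (equivalently, $\nu_1^n\To\nu_1$, $\mu_1^n\To\mu_1$, $\nu_2^n\To\nu_2$, $\mu_2^n\To\mu_2$ weakly), then $\Psi(\nu_1^n,\mu_1^n,\nu_2^n,\mu_2^n)\to\Psi(\nu_1,\mu_1,\nu_2,\mu_2)$. Since $\Psi$ is a sum of two terms of the same structure, it suffices to handle the term $\int_{\X}\ell(x,\nu_1^n)\nu_2^n(dx)$; the $g$-term is identical in form. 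I would split the increment via the triangle inequality:
\begin{align*}
\Big|\int_\X \ell(x,\nu_1^n)\nu_2^n(dx)-\int_\X \ell(x,\nu_1)\nu_2(dx)\Big|
&\le \int_\X |\ell(x,\nu_1^n)-\ell(x,\nu_1)|\,\nu_2^n(dx)\\
&\quad+\Big|\int_\X \ell(x,\nu_1)\nu_2^n(dx)-\int_\X \ell(x,\nu_1)\nu_2(dx)\Big|.
\end{align*}
For the first term, use Assumption $(C_6)$: $|\ell(x,\nu_1^n)-\ell(x,\nu_1)|\le L_\ell\, d_P(\nu_1^n,\nu_1)$ uniformly in $x$, so that term is bounded by $L_\ell\,d_P(\nu_1^n,\nu_1)\,\nu_2^n(\X)=L_\ell\,d_P(\nu_1^n,\nu_1)\to 0$ since $\nu_2^n$ are probability measures and $d_P(\nu_1^n,\nu_1)\to 0$. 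For the second term, $x\mapsto\ell(x,\nu_1)$ is continuous and bounded by $(C_5)$ together with $(B_5),(B_6)$, so weak convergence $\nu_2^n\To\nu_2$ gives convergence to $0$ directly.

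For the $g$-term I would argue the same way, with one extra wrinkle: the integration is over $\X\times\G$ and the $\mu_2^n$ are finite measures, not probability measures, so I need a uniform bound $\sup_n\mu_2^n(\X\times\G)<\infty$ to run the analogue of the first-term estimate. This is exactly what Proposition \ref{measure:compact:max} (and its proof, in particular \eqref{Y:finite}) supplies: all $\mu$-components associated with admissible controls have uniformly bounded total mass, hence so do the limits in $\mathbb V$; since $\{\mu_2^n\}$ lies in the sequentially compact set $\calI(\cdot)$-components, $\sup_n \mu_2^n(\X\times\G)\le C$ for some finite $C$. Then $\int_{\X\times\G}|g(x,\nu_1^n)-g(x,\nu_1)|\,\mu_2^n(dx,dy)\le L_g\, d_P(\nu_1^n,\nu_1)\, C\to 0$, and $\int_{\X\times\G}g(x,\nu_1)\,\mu_2^n(dx,dy)\to \int_{\X\times\G}g(x,\nu_1)\,\mu_2(dx,dy)$ by weak convergence of $\mu_2^n$ against the bounded continuous function $(x,y)\mapsto g(x,\nu_1)$ (which does not depend on $y$).

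The main (and essentially only) obstacle is bookkeeping: making sure that weak convergence in the generalized Prokhorov metric on $\calM(\X\times\G)$ genuinely gives $\int h\,d\mu_2^n\to\int h\,d\mu_2$ for bounded continuous $h$ — which the excerpt has already set up by construction of $d_P$ — and that the total masses of the $\mu$'s are uniformly controlled, which is handed to us by Proposition \ref{measure:compact:max}. Everything else is the standard "$\varepsilon/2$" decomposition of a product-type functional into a coefficient-perturbation piece (controlled by the Lipschitz assumption $(C_6)$) and a measure-perturbation piece (controlled by weak convergence against a fixed bounded continuous integrand). No compactness of $\X$ or $\G$ is needed, since boundedness of $\ell,g$ makes weak convergence alone sufficient.
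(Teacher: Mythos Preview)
Your proposal is correct and follows essentially the same approach as the paper: the same triangle-inequality split into a coefficient-perturbation piece handled by the Lipschitz assumption $(C_6)$ and a measure-perturbation piece handled by weak convergence against a bounded continuous integrand, with the uniform total-mass bound on the $\mu$'s supplied by \eqref{Y:finite}. The paper only differs cosmetically by writing out the $d_3$ components of the $g$-term explicitly via a $\max_{i\in\lbrs{d_3}}$.
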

\begin{proof}
It is sufficient to show sequential continuity because $\mathbb V$ is metrizable. Take a sequence $(\nu^1_n,\mu^1_n,\nu^2_n,\mu^2_n)\to(\nu^1,\mu^1,\nu^2,\mu^2) \in\mathbb V\times \mathbb V$.
Then,

\begin{align*}
    &|\Psi(\nu^1_n,\mu^1_n,\nu^2_n,\mu^2_n)-\Psi(\nu^1,\mu^1,\nu^2,\mu^2)|\\&\quad\leq\Big|\int_{\X} \ell(x,\nu^1_n)\nu^2_n(dx) - \int_{\X} \ell(x,\nu^1)\nu^2_n(dx)\Big|+\Big|\int_{\X} \ell(x,\nu^1)\nu^2_n(dx)-\int_{\X} \ell(x,\nu^1)\nu^2(dx)\Big|\\&\qquad+\Big|\int_{\X\times \G}g(x,\nu_n^1)\mu_n^2(dx,dy)-\int_{\X\times \G}g(x,\nu^1)\mu_n^2(dx,dy)\Big|\\&\qquad+\Big|\int_{\X\times \G}g(x,\nu^1)\mu_n^2(dx,dy)-\int_{\X\times \G}g(x,\nu^1)\mu^2(dx,dy)\Big|\\
    &\quad\leq L_\ell\cdot d_P(\nu^1_n,\nu^1)+\Big|\int_{\X} \ell(x,\nu^1)\nu^2_n(dx) - \int_{\X} \ell(x,\nu^1)\nu^2(dx)\Big|\\
    &\qquad+d_3\max_{i\in\lbrs{d_3}}\Big\{L_{g_i}\cdot d_P(\nu_n^1,\nu^1)\Big\}\mu^2_{n,i}(\X\times\G)\\&\qquad+d_3\max_{i\in\lbrs{d_3}}\Big|\int_{\X\times \G}g_i(x,\nu^1)\mu_{n,i}^2(dx,dy)-\int_{\X\times \G}g_i(x,\nu^1)\mu_{i}^2(dx,dy)\Big|\\
    &\quad\to 0.
\end{align*}
The first inequality is the triangle inequality. The second inequality follows by Assumption $(C_6)$, which assumes the Lipschitz continuity of $\ell$ and $g$, where $L_\ell$ and $L_{g_i}$ are the Lipschitz constants. For the limit, the first term $\to 0$ by the convergence of the sequence in $\mathbb V$; the second term $\to 0$ by weak convergence, since by $(C_5)$, $\ell$ is continuous and bounded on $\X$; the third term $\to 0$ by the convergence of the sequence in $\mathbb V$, and the fact that
\begin{align*}
    \sup_n\mu^2_{n,i}(\X\times\G) < \iy,
\end{align*}
which is indicated by \eqref{Y:finite}; the fourth term $\to 0$ by weak convergence as well, because $g$ is also bounded and continuous.
\end{proof}

Next, we present a special case of the well-known Berges’s maximum theorem, see e.g., \cite[17.31]{MR2378491}. 
\begin{lemma}\label{Berge}
Let $E$ be a metric space, and $K$ a compact metric space, and $\Psi:E\times K \mapsto \mathbb R$ a continuous function. Then $\gam(e):=\max_{k\in K}\Psi(e,k)$ is continuous, and the following set-valued function is upper hemicontinuous and compact-valued:
\begin{align*}
    E\ni e \mapsto \argmax_{k\in K}\Psi(e,k):=\{k \in L: \Psi(e,k) = \gam(e)\} \in 2^K.
\end{align*}
\end{lemma}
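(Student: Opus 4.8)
The plan is to prove the three assertions in order — finiteness and compact-valuedness, continuity of $\gam$, and upper hemicontinuity of the $\argmax$ correspondence — working entirely with sequences, which is legitimate since $E$, $K$, and hence $E\times K$ are metric. Throughout write $S(e):=\argmax_{k\in K}\Psi(e,k)=\{k\in K:\Psi(e,k)=\gam(e)\}$.

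First, for each fixed $e\in E$ the function $\Psi(e,\cdot)$ is continuous on the compact space $K$, so it attains its supremum; hence $\gam(e)$ is finite and $S(e)\neq\emptyset$. Since $S(e)$ is the preimage of the singleton $\{\gam(e)\}$ under the continuous map $\Psi(e,\cdot):K\to\R$, it is closed in $K$, and a closed subset of the compact set $K$ is compact; this gives compact-valuedness. Next, I would show $\gam$ is continuous by proving lower and upper semicontinuity separately. Fix $e_n\to e$. For the lower bound, pick any $k^\star\in S(e)$; then $\gam(e_n)\ge\Psi(e_n,k^\star)\to\Psi(e,k^\star)=\gam(e)$ by continuity of $\Psi$, so $\liminf_n\gam(e_n)\ge\gam(e)$. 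For the upper bound, suppose to the contrary that $\limsup_n\gam(e_n)>\gam(e)$; then along some subsequence (not relabelled) $\gam(e_n)\to L>\gam(e)$. Choosing $k_n\in S(e_n)$ and using compactness of $K$ to pass to a further subsequence with $k_n\to k^\star\in K$, continuity of $\Psi$ yields $L=\lim_n\Psi(e_n,k_n)=\Psi(e,k^\star)\le\gam(e)$, a contradiction. Hence $\limsup_n\gam(e_n)\le\gam(e)$, and $\gam$ is continuous.

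Finally, for upper hemicontinuity I would use that, since $S$ takes values in the fixed compact set $K$, upper hemicontinuity is equivalent to $S$ having a closed graph (see \cite[Ch.~17]{MR2378491}), and verify the latter: given $(e_n,k_n)\to(e,k)$ in $E\times K$ with $k_n\in S(e_n)$, i.e.\ $\Psi(e_n,k_n)=\gam(e_n)$, letting $n\to\iy$ and invoking continuity of $\Psi$ and of $\gam$ gives $\Psi(e,k)=\lim_n\Psi(e_n,k_n)=\lim_n\gam(e_n)=\gam(e)$, so $k\in S(e)$ and the graph is closed. The only points requiring any care are the standard contradiction argument reducing the upper-semicontinuity claim from a convergent subsequence to the full sequence, and the appeal to the closed-graph characterization of upper hemicontinuity, which holds precisely because the codomain $K$ is compact; both are classical and I would cite rather than reprove them.
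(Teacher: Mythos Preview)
Your proof is correct and self-contained. The paper, however, does not prove this lemma at all: it simply states it as a special case of Berge's maximum theorem and cites \cite[17.31]{MR2378491}. So your approach differs from the paper's only in that you actually supply the argument rather than deferring to the literature; what you gain is independence from an external reference, at the cost of a paragraph of routine work the authors chose to omit.
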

To apply this lemma to our case, define the set-valued function $\Psi^*:\mathbb V \mapsto 2^{\mathbb V}$ by
\begin{align*}
    \Psi^*(\nu_1,\mu_1) = \argmax_{(\nu_2,\mu_2) \in \mathbb V} \Psi(\nu_1,\mu_1,\nu_2,\mu_2).
\end{align*}
The $\argmax$ is well-defined since by fixing $e = (\nu_1, \mu_1)$, we may use Assumption \ref{ass:MFG} 
$(C_5)$ to recover Assumption \ref{ass:22:max}. Hence, Theorem \ref{thm:main:max} guarantees that the optimum exists.

From the above lemma, we get that $\Psi^*$ is upper hemicontinuous and compact-valued.
The function $\Psi$ is linear in $(\nu_2,\mu_2)$ and therefore the function $\Psi^*$ is convex-valued in the sense that for each $(\nu_1,\mu_1)\in\mathbb V$, $\Psi^*(\nu_1,\mu_1)$ is a convex set. 
\begin{definition}
Let $E$ and $K$ be two topological vector spaces, and $\Psi^*: E\mapsto 2^K$ be a set-valued function. If $K$ is convex, $\Psi^*$ is upper hemicontinuous, and for all $e\in E$, $\Psi^*(e)$ is non-empty, compact, and convex, then $\Psi^*$ is called a {\rm Kakutani map}.
\end{definition}
We see that our function $\Psi^*$ is a Kakutani map. We will then use the following fixed point theorem provided in \cite[II,7.8.6]{MR1987179}.
\begin{lemma}(Kakutani--Glicksberg--Fan theorem)
Let $C$ be a compact convex subset of a locally convex space $E$, and let $S : C \mapsto 2^C$ be a Kakutani map. Then, $S$ has a fixed point; namely, there is $c\in C$, such that $c\in S(c)$. 
\end{lemma}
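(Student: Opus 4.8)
The plan is to prove this abstract fixed-point theorem by reducing it to the finite-dimensional Kakutani theorem (itself a consequence of Brouwer's theorem), using a Schauder-type projection to approximate $C$ by finite-dimensional compact convex subsets, and then passing to a limit along a net rather than a sequence, since the compact set $C$ need not be metrizable in a general (Hausdorff) locally convex space $E$. Throughout I would exploit that, because $C$ is compact and each value $S(y)$ is compact, upper hemicontinuity of $S$ is equivalent to $S$ having a closed graph in $C \times C$; this closed-graph property is exactly what lets one pass the membership relation to the limit at the end.

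First I would carry out the finite-dimensional reduction. Fix a base $\mathcal{U}$ of convex, balanced neighborhoods of $0$ in $E$, directed by reverse inclusion. Given $U \in \mathcal{U}$, compactness of $C$ yields finitely many points $a_1,\dots,a_m \in C$ with $C \subseteq \bigcup_{i=1}^m (a_i + U)$. Set $F_U := \mathrm{conv}\{a_1,\dots,a_m\} \subseteq C$, a compact convex subset of a finite-dimensional subspace. Choosing a continuous partition of unity $\{\beta_i\}_{i=1}^m$ subordinate to this cover, I would define the projection
\[
 p_U(x) := \sum_{i=1}^m \beta_i(x)\, a_i, \qquad x \in C,
\]
which is continuous and maps $C$ into $F_U$. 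Since $\beta_i(x) > 0$ forces $a_i - x \in U$ and $U$ is convex, it follows that $p_U(x) - x = \sum_{i=1}^m \beta_i(x)(a_i - x) \in U$ for every $x \in C$. Next I would push $S$ down to $F_U$ by defining $S_U : F_U \to 2^{F_U}$, $S_U(y) := \mathrm{conv}\big(p_U(S(y))\big)$. For each $y$, $S(y)$ is nonempty compact convex, so $p_U(S(y))$ is nonempty compact and its convex hull is a nonempty compact convex subset of $F_U$ by Carathéodory; one checks $S_U$ inherits upper hemicontinuity from that of $S$ together with continuity of $p_U$. Applying the finite-dimensional Kakutani theorem to $S_U$ on $F_U$ produces $y_U \in F_U$ with $y_U \in S_U(y_U)$, i.e. $y_U = \sum_j \lambda_j\, p_U(z_j)$ for finitely many $z_j \in S(y_U)$ with $\lambda_j \ge 0$ and $\sum_j \lambda_j = 1$. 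Setting $w_U := \sum_j \lambda_j z_j$, convexity of $S(y_U)$ gives $w_U \in S(y_U)$, while
\[
 y_U - w_U = \sum_j \lambda_j\big(p_U(z_j) - z_j\big) \in U
\]
by convexity of $U$. Thus for each $U$ I obtain $y_U \in C$ and $w_U \in S(y_U)$ with $y_U - w_U \in U$.

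Finally I would pass to the limit. The net $(y_U)_{U \in \mathcal{U}}$ lies in the compact set $C$, so it admits a convergent subnet $y_U \to x^\ast \in C$. Because $y_U - w_U \in U$ and $U$ shrinks to $0$ along $\mathcal{U}$, the matching subnet of $(w_U)$ also converges to $x^\ast$. Since $w_U \in S(y_U)$ for every $U$ and $S$ has closed graph, the limit satisfies $x^\ast \in S(x^\ast)$, giving the desired fixed point.

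The hard part will be the infinite-dimensional-to-finite reduction: constructing the Schauder projection $p_U$ with the approximation property $p_U(x) - x \in U$, and verifying that the convex-hull map $S_U$ retains nonempty compact convex values and upper hemicontinuity so that the finite-dimensional Kakutani theorem genuinely applies. A second, more delicate point is that $C$ carries no metric in general, so the limiting step must be run with nets (and subnets) and must rely on the closed-graph characterization of upper hemicontinuity — valid here precisely because $C$ is compact Hausdorff and the values $S(y)$ are compact — rather than on any sequential argument.
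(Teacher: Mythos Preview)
Your proof is correct and follows the standard route to the Kakutani--Glicksberg--Fan theorem: a Schauder-type finite-dimensional approximation via partitions of unity, application of the finite-dimensional Kakutani theorem on each approximating polytope, and a net-compactness/closed-graph argument to extract the fixed point. The delicate points you flag (verifying that $S_U=\mathrm{conv}\!\circ\, p_U\circ S$ remains upper hemicontinuous with compact convex values, and that the limit must be taken along subnets rather than sequences) are exactly the ones that need care, and your treatment of them is sound.

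That said, there is nothing to compare: the paper does not prove this lemma. It is quoted verbatim as a classical result, with a reference to Granas--Dugundji, \emph{Fixed Point Theory}, and then applied as a black box in the proof of Theorem~\ref{exi:LP}. So your proposal goes well beyond what the paper itself provides for this statement.
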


\begin{proof}[Proof of Theorem \ref{exi:LP}]
The set $\mathbb V$ is a compact and convex subset of $\calP({\X})\times\calM({{\X}\times\G})$, which is clearly convex, and the map $\Psi^*$ is a Kakutani map. Therefore, there exists a fixed point of the map $\Psi^*$; call it $({\hat \nu},{\hat\mu})$. Furthermore, for any $\nu_1\in\calP(\X)$ and $\mu_1\in\calI(\nu_1)$,

\begin{align*}
    \max_{(\nu_2,\mu_2) \in \mathbb V} \Psi(\nu_1,\mu_1,\nu_2,\mu_2) =\sup_{\nu\in\mathbb A,\;\mu \in \calI(\nu)}\Big\{\int_{\X} \ell(x,\nu_1)\nu(dx)+ \int_{\X\times \G}g(x,\nu_1)\mu(dx,dy)\Big\} = \calK(\nu_1).
\end{align*}
We have,

\begin{align*}
    \int_{\X} \ell(x,\hat\nu)\hat\nu(dx)+\int_{\X\times \G}g(x,\hat\nu)\hat\mu(dx,dy) = \Psi({\hat \nu},{\hat\mu},{\hat \nu},{\hat\mu}) = \max_{(\nu_2,\mu_2) \in \mathbb V}\Psi({\hat \nu},{\hat\mu},\nu_2,\mu_2) = \calK(\hat\nu).
\end{align*}
So $\hat\nu$ is an MFG equilibrium.

\end{proof}

\vspace{5pt}
{\bf Acknowledgment.} We thank the anonymous AE and the referees for their insightful comments, which helped us improve our paper.

\footnotesize

\bibliographystyle{abbrv} 
\bibliography{refs} 

\begin{thebibliography}{10}

\bibitem{Adlakha}
S.~Adlakha, R.~Johari, and G.~Weintraub.
\newblock Equilibria of dynamic games with many players: Existence,
  approximation, and market structure.
\newblock {\em Journal of Economic Theory}, 156, 11 2015.

\bibitem{aid2023stationary}
R.~Aid, M.~Basei, and G.~Ferrari.
\newblock A stationary mean-field equilibrium model of irreversible investment
  in a two-regime economy, 2023.

\bibitem{MR2378491}
C.~D. Aliprantis and K.~C. Border.
\newblock {\em Infinite dimensional analysis}.
\newblock Springer, Berlin, third edition, 2006.
\newblock A hitchhiker's guide.

\bibitem{e2018class}
L.~H. Alvarez~E.
\newblock A class of solvable stationary singular stochastic control problems,
  2018.

\bibitem{alv-hen2019}
L.~H. Alvarez~E. and A.~Hening.
\newblock Optimal sustainable harvesting of populations in random environments.
\newblock {\em Stochastic Process. Appl.}, 2019.

\bibitem{Anahtarci2019ValueIA}
B.~Anahtarci, C.~D. Kariksiz, and N.~Saldi.
\newblock Value iteration algorithm for mean-field games.
\newblock {\em Syst. Control. Lett.}, 143:104744, 2019.

\bibitem{Ari17}
A.~Arapostathis, A.~Biswas, and J.~Carroll.
\newblock On solutions of mean field games with ergodic cost.
\newblock {\em J. Math. Pures Appl. (9)}, 107(2):205--251, 2017.

\bibitem{Pang15}
A.~Arapostathis, A.~Biswas, and G.~Pang.
\newblock Ergodic control of multi-class {$M/M/N+M$} queues in the
  {H}alfin-{W}hitt regime.
\newblock {\em Ann. Appl. Probab.}, 25(6):3511--3570, 2015.

\bibitem{Ari_book}
A.~Arapostathis, V.~S. Borkar, and M.~K. Ghosh.
\newblock {\em Ergodic Control of Diffusion Processes}.
\newblock Encyclopedia of Mathematics and its Applications. Cambridge
  University Press, 2011.

\bibitem{Pang19}
A.~Arapostathis, H.~Hmedi, G.~Pang, and N.~Sandri\'{c}.
\newblock Uniform polynomial rates of convergence for a class of
  {L}\'{e}vy-driven controlled {SDE}s arising in multiclass many-server queues.
\newblock In {\em Modeling, stochastic control, optimization, and
  applications}, volume 164 of {\em IMA Vol. Math. Appl.}, pages 1--20.
  Springer, Cham, 2019.

\bibitem{Pang16}
A.~Arapostathis and G.~Pang.
\newblock Ergodic diffusion control of multiclass multi-pool networks in the
  {H}alfin-{W}hitt regime.
\newblock {\em Ann. Appl. Probab.}, 26(5):3110--3153, 2016.

\bibitem{Pang18}
A.~Arapostathis and G.~Pang.
\newblock Infinite-horizon average optimality of the {N}-network in the
  {H}alfin-{W}hitt regime.
\newblock {\em Math. Oper. Res.}, 43(3):838--866, 2018.

\bibitem{ata2023singular}
B.~Ata, J.~M. Harrison, and N.~Si.
\newblock Singular control of (reflected) brownian motion: A computational
  method suitable for queueing applications, 2023.

\bibitem{Atar-Budh-Will-07}
R.~Atar, A.~Budhiraja, and R.~J. Williams.
\newblock H{JB} equations for certain singularly controlled diffusions.
\newblock {\em Ann. Appl. Probab.}, 17(5-6):1745--1776, 2007.

\bibitem{aydin2023robustness}
U.~Aydın and N.~Saldi.
\newblock Robustness and approximation of discrete-time mean-field games under
  discounted cost criterion, 2023.

\bibitem{Bardi14}
M.~Bardi and F.~S. Priuli.
\newblock Linear-quadratic {$N$}-person and mean-field games with ergodic cost.
\newblock {\em SIAM J. Control Optim.}, 52(5):3022--3052, 2014.

\bibitem{biswas2017}
A.~Biswas.
\newblock An ergodic control problem for many-server multiclass queueing
  systems with cross-trained servers.
\newblock {\em Stoch. Syst.}, 7(2):263--288, 2017.

\bibitem{Borkar}
V.~S. Borkar.
\newblock {\em Optimal control of diffusion processes}, volume 203 of {\em
  Pitman Research Notes in Mathematics Series}.
\newblock Longman Scientific \& Technical, Harlow; copublished in the United
  States with John Wiley \& Sons, Inc., New York, 1989.

\bibitem{handBM}
A.~N. Borodin and P.~Salminen.
\newblock {\em Handbook of Brownian Motion - Facts and Formulae}.
\newblock Probability and Its Applications. Springer, 2002.

\bibitem{Roxana2020}
G.~Bouveret, R.~Dumitrescu, and P.~Tankov.
\newblock Mean-field games of optimal stopping: a relaxed solution approach.
\newblock {\em SIAM J. Control Optim.}, 58(4):1795--1821, 2020.

\bibitem{Budh-03}
A.~Budhiraja.
\newblock An ergodic control problem for constrained diffusion processes:
  existence of optimal {M}arkov control.
\newblock {\em SIAM J. Control Optim.}, 42(2):532--558, 2003.

\bibitem{bud-gho-chi2011}
A.~Budhiraja, A.~P. Ghosh, and C.~Lee.
\newblock Ergodic rate control problem for single class queueing networks.
\newblock {\em SIAM J. Control Optim.}, 49(4):1570--1606, 2011.

\bibitem{bud-ros2006}
A.~Budhiraja and K.~Ross.
\newblock Existence of optimal controls for singular control problems with
  state constraints.
\newblock {\em Ann. Appl. Probab.}, 16(4):2235--2255, 2006.

\bibitem{finite-fuel}
L.~Campi, T.~De~Angelis, M.~Ghio, and G.~Livieri.
\newblock Mean-field games of finite-fuel capacity expansion with singular
  controls.
\newblock {\em Ann. Appl. Probab.}, 32(5):3674--3717, 2022.

\bibitem{cannerozzi2024cooperation}
F.~Cannerozzi and G.~Ferrari.
\newblock Cooperation, correlation and competition in ergodic $n$-player games
  and mean-field games of singular controls: A case study, 2024.

\bibitem{HYCao}
H.~Cao, J.~Dianetti, and G.~Ferrari.
\newblock Stationary discounted and ergodic mean field games with singular
  controls.
\newblock {\em Math. Oper. Res.}, 48(4):1871--1898, 2023.

\bibitem{MFGrever}
H.~Cao and X.~Guo.
\newblock M{FG}s for partially reversible investment.
\newblock {\em Stochastic Process. Appl.}, 150:995--1014, 2022.

\bibitem{Cao-Guo-Lee}
H.~Cao, X.~Guo, and J.~S. Lee.
\newblock Approximation of {$ N $}-player stochastic games with singular
  controls by mean field games.
\newblock {\em Numer. Algebra Control Optim.}, 13(3-4):604--629, 2023.

\bibitem{car-por}
P.~Cardaliaguet and A.~Porretta.
\newblock Long time behavior of the master equation in mean-field game theory.
\newblock {\em Analysis \& PDE}, 2017.

\bibitem{car2020}
R.~Carmona.
\newblock Applications of mean field games in financial engineering and
  economic theory.
\newblock In {\em Mean field games}, volume~78 of {\em Proc. Sympos. Appl.
  Math.}, pages 165--219. Amer. Math. Soc., Providence, RI, [2021] \copyright
  2021.

\bibitem{CD1}
R.~Carmona and F.~Delarue.
\newblock {\em Probabilistic theory of mean field games with applications.
  {I}}, volume~83 of {\em Probability Theory and Stochastic Modelling}.
\newblock Springer, Cham, 2018.
\newblock Mean field FBSDEs, control, and games.

\bibitem{CD2}
R.~Carmona and F.~Delarue.
\newblock {\em Probabilistic theory of mean field games with applications.
  {II}}, volume~84 of {\em Probability Theory and Stochastic Modelling}.
\newblock Springer, Cham, 2018.
\newblock Mean field games with common noise and master equations.

\bibitem{car-lac2017}
R.~Carmona, F.~Delarue, and D.~Lacker.
\newblock Mean field games of timing and models for bank runs.
\newblock {\em Appl. Math. Optim.}, 76(1):217--260, 2017.

\bibitem{coh2019a}
A.~Cohen.
\newblock Brownian control problems for a multiclass {M}/{M}/1 queueing problem
  with model uncertainty.
\newblock {\em Math. Oper. Res.}, 44(2):739--766, 2019.

\bibitem{coh2019b}
A.~Cohen.
\newblock On singular control problems, the time-stretching method, and the
  weak-{M}1 topology.
\newblock {\em SIAM J. Control Optim.}, 59(1):50--77, 2021.

\bibitem{cohen2021optimal}
A.~Cohen, A.~Hening, and C.~Sun.
\newblock Optimal ergodic harvesting under ambiguity.
\newblock {\em SIAM J. Control Optim.}, 60(2):1039--1063, 2022.

\bibitem{CZ2022}
A.~Cohen and E.~Zell.
\newblock Analysis of the finite-state ergodic master equation.
\newblock {\em Appl. Math. Optim.}, 87(3):Paper No. 40, 53, 2023.

\bibitem{davis1990portfolio}
M.~H. Davis and A.~R. Norman.
\newblock Portfolio selection with transaction costs.
\newblock {\em Math. Oper. Res.}, 15(4):676--713, 1990.

\bibitem{unifying}
J.~Dianetti, G.~Ferrari, M.~Fischer, and M.~Nendel.
\newblock A unifying framework for submodular mean field games.
\newblock {\em Math. Oper. Res.}, 48(3):1679--1710, 2023.

\bibitem{dianetti2023ergodic}
J.~Dianetti, G.~Ferrari, and I.~Tzouanas.
\newblock Ergodic mean-field games of singular control with regime-switching
  (extended version), 2023.

\bibitem{Roxana2021}
R.~Dumitrescu, M.~Leutscher, and P.~Tankov.
\newblock Control and optimal stopping mean field games: a linear programming
  approach.
\newblock {\em Electron. J. Probab.}, 26:Paper No. 157, 49, 2021.

\bibitem{echeveria}
P.~Echeverr\'{\i}a.
\newblock A criterion for invariant measures of {M}arkov processes.
\newblock {\em Z. Wahrsch. Verw. Gebiete}, 61(1):1--16, 1982.

\bibitem{eli-hub-tur2020}
R.~Elie, E.~Hubert, and G.~Turinici.
\newblock Contact rate epidemic control of {COVID}-19: an equilibrium view.
\newblock {\em Math. Model. Nat. Phenom.}, 15:Paper No. 35, 25, 2020.

\bibitem{Feleqi13}
E.~Feleqi.
\newblock The derivation of ergodic mean field game equations for several
  populations of players.
\newblock {\em Dyn. Games Appl.}, 3(4):523--536, 2013.

\bibitem{Fu2023}
G.~Fu.
\newblock Extended mean field games with singular controls.
\newblock {\em SIAM J. Control Optim.}, 61(1):283--312, 2023.

\bibitem{Fu21}
G.~Fu and U.~Horst.
\newblock Mean field games with singular controls.
\newblock {\em SIAM J. Control Optim.}, 55(6):3833--3868, 2017.

\bibitem{diogo15}
D.~A. Gomes and H.~Mitake.
\newblock Existence for stationary mean-field games with congestion and
  quadratic {H}amiltonians.
\newblock {\em NoDEA Nonlinear Differential Equations Appl.}, 22(6):1897--1910,
  2015.

\bibitem{gra2016}
P.~J. Graber.
\newblock Linear quadratic mean field type control and mean field games with
  common noise, with application to production of an exhaustible resource.
\newblock {\em Appl. Math. Optim.}, 74(3):459--486, 2016.

\bibitem{MR1987179}
A.~Granas and J.~Dugundji.
\newblock {\em Fixed point theory}.
\newblock Springer Monographs in Mathematics. Springer-Verlag, New York, 2003.

\bibitem{NMFG19}
X.~Guo and R.~Xu.
\newblock Stochastic games for fuel follower problem: {$N$} versus mean field
  game.
\newblock {\em SIAM J. Control Optim.}, 57(1):659--692, 2019.

\bibitem{Instantaneous}
J.~M. Harrison and M.~I. Taksar.
\newblock Instantaneous control of {B}rownian motion.
\newblock {\em Math. Oper. Res.}, 8(3):439--453, 1983.

\bibitem{RBM_Harrison}
J.~M. Harrison and R.~J. Williams.
\newblock Brownian models of open queueing networks with homogeneous customer
  populations.
\newblock {\em Stochastics}, 22(2):77--115, 1987.

\bibitem{MRBM-sta}
J.~M. Harrison and R.~J. Williams.
\newblock Multidimensional reflected {B}rownian motions having exponential
  stationary distributions.
\newblock {\em Ann. Probab.}, 15(1):115--137, 1987.

\bibitem{Hau71}
U.~G. Haussmann.
\newblock Optimal stationary control with state control dependent noise.
\newblock {\em SIAM Journal on Control}, 9(2):184--198, 1971.

\bibitem{Haussmann1}
U.~G. Haussmann and W.~Suo.
\newblock Existence of singular optimal control laws for stochastic
  differential equations.
\newblock {\em Stochastics Stochastics Rep.}, 48(3-4):249--272, 1994.

\bibitem{Haussmann2}
U.~G. Haussmann and W.~Suo.
\newblock Singular optimal stochastic controls. {I}. {E}xistence.
\newblock {\em SIAM J. Control Optim.}, 33(3):916--936, 1995.

\bibitem{Haussmann3}
U.~G. Haussmann and W.~Suo.
\newblock Singular optimal stochastic controls. {II}. {D}ynamic programming.
\newblock {\em SIAM J. Control Optim.}, 33(3):937--959, 1995.

\bibitem{coexist}
A.~Hening and D.~H. Nguyen.
\newblock Coexistence and extinction for stochastic {K}olmogorov systems.
\newblock {\em Ann. Appl. Probab.}, 28(3):1893--1942, 2018.

\bibitem{HNUK19}
A.~Hening, D.~H. Nguyen, S.~C. Ungureanu, and T.~K. Wong.
\newblock Asymptotic harvesting of populations in random environments.
\newblock {\em J. Math. Biol.}, 78(1-2):293--329, 2019.

\bibitem{hen-qua2020}
A.~Hening and K.~Q. Tran.
\newblock Harvesting and seeding of stochastic populations: analysis and
  numerical approximation.
\newblock {\em J. Math. Biol.}, 81(1):65--112, 2020.

\bibitem{Huang2007}
M.~Huang, P.~E. Caines, and R.~P. Malham{\'e}.
\newblock The {N}ash certainty equivalence principle and {M}ckean-{V}lasov
  systems: {A}n invariance principle and entry adaptation.
\newblock In {\em Decision and Control, 2007 46th IEEE Conference on}, pages
  121--126. IEEE, 2007.

\bibitem{Huang2006}
M.~Huang, R.~P. Malham{\'e}, and P.~E. Caines.
\newblock Large population stochastic dynamic games: Closed-loop
  {M}c{K}ean-{V}lasov systems and the {N}ash certainty equivalence principle.
\newblock {\em Commun. Inf. Syst.}, 6(3):221--251, 2006.

\bibitem{weining}
W.~Kang and K.~Ramanan.
\newblock Characterization of stationary distributions of reflected diffusions.
\newblock {\em Ann. Appl. Probab.}, 24(4):1329--1374, 2014.

\bibitem{KSbook}
I.~Karatzas and S.~E. Shreve.
\newblock {\em Brownian motion and stochastic calculus}, volume 113 of {\em
  Graduate Texts in Mathematics}.
\newblock Springer-Verlag, New York, second edition, 1991.

\bibitem{kha2}
R.~Khasminskii.
\newblock {\em Stochastic stability of differential equations}, volume~66 of
  {\em Stochastic Modelling and Applied Probability}.
\newblock Springer, Heidelberg, second edition, 2012.
\newblock With contributions by G. N. Milstein and M. B. Nevelson.

\bibitem{klein69}
D.~Kleinman.
\newblock Optimal stationary control of linear systems with control-dependent
  noise.
\newblock {\em IEEE Transactions on Automatic Control}, 14(6):673--677, 1969.

\bibitem{kruk1}
L.~Kruk.
\newblock Optimal policies for n-dimensional singular stochastic control
  problems part i: The skorokhod problem.
\newblock {\em SIAM Journal on Control and Optimization}, 38(5):1603--1622,
  2000.

\bibitem{kruk2}
L.~Kruk.
\newblock Optimal policies for n-dimensional singular stochastic control
  problems. part ii: The radially symmetric case. ergodic control.
\newblock {\em SIAM Journal on Control and Optimization}, 39(2):635--659, 2000.

\bibitem{kurtz98}
T.~G. Kurtz and R.~H. Stockbridge.
\newblock Existence of {M}arkov controls and characterization of optimal
  {M}arkov controls.
\newblock {\em SIAM J. Control Optim.}, 36(2):609--653, 1998.

\bibitem{kur-sto}
T.~G. Kurtz and R.~H. Stockbridge.
\newblock Stationary solutions and forward equations for controlled and
  singular martingale problems.
\newblock {\em Electron. J. Probab.}, 6:no. 17, 52, 2001.

\bibitem{kurtz2017linear}
T.~G. Kurtz and R.~H. Stockbridge.
\newblock Linear programming formulations of singular stochastic control
  problems: Time-homogeneous problems, 2017.

\bibitem{MFG1}
J.-M. Lasry and P.-L. Lions.
\newblock Mean field games.
\newblock {\em Jpn. J. Math.}, 2(1):229--260, 2007.

\bibitem{Vijay}
J.~Li, B.~Xia, X.~Geng, H.~Ming, S.~Shakkottai, V.~Subramanian, and L.~Xie.
\newblock Mean field games in nudge systems for societal networks.
\newblock {\em ACM Trans. Model. Perform. Eval. Comput. Syst.}, 3(4), Aug.
  2018.

\bibitem{li-rep-sir2019}
Z.~Li, A.~M. Reppen, and R.~Sircar.
\newblock A mean field games model for cryptocurrency mining.
\newblock {\em Management Science}, 0(0):null, 0.

\bibitem{lia-zer2020}
G.~{Liang} and M.~{Zervos}.
\newblock {Ergodic singular stochastic control motivated by the optimal
  sustainable exploitation of an ecosystem}.
\newblock {\em arXiv e-prints}, page arXiv:2008.05576, Aug. 2020.

\bibitem{MFG2}
P.-L. Lions and J.-M. Lasry.
\newblock Large investor trading impacts on volatility.
\newblock In {\em Paris-{P}rinceton {L}ectures on {M}athematical {F}inance
  2004}, volume 1919 of {\em Lecture Notes in Math.}, pages 173--190. Springer,
  Berlin, 2007.

\bibitem{menaldi-robin}
J.~L. Menaldi and M.~Robin.
\newblock Singular ergodic control for multidimensional {G}aussian-{P}oisson
  processes.
\newblock {\em Stochastics}, 85(4):682--691, 2013.

\bibitem{Neumann20}
B.~A. Neumann.
\newblock Stationary equilibria of mean field games with finite state and
  action space.
\newblock {\em Dyn. Games Appl.}, 10(4):845--871, 2020.

\bibitem{RL22}
B.~Pang and Z.-P. Jiang.
\newblock Reinforcement learning for adaptive optimal stationary control of
  linear stochastic systems.
\newblock {\em IEEE Transactions on Automatic Control}, 68(4):2383--2390, 2023.

\bibitem{reppen2018singular}
M.~Reppen.
\newblock {\em Singular Control in Financial Economics}.
\newblock PhD thesis, ETH Zurich, 2018.

\bibitem{MR94838}
V.~S. Varadarajan.
\newblock Weak convergence of measures on separable metric spaces.
\newblock {\em Sankhy\={a}}, 19:15--22, 1958.

\bibitem{ref-OU}
A.~Ward and P.~Glynn.
\newblock Properties of the reflected ornstein-uhlenbeck process.
\newblock {\em Queueing Syst.}, 44:109--123, 06 2003.

\bibitem{Lei}
L.~Ying.
\newblock Stein's method for mean field approximations in light and heavy
  traffic regimes.
\newblock {\em Proc. ACM Meas. Anal. Comput. Syst.}, 1(1), June 2017.

\end{thebibliography}

\end{document}